\documentclass[12pt]{article}
\usepackage{amsthm}
\usepackage{amsmath,amsfonts,amssymb,rotating,colordvi}
\usepackage{color}
\usepackage[symbol]{footmisc}
\textheight=22cm \topmargin=-1cm
\usepackage[unicode=true,
 bookmarks=true,bookmarksnumbered=true,bookmarksopen=true,bookmarksopenlevel=2,
 breaklinks=false,pdfborder={0 0 1},backref=false,colorlinks=true]
 {hyperref}

\usepackage{tikz}

\usetikzlibrary{calc}
\usetikzlibrary{shapes.geometric}


\tikzset{
  c/.style={every coordinate/.try}
}


\newcommand{\red}{\mathrm{red}}

\usetikzlibrary{arrows,shapes,positioning}
\usetikzlibrary{decorations.markings}
\tikzstyle arrowstyle=[scale=1]
\tikzstyle directed=[postaction={decorate,decoration={markings,mark=at position 0.6 with {\arrow[arrowstyle]{stealth};}}}]
\tikzstyle reverse directed=[postaction={decorate,decoration={markings,mark=at position 0.4 with {\arrowreversed[arrowstyle]{stealth};}}}]
\tikzstyle dot=[style={circle,inner sep=1pt,fill}]


 \newtheorem{thm}{Theorem}[section]
\newtheorem{deff}[thm]{Definition}
\newtheorem{lem}[thm]{Lemma}
\newtheorem{prop}[thm]{Proposition}
\newtheorem{obs}[thm]{Observation}

\newtheorem{coro}[thm]{Corollary}
\newtheorem{conj}[thm]{Conjecture}
\newtheorem{remark}[thm]{Remark}
\numberwithin{equation}{section}

\newdimen\Squaresize \Squaresize=11pt
\newdimen\Thickness \Thickness=0.7pt
\def\Square#1{\hbox{\vrule width \Thickness
   \vbox to \Squaresize{\hrule height \Thickness\vss
    \hbox to \Squaresize{\hss#1\hss}
   \vss\hrule height\Thickness}
\unskip\vrule width \Thickness} \kern-\Thickness}

\def\Vsquare#1{\vbox{\Square{$#1$}}\kern-\Thickness}

\def\moins{\raise 1pt\hbox{{$\scriptstyle -$}}}

\parskip 6pt

\begin{document}

\begin{center}
\textbf{\large{On the 12-representability of induced subgraphs of a grid graph}}\textbf{ }
\par\end{center}

\begin{center}

Joanna~N.~Chen\footnote[2]{E-Mail: \texttt{joannachen@tjut.edu.cn}} and
Sergey~Kitaev\footnote[3]{E-Mail: \texttt{sergey.kitaev@cis.strath.ac.uk}}\footnote[1]{The corresponding author} \par \bigskip

\begin{small}
\textsuperscript{1}College of Science, Tianjin University of Technology, Tianjin 300384, P.R. China \par
\textsuperscript{2}Department of Computer and Information Sciences, \\ University of Strathclyde, Glasgow, UK \par
\bigskip
\end{small}
\end{center}

\begin{abstract}

The notion of a 12-representable graph was introduced by Jones et al.\ in \cite{JKPR15}. This notion generalizes the notions of the much studied permutation graphs and co-interval graphs. It is known that any 12-representable graph is a comparability graph, and also that a tree is 12-representable if and only if it is a double caterpillar. Moreover, Jones et al.\ initiated the study of 12-representability of  induced subgraphs of a grid graph, and asked whether it is possible to characterize such graphs. This question in \cite{JKPR15} is meant to be about  induced subgraphs of a grid graph that consist of squares, which we call square grid graphs. However, an induced subgraph in a grid graph does not have to contain entire squares, and we call such graphs line grid graphs.

In this paper we answer the question of Jones et al.\ by providing a complete characterization of $12$-representable square grid graphs in terms of forbidden induced subgraphs. Moreover, we conjecture such a characterization for the line grid graphs and give a number of results towards solving this challenging conjecture. Our results are a major step in the direction of characterization of all 12-representable graphs since beyond our characterization, we also discuss relations between graph labelings and 12-representability, one of the key open questions in the area.\\

\noindent \textbf{Keywords:} graph representation, 12-representable graph, grid graph, forbidden subgraph, square grid graph, line grid graph

\end{abstract}

\section{Introduction}

Let $\mathbb{P}=\{1,2,\ldots\}$ and $\mathbb{P}^*$ be the set of all words over $\mathbb{P}$.
Given a word $w=w_1 w_2 \cdots w_n \in \mathbb{P}$, denote by $A(w)$ the set of integers occurring
in $w$. For example, $A(315353)=\{1,3,5\}$. For $B \subset A(w)$, let $w_B$ be the word obtained from $w$
by removing all the letters in $A(w)-B$. For example, if $B=\{2,3\}$ and $w=12315251$ then $w_B=232$. Let $\red(w)$ be the word that results from
$w$ by replacing each occurrence of the $i$-th smallest letter that occurs in $w$ by $i$. For example, $\red(3729)=2314$.

Let $u=u_1u_2\cdots u_m \in \mathbb{P}^*$ with $\red(u)=u$. Then, we say that  a word $w=w_1 w_2 \cdots w_n \in \mathbb{P}^*$ contains an occurrence of the {\emph pattern} $u$ if
there exist integers $1 \leq i_1<i_2<\cdots <i_m \leq n$ such that $\red(w_{i_1} w_{i_2}\cdots w_{i_m} )=u$. For example, the word $624635$ contains two occurrences of the pattern $4231$, namely the subsequences $6452$ and $6352$. The pattern $u$ is {\em consecutive}, if in each of its occurrences $i_{t+1}-i_{t}=1$ for all $1\leq t\leq m-1$.

Given a labeled graph $G=(V,E)$ and a pattern $u$, we say that $G$ is \emph{$u$-pattern representable} if there is a word $w \in \mathbb{P}^*$ such that $A(w)=V$, and for all $x,y \in V$, $xy \notin E$ if and only if $w_{\{x,y\}}$ contains an occurrence of $u$.
In such a situation, we say that $w$ \emph{$u$-pattern represents} $G$,  and $w$ is  called a \emph{$u$-pattern-representant } of $G$. An unlabeled graph $H$ is $u$-pattern representable if it admits a labeling resulting in a $u$-pattern  representable labeled graph $H'$. We say that $H'$ \emph{realizes $u$-pattern representability} of $H$.

Requiring from $u$ to be a consecutive pattern, we obtain the notion of a {\em $u$-representable graph} introduced in \cite{JKPR15}. In this case, similarly to the above, we can define $u$-representability and $u$-representants, or just {\em representants} if $u$ is clear from the context. The class of $u$-representable graphs generalizes the much studied class of word-representable graphs \cite{K17,KL15}, which is precisely $11$-representable graphs. It was shown in \cite{K17} that if a consecutive pattern $u$ is of length at least 3 then any graph can be $u$-represented. Also, note that a word avoids the pattern 12 if and only if it avoids the consecutive pattern 12, and thus the notion of a 12-pattern representable graph is equivalent to that of a 12-representable graph, which is the subject of interest in this paper.

Jones et al.~\cite{JKPR15} showed that the notion of a 12-representable graph generalizes the notions of the much studied {\em permutation graphs} (e.g.\ see~\cite{GRR13,LNP2009} and references therein) and {\em co-interval graphs} (e.g.\ see \cite{N2011,Z2009}). Also, Jones et al.~\cite{JKPR15}  showed that any 12-representable graph is a {\em comparability graph} (i.e.\ such a graph admits a {\em transitive orientation}), and also that a tree is 12-representable if and only if it is a {\em double caterpillar} (see \cite{JKPR15} for definition). More relevant to this paper, Jones et al.~\cite{JKPR15} initiated the study of 12-representability of  induced subgraphs of a {\em grid graph}, and asked whether it is possible to characterize such graphs. Examples of a grid graph and some of its possible induced subgraphs are given in Figure~\ref{grid-graphs}  which also appears in \cite{JKPR15}. Jones et al. \cite{JKPR15} showed that corner and skew ladder graphs, and thus ladder graphs, are 12-representable, while any graph with an induced cycle of size at least 5 is not 12-representable, so for example, the first two, and the last graphs in Figure~\ref{grid-graphs} are not 12-representable.

\begin{figure}[ht]
\begin{center}
\includegraphics[scale=0.9]{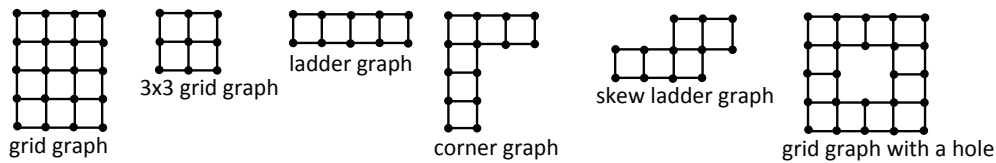}
\end{center}
\vspace{-10pt}
\caption{Induced subgraphs of a grid graph.}
\label{grid-graphs}
\end{figure}

Even though it was not stated explicitly, the concern of Jones at al.\ in \cite{JKPR15} was induced subgraphs of a grid graph that consist of a number of squares, which we call {\em square grid graphs}. In Section~\ref{sec:square} we provide a complete characterization in terms of forbidden induced subgraphs of 12-representable square grid graphs (see Theorem~\ref{thm:F=12re}). However, induced subgraphs of a grid graph may also contain edges (called by us ``lines'') that do not belong to any squares, for example, as in the graph in Figure~\ref{fig:induceG}. We refer to such subgraphs as {\em line grid graphs} (not to be confused with taking the line graph operation!) and think of the set of square grid graphs be disjoint with the set of line grid graphs. In Section~\ref{sec:generalgrid} we give a number of results on 12-representation of line grid graphs and state a conjecture on the complete characterization related to 12-representability in this case (see Conjecture~\ref{conj:linegrid2}).

\begin{figure}[!htbp]
\begin{center}
\begin{tikzpicture}[line width=0.7pt,scale=0.7]
\coordinate (O) at (0,0);
\draw [thick] (O)--++(0,1)--++(0,1)--++(1,0);
\draw [thick] (O)--++(1,0)++(1,0)--++(2,0)--++(0,-1)--++(-1,0)--++(0,1)
++(0,-1)--++(-1,0)--++(0,1)++(-1,0)--++(0,-1)--++(-1,0)--++(0,1);
\draw [thick] (O)++(3,0)--++(0,1)++(0,1)--++(0,1);
\draw [dotted, thick] (O)++(3,1)--++(0,1);
\draw [dotted, thick] (O)++(1,0)--++(1,0);
\draw [dotted, thick] (O)++(1,-1)--++(1,0);
\fill[black!100] (O) circle(0.5ex) ++(1,0) circle(0.5ex)
 ++(1,0) circle(0.5ex) ++(1,0) circle(0.5ex) ++(1,0) circle(0.5ex)
 ++(0,-1) circle(0.5ex) ++(-1,0) circle(0.5ex)++(-1,0) circle(0.5ex)
 ++(-1,0) circle(0.5ex)++(-1,0) circle(0.5ex);

 \fill[black!100] (O)++(3,1)circle(0.5ex) ++(0,1)circle(0.5ex)
 ++(0,1)circle(0.5ex);
 \fill[black!100] (O)++(0,1)circle(0.5ex) ++(0,1)circle(0.5ex)
 ++(1,0)circle(0.5ex);
\end{tikzpicture}
\caption{An example of a line grid graph}
\label{fig:induceG}
\end{center}
\end{figure}
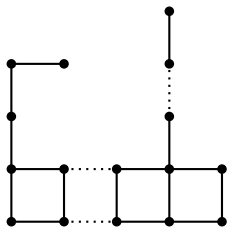

We conclude the introduction by reviewing some basic definitions and results given in \cite{JKPR15}, which will be used frequently in our paper.

A graph $H=(V',E')$ is an induced subgraph of $G=(V,E)$ if $V' \subset V$ and for all
$x,y \in V'$, $xy \in E'$ if and only if $xy \in E$. Also, similarly to the definition of $\red(w)$ for a word $w$, the reduced form $\red(H)$ of $H$ is obtained from the graph $H$ by replacing the $i$-th smallest label by $i$.

\begin{obs}[\cite{JKPR15}] \label{obs:induced}
If G is $12$-representable and $H=(V',E')$ is an induced subgraph of $G$, then $H$ is $12$-representable.
\end{obs}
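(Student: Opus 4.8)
The plan is to start from a word that 12-represents $G$ and simply restrict it to the vertex set of $H$. Concretely, since $G$ is 12-representable, fix a labeling of $G$ and a word $w\in\mathbb{P}^*$ with $A(w)=V$ such that, for all $x,y\in V$, $xy\notin E$ if and only if $w_{\{x,y\}}$ contains an occurrence of the pattern $12$. Set $w'=w_{V'}$, the word obtained from $w$ by deleting every letter not in $V'$. Then $A(w')=V'$, so $w'$ is a candidate representant for $H$ under the labeling inherited from $G$.

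The key step is to verify the defining equivalence for $w'$. For any two vertices $x,y\in V'$, the word $(w')_{\{x,y\}}=(w_{V'})_{\{x,y\}}$ equals $w_{\{x,y\}}$, because $\{x,y\}\subseteq V'$ and successively deleting letters outside $V'$ and then outside $\{x,y\}$ is the same as deleting all letters outside $\{x,y\}$ at once. Now use that $H$ is an \emph{induced} subgraph: $xy\in E'$ if and only if $xy\in E$. Combining these two facts, $xy\notin E'$ iff $xy\notin E$ iff $w_{\{x,y\}}$ contains an occurrence of $12$ iff $(w')_{\{x,y\}}$ contains an occurrence of $12$. Hence $w'$ $12$-represents the labeled graph $H$ (with the inherited labeling).

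Finally, to conclude $12$-representability of $H$ as an unlabeled graph, note that pattern containment is invariant under the operation $\red$: applying $\red$ to $w'$ yields a word over $\{1,\dots,|V'|\}$ that $12$-represents $\red(H)$, so $H$ admits a labeling realizing its $12$-representability. I do not anticipate a genuine obstacle here; the only point requiring a little care is the commutation of the two restriction operations on words, together with the explicit use of the induced-subgraph hypothesis to transfer the edge/non-edge condition without change.

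\begin{proof}
Since $G$ is $12$-representable, fix a labeling of $G=(V,E)$ and a word $w\in\mathbb{P}^*$ with $A(w)=V$ such that for all $x,y\in V$ we have $xy\notin E$ if and only if $w_{\{x,y\}}$ contains an occurrence of the pattern $12$. Let $w'=w_{V'}$; then $A(w')=V'$. For any $x,y\in V'$, since $\{x,y\}\subseteq V'$ we have $(w')_{\{x,y\}}=(w_{V'})_{\{x,y\}}=w_{\{x,y\}}$. Because $H$ is an induced subgraph of $G$, $xy\in E'$ if and only if $xy\in E$. Therefore $xy\notin E'$ if and only if $xy\notin E$, if and only if $w_{\{x,y\}}$ contains an occurrence of $12$, if and only if $(w')_{\{x,y\}}$ contains an occurrence of $12$. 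Hence $w'$ $12$-represents the labeled graph $H$, and consequently $\red(w')$ $12$-represents $\red(H)$, so $H$ is $12$-representable.
\end{proof}
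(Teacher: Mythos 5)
Your proof is correct and is exactly the standard restriction argument behind this observation, which the paper states without proof (citing \cite{JKPR15}) and later refers to as the ``hereditary nature'' of $12$-representation. Restricting the representant to $V'$, noting $(w_{V'})_{\{x,y\}}=w_{\{x,y\}}$, using the induced-subgraph hypothesis, and passing to $\red$ for the unlabeled statement is precisely the intended proof.
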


\begin{thm}[\cite{JKPR15}]\label{at-most-twice-occur}
For a labeled $12$-representable graph $G$, there exists a word-representant $w$ in which each letter occurs at most twice. Also, $G$ can be represented by a permutation if and only if $G$ is a permutation graph.
\end{thm}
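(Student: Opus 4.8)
The plan is to establish the two parts separately. For the first part, the idea is to take an arbitrary word-representant of $G$ and throw away all but the first and last occurrence of each letter, and then argue that this operation does not change the represented graph. For the second part, the idea is to observe that a word-representant in which every letter occurs exactly once is nothing but the one-line notation of (the inverse of) the permutation whose inversion graph is $G$.

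So, for the first part, let $w$ 12-represent a labeled graph $G=(V,E)$, so that $A(w)=V$. For a letter $a\in V$ write $f(a)$ and $\ell(a)$ for the positions in $w$ of its first and last occurrences, with $f(a)=\ell(a)$ if $a$ occurs once. Let $w'$ be the subword of $w$ obtained by deleting, for every letter $a$, all occurrences of $a$ other than those in positions $f(a)$ and $\ell(a)$. Then $A(w')=V$ and each letter occurs once or twice in $w'$, so it suffices to check that $w'$ still 12-represents $G$, i.e.\ that for all $x,y\in V$ the word $w'_{\{x,y\}}$ contains an occurrence of $12$ exactly when $w_{\{x,y\}}$ does. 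Assume $x<y$. Since $w_{\{x,y\}}$ uses only the values $x<y$, it contains $12$ if and only if some occurrence of $x$ precedes some occurrence of $y$, that is, if and only if $f(x)<\ell(y)$; the identical statement holds for $w'$. Because the occurrences at positions $f(x)$ and $\ell(y)$ are retained in $w'$ and passing to a subword preserves the left-to-right order of what remains, the inequality ``$f(x)<\ell(y)$'' has the same truth value whether measured in $w$ or in $w'$. Hence $w'$ 12-represents $G$, with each of its letters occurring at most twice.

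For the second part, recall that, after relabeling the vertices by $\{1,\dots,n\}$, a permutation graph is precisely the inversion graph of some permutation $\sigma$ of $\{1,\dots,n\}$, namely the graph on $\{1,\dots,n\}$ in which, for $x<y$, $xy\in E$ if and only if $\sigma(x)>\sigma(y)$. Suppose $G$ is 12-represented by a word $w$ that is a permutation, i.e.\ each of the $n=|V|$ letters occurs exactly once; identify $V$ with $\{1,\dots,n\}$ and let $\sigma(a)$ be the position of $a$ in $w$. For $x<y$, the two-letter word $w_{\{x,y\}}$ avoids $12$ if and only if $y$ precedes $x$ in $w$, i.e.\ $\sigma(y)<\sigma(x)$; by the definition of a 12-representant this says $xy\in E$ if and only if $\sigma(x)>\sigma(y)$, so $G$ is the inversion graph of $\sigma$ and hence a permutation graph. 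Conversely, given a permutation graph, fix a labeling by $\{1,\dots,n\}$ and a permutation $\sigma$ with $xy\in E\iff\sigma(x)>\sigma(y)$ for $x<y$, and let $w$ be the word of length $n$ whose letter in position $p$ is $\sigma^{-1}(p)$. Then each letter occurs once, $A(w)=V$, the position of $a$ in $w$ equals $\sigma(a)$, and the computation just made (read in reverse) shows that $w$ 12-represents $G$; thus $G$ is represented by a permutation.

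None of this is deep. The only place where a little care is needed is the verification in the first part that passing from $w$ to $w'$ preserves \emph{every} edge and non-edge simultaneously, i.e.\ the projection computation above, including the degenerate case $f(a)=\ell(a)$ of a letter occurring once; and in the second part the only subtlety is making sure the conventions for ``permutation graph'' and ``12-representant'' are aligned, which amounts to the observation that the word encoding $\sigma^{-1}$ in one-line notation is exactly the permutation-word that represents the inversion graph of $\sigma$.
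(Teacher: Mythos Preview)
Your argument is correct. Note, however, that this theorem is not proved in the present paper at all: it is quoted verbatim from \cite{JKPR15} as background, so there is no ``paper's own proof'' to compare against here. That said, your proof is exactly the standard one (and essentially the one given in \cite{JKPR15}): for the first part, the key observation that the presence of the pattern $12$ in $w_{\{x,y\}}$ for $x<y$ is governed solely by whether $f(x)<\ell(y)$, and hence is unaffected by deleting intermediate occurrences, is precisely the right reduction; for the second part, the identification of a permutation word-representant with the one-line notation of $\sigma^{-1}$ realizing $G$ as the inversion graph of $\sigma$ is again the expected move. Nothing is missing.
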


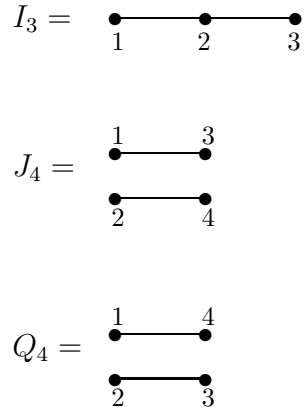
\begin{figure}[htp]
\begin{center}\setlength{\unitlength}{.06mm}

\begin{picture}(0,1000)(0,0)

\put(-70,950){\line(1,0){400}}
\put(-90,930){$\bullet$}
\put(110,930){$\bullet$}
\put(310,930){$\bullet$}
\put(-80,880){\footnotesize 1}
\put(110,880){\footnotesize 2}
\put(310,880){\footnotesize 3}
\put(-300,930){$I_3=$}

\put(-70,650){\line(1,0){200}}
\put(-90,630){$\bullet$}
\put(110,630){$\bullet$}
\put(-70,550){\line(1,0){200}}
\put(-90,530){$\bullet$}
\put(110,530){$\bullet$}
\put(-80,670){\footnotesize 1}
\put(120,670){\footnotesize 3}
\put(-80,490){\footnotesize 2}
\put(120,490){\footnotesize 4}
\put(-300,600){$J_4=$}

\put(-70,250){\line(1,0){200}}
\put(-90,230){$\bullet$}
\put(110,230){$\bullet$}
\put(-70,150){\line(1,0){200}}
\put(-90,130){$\bullet$}
\put(110,130){$\bullet$}
\put(-80,270){\footnotesize 1}
\put(120,270){\footnotesize 4}
\put(-80,90){\footnotesize 2}
\put(120,90){\footnotesize 3}
\put(-300,200){$Q_4=$}

\end{picture}
\caption{The graphs $I_3$, $J_4$ and $Q_4$}
\label{fig:IJQ}
\end{center}
\end{figure}

\begin{lem}[\cite{JKPR15}]\label{lem:IJQ}
Let $G=(V,E)$ be a labeled graph. If $G$ has an induced subgraph $H$
such that $\red(H)$ is equal to one of $I_3$, $J_4$ or $Q_4$  in Figure \ref{fig:IJQ}, then $G$
is not $12$-representable.
\end{lem}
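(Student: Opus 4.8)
The plan is to reduce to the minimal case and then dispatch each of the three graphs by a short argument about the positions of letters in a hypothetical representant. First I would apply Observation~\ref{obs:induced}: if $G$ is $12$-representable, then so is its induced subgraph $H$ with the labels inherited from $G$. Since $12$-representability of a labeled graph is clearly invariant under any order-isomorphism of the label set (just relabel a representant along the isomorphism), $H$ is $12$-representable if and only if $\red(H)$ is. So the whole statement reduces to showing that none of $I_3$, $J_4$, $Q_4$ admits a $12$-representant. In all three cases I would use the basic reformulation of the defining property: for labels $x<y$, the word $w_{\{x,y\}}$ avoids $12$ exactly when every occurrence of $y$ precedes every occurrence of $x$; so an edge $xy$ forces all copies of $\max(x,y)$ before all copies of $\min(x,y)$, while a non-edge $xy$ supplies some copy of $\min(x,y)$ before some copy of $\max(x,y)$. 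For a letter $i$ let $a_i$ (resp.\ $b_i$) denote the position of its first (resp.\ last) occurrence in the representant $w$; both exist because $A(w)$ contains every vertex.

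The three cases then run as follows. For $I_3$ (edges $12$ and $23$): the edge $12$ gives $b_2<a_1$ and the edge $23$ gives $b_3<a_2$, hence $b_3<a_1$, so every $3$ precedes every $1$ and $w_{\{1,3\}}$ avoids $12$ --- forcing $13$ to be an edge, a contradiction. For $J_4$ (edges $13$ and $24$): the non-edge $23$ places some $2$ before position $b_3$, the edge $13$ gives $b_3<a_1$, and the non-edge $14$ places some $4$ after position $a_1$; then this $2$ precedes this $4$, contradicting the edge $24$. For $Q_4$ (edges $14$ and $23$): the non-edges $13$ and $24$ give $a_1<b_3$ and $a_2<b_4$, while the edges $23$ and $14$ give $b_3<a_2$ and $b_4<a_1$, producing the impossible chain $a_1<b_3<a_2<b_4<a_1$.

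I do not anticipate a genuine obstacle: each case is a few lines of bookkeeping with first/last-occurrence positions, and the argument does not even need Theorem~\ref{at-most-twice-occur}. The only points needing a moment's care are the opening reduction --- observing that it suffices to refute $12$-representability of the three small graphs themselves, rather than arguing inside the possibly large ambient graph $G$ --- and correctly reading off the edge sets of $I_3$, $J_4$, $Q_4$ from Figure~\ref{fig:IJQ} before turning the crank on the inequalities.
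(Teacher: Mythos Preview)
Your argument is correct. The reduction via Observation~\ref{obs:induced} and the order-isomorphism remark is sound, and each of the three cases is dispatched cleanly: the $I_3$ case gives $b_3<a_2\le b_2<a_1$ forcing a spurious edge $13$; the $J_4$ case produces a $2$ before a $4$ contradicting the edge $24$; and the $Q_4$ case yields the impossible cycle $a_1<b_3<a_2<b_4<a_1$. Your reading of the edge sets from Figure~\ref{fig:IJQ} is also correct.

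Note, however, that the paper itself does not prove this lemma at all --- it is quoted from \cite{JKPR15} without proof, so there is no in-paper argument to compare against. What you have written is the natural direct proof (and is in the spirit of the original in \cite{JKPR15}); your first/last-occurrence bookkeeping is exactly the right device, and as you observe, Theorem~\ref{at-most-twice-occur} is not needed.
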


\begin{deff}
A labeling of a graph is \emph{good} if it contains no induced subgraphs
equal to $I_3, J_4$ or $Q_4$ in the reduced form.
\end{deff}

For two sets of integers $A,B$, we write $A<B$ if every element of $A$ is less than each element in $B$.  Also, a subset $U$
of $V$ is called a {\em cutset} of $G=(V,E)$ if $G \setminus U$ is disconnected.

\begin{lem}[\cite{JKPR15}]\label{lem:cutset}
Let $G=(V,E)$ be a labeled graph and $U$ be a cutset of $G$.
Assume that $G_1=(V_1,E_1)$ and $G_2=(V_2,E_2)$ are two
components of $G \setminus U$. If $G$ is $12$-representable,
$|V_1| \geq 2$, $|V_2| \geq 2$, and the smallest element of $V_1 \cup V_2$ is in $V_1$, then $V_1 < V_2$.
\end{lem}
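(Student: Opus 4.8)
The plan is to apply Lemma~\ref{lem:IJQ} to pairs of disjoint edges, one taken from $G_1$ and one from $G_2$. First I would record that $G$ has no edge between $V_1$ and $V_2$: such an edge would lie entirely inside $G\setminus U$ and would join the two components $G_1$ and $G_2$, which is impossible. Consequently, if $e$ is an edge of $G_1$ and $f$ an edge of $G_2$, their four endpoints are pairwise distinct and the subgraph they induce in $G$ is $2K_2$ (two disjoint edges). There are only three ways to partition four labels into two pairs, and in reduced form they correspond to a $2K_2$ whose two edges are ``separated'' ($\{1,2\}$ and $\{3,4\}$), ``interleaved'' ($\{1,3\}$ and $\{2,4\}$, which is $J_4$), or ``nested'' ($\{1,4\}$ and $\{2,3\}$, which is $Q_4$). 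Since $G$ is $12$-representable, Lemma~\ref{lem:IJQ} excludes the latter two, so for every edge $e$ of $G_1$ and every edge $f$ of $G_2$ one has $e<f$ or $f<e$; I will call this the \emph{separation property}.

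Next I would fix the orientation and spread it through $G_1$. Put $a=\min(V_1\cup V_2)$, so by hypothesis $a\in V_1$; since $G_1$ is connected and $|V_1|\ge2$, the vertex $a$ has a neighbour $b\in V_1$. Fix any edge $f=\{y,z\}$ of $G_2$, which exists because $G_2$ is connected with $|V_2|\ge2$. As $a<y$ and $a<z$, the separation property forces $\{a,b\}<f$ rather than $f<\{a,b\}$. Now any two edges of $G_1$ sharing a vertex must lie on the same side of $f$, for otherwise the shared vertex would be simultaneously below and above every element of $f$; and since $G_1$ is connected, a chain of pairwise vertex-sharing edges joins $\{a,b\}$ to any prescribed edge of $G_1$, so every edge of $G_1$ lies below $f$. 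Because $G_1$ is connected on at least two vertices, every vertex of $V_1$ lies on some edge of $G_1$, hence every vertex of $V_1$ is below both $y$ and $z$. Letting $f$ range over all edges of $G_2$ and using that every vertex of $V_2$ lies on some such edge, I obtain $V_1<V_2$.

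I expect the only genuine content to be the separation property, i.e.\ the bookkeeping that of the three pair-partitions of four labels exactly the interleaved and the nested ones reduce to $J_4$ and $Q_4$, so that $12$-representability leaves only the separated configuration; the propagation through the connected graph $G_1$ and the use of the global minimum $a$ to pin down the direction are then routine. The hypotheses $|V_1|\ge2$ and $|V_2|\ge2$ enter exactly to guarantee that $a$ and any chosen $y\in V_2$ have neighbours, and that no vertex of $V_1$ or $V_2$ is isolated in its component.
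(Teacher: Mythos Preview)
Your argument is correct. The separation property for a pair of disjoint edges follows exactly as you say from the trichotomy on $2K_2$ together with Lemma~\ref{lem:IJQ}, and the propagation through the connected component $G_1$ using a chain of vertex-sharing edges, anchored at the global minimum $a$, is sound. The hypotheses $|V_1|\ge 2$ and $|V_2|\ge 2$ are used precisely where you indicate.

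However, there is nothing to compare against in this paper: Lemma~\ref{lem:cutset} is quoted from \cite{JKPR15} and carries no proof here. What you have written is in fact the standard proof one would give, and it matches the argument in \cite{JKPR15} in spirit (reducing to the forbidden labelled configurations $J_4$ and $Q_4$ on a pair of independent edges, then using connectivity to globalize). So your proposal is fine as a self-contained justification, but the present paper simply cites the result rather than proving it.
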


\begin{thm}[\cite{JKPR15}]\label{thm:cycle}
The cycle graph of length larger than $4$ is not $12$-representable.
\end{thm}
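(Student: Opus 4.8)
Suppose for contradiction that $C_n$ with $n\ge 5$ is $12$-representable, and fix a labeling under which it is. List its vertices in cyclic order; after a cyclic shift of the indices we may write them as $v_1v_2\cdots v_n$ with $v_1$ carrying the smallest label. I would treat $n=5$ separately from $n\ge 6$, the reason being that deleting two vertices from $C_5$ never leaves two components each of size at least $2$, so Lemma~\ref{lem:cutset} has nothing to say about $C_5$.

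\emph{The case $n=5$.} Any three consecutive vertices $v_{i-1},v_i,v_{i+1}$ induce a path $P_3$. If the label of $v_i$ were strictly between those of $v_{i-1}$ and $v_{i+1}$, the reduced form of this induced $P_3$ would equal $I_3$, contradicting Lemma~\ref{lem:IJQ}. Hence every $v_i$ is a local extremum of the labeling: larger than both of its neighbours, or smaller than both. Colour such a vertex ``high'' in the first case and ``low'' in the second; then no two adjacent vertices share a colour (two adjacent highs would each have to exceed the other, and likewise for lows), so we obtain a proper $2$-colouring of $C_5$ — impossible for an odd cycle. The same argument in fact kills every odd cycle of length $\ge 5$, and is morally the classical proof that odd holes are not comparability graphs.

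\emph{The case $n\ge 6$.} Here I would use Lemma~\ref{lem:cutset} three times. First, $\{v_3,v_n\}$ is a cutset of $C_n$ whose removal leaves the two components $\{v_1,v_2\}$ and $\{v_4,v_5,\ldots,v_{n-1}\}$, both of size $\ge 2$ since $n\ge 6$; as the overall smallest label $v_1$ lies in the first, Lemma~\ref{lem:cutset} gives $\{v_1,v_2\}<\{v_4,\ldots,v_{n-1}\}$, so $v_2<v_i$ for all $4\le i\le n-1$. Symmetrically, deleting $\{v_2,v_{n-1}\}$ leaves $\{v_1,v_n\}$ and $\{v_3,\ldots,v_{n-2}\}$ and yields $v_n<v_i$ for all $3\le i\le n-2$. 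These two inequalities force the smallest label among $\{v_2,v_3,v_5,v_6,\ldots,v_n\}$ to be that of $v_2$ or of $v_n$. Now delete the cutset $\{v_1,v_4\}$, leaving the components $\{v_2,v_3\}$ and $\{v_5,\ldots,v_n\}$: if the minimum just found is $v_2$, Lemma~\ref{lem:cutset} forces $v_3<v_n$, contradicting $v_n<v_3$; if it is $v_n$, it forces $v_5<v_2$, contradicting $v_2<v_5$. Either way we have a contradiction.

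What remains is purely bookkeeping: checking that the three vertex pairs above are indeed cutsets with exactly the listed components — immediate from the cyclic adjacencies, with $n\ge 6$ ensuring that every component has at least two vertices and that indices like $v_5$ genuinely occur in the ranges invoked — and confirming that $v_2$ or $v_n$ attains the relevant minimum in the last step. The one conceptual hurdle is the one already isolated above: Lemma~\ref{lem:cutset} is vacuous for $C_5$, so the pentagon has to be knocked out by the separate extremal colouring argument rather than by the cutset method that handles all larger cycles.
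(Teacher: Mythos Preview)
The present paper does not prove Theorem~\ref{thm:cycle}; it is quoted from \cite{JKPR15} and used as a black box. So there is no ``paper's own proof'' to compare against here.

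That said, your argument is correct and self-contained. The $n=5$ case is clean: avoiding an induced $I_3$ on every consecutive triple forces each vertex to be a local extremum, and the resulting high/low labelling would properly $2$-colour $C_5$, which is impossible. (This is indeed the standard reason odd holes are not comparability graphs, and it disposes of all odd $n\ge 5$ at once.) For $n\ge 6$ your three applications of Lemma~\ref{lem:cutset} are valid: each of the pairs $\{v_3,v_n\}$, $\{v_2,v_{n-1}\}$, $\{v_1,v_4\}$ is a genuine cutset for $n\ge 6$ with both components of size at least two, the first two applications pin the minimum of $\{v_2,v_3,v_5,\ldots,v_n\}$ to $v_2$ or $v_n$, and the third forces either $v_3<v_n$ (contradicting $v_n<v_3$) or $v_5<v_2$ (contradicting $v_2<v_5$). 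The bookkeeping checks out, including at the boundary $n=6$.

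Your instinct to split off $C_5$ is exactly right, and your explanation of why---Lemma~\ref{lem:cutset} needs both components of size $\ge 2$, which fails for the pentagon---is the crux.
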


Finally, throughout this paper we assume that the graphs in question are connected since a graph is 12-representable if and only if each of its connected components is 12-representable.  Indeed, if $G$ is 12-representable then clearly each of its connected componets is 12-representable using the hereditary nature of 12-representation. Conversely, label the connected components $G_1$, $G_2,\ldots$ of a graph $G$ in a proper way, respectively, by $\{1,\ldots,|G_1|\}$, $\{|G_1|+1,\ldots,|G_1|+|G_2|\},\ldots$, and then use the respective word-representants $w_1$, $w_2,\ldots$ to obtain the word $w_1w_2\cdots$ 12-representing $G$.

\section{$12$-representability of square grid graphs}\label{sec:square}

Because the  grid graph given in Figure~\ref{fig:c4} can be
12-represented by $3412$, in all our arguments in this section, we
always assume that the  grid graphs are of size larger that $4$. Note that the labeling of the square graph in Figure~\ref{fig:c4} is the only good labeling up to rotation and swapping 1 and 2, and 3 and 4.

\begin{figure}[!htbp]
\begin{center}
\begin{tikzpicture}[line width=0.7pt,scale=0.7]
\coordinate (O) at (0,0);

\path ([c]O) node[left] {$3$}
++(0,1) node[left] {$1$}
++(1,0) node[right] {$4$}
++(0,-1) node[right] {$2$};
\fill[black!100] ([c]O) circle(0.5ex)
         ++(0,1)circle(0.5ex) ++(1,0)circle(0.5ex)
	      ++(0,-1)circle(0.5ex);
\draw[ thick] ([c]O)-- ++(0,1)-- ++(1,0)-- ++(0,-1)--++(-1,0);
\end{tikzpicture}
\caption{A labeled square grid graph with $4$ nodes}
\label{fig:c4}
\end{center}
\end{figure}
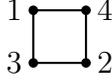

Let $F:=\{X\}\cup\{C_{2n}\}_{n \geq 4}$,
where $X$ is given in Figure~\ref{fig:corneradd1} and $C_{2n}$ is the cycle graph on $2n$ nodes. Then, we have the following lemma.

 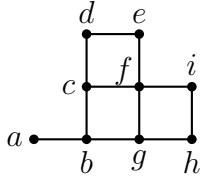
\begin{figure}[!htbp]
\begin{center}
\begin{tikzpicture}[line width=0.7pt,scale=0.7]
\coordinate (O) at (0,0);
\draw [thick] (O)--++(0,2)--++(1,0)--++(0,-1)--++(-1,0)++(0,-1)--++(2,0)
--++(0,1)--++(-1,0)--++(0,-1);
\fill[black!100] (O) circle(0.5ex) ++(0,1) circle(0.5ex)
 ++(0,1) circle(0.5ex) ++(1,0) circle(0.5ex) ++(0,-1) circle(0.5ex)
  ++(0,-1) circle(0.5ex) ++(1,0) circle(0.5ex)
  ++(0,1)circle(0.5ex);
\path (O)++(-1,0) node [left] {$a$};
\path (O) node[below] {$b$}
++(0,1) node[left] {$c$}
++(0,1) node[above] {$d$}
++(1,0) node[above] {$e$}
++(0,-2) node[below] {$g$}
++(1,0) node[below] {$h$}
++(0,1) node[above] {$i$};
\path (0.7,1.3) node {$f$};
\draw [thick] (O)--++(-1,0);
\fill[black!100] (O)++(-1,0) circle(0.5ex);
\end{tikzpicture}
\caption{The non-$12$-representable graph $X$}
\label{fig:corneradd1}
\end{center}
\end{figure}

\begin{lem}\label{lem:F-avoid}
If a graph $G=(V,E)$ has an induced subgraph $H$ in $F$, then $G$ is not $12$-representable.
\end{lem}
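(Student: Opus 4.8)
The plan is to derive the lemma from Observation~\ref{obs:induced}: an induced subgraph of a $12$-representable graph is $12$-representable, so it is enough to show that no member of $F$ is $12$-representable. For the cycles this is immediate, since each $C_{2n}$ with $n\ge4$ has length $2n\ge8>4$ and hence is not $12$-representable by Theorem~\ref{thm:cycle}. Thus the whole content of the statement is the single assertion that the nine-vertex graph $X$ of Figure~\ref{fig:corneradd1} is not $12$-representable, and I would prove this using only the cutset inequality of Lemma~\ref{lem:cutset} --- in particular without appealing to $I_3$, $J_4$, $Q_4$ or to Theorem~\ref{at-most-twice-occur}.

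Assume for contradiction that $X$ is $12$-representable, fix a labeling of $X$ realizing its $12$-representability, and regard this labeling as a linear order on the vertex set $\{a,\dots,i\}$. The idea is to apply Lemma~\ref{lem:cutset} to several cutsets, each time using a relation already obtained to discard one of the two possible orderings, until two of the conclusions conflict. I would first note that $X$ has the automorphism $\sigma$ that swaps the ``top cell'' with the ``right cell'', namely $c\leftrightarrow g$, $d\leftrightarrow h$, $e\leftrightarrow i$ with $a,b,f$ fixed; this is a routine check against the edge list of $X$. Now $X\setminus\{b,f\}$ consists of the isolated vertex $a$ together with two induced two-edge paths, on $\{c,d,e\}$ and on $\{g,h,i\}$; by Lemma~\ref{lem:cutset} one of these two sets lies entirely below the other, and after replacing the labeling by its image under $\sigma$ if necessary we may assume $\{c,d,e\}<\{g,h,i\}$, so in particular $c<h$ and $e<g$.

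Next I would feed in three more cutsets. Since $X\setminus\{c,f\}$ has components $\{d,e\}$ and $\{a,b,g,h,i\}$, the ordering $\{a,b,g,h,i\}<\{d,e\}$ is impossible (it would force $g<e$), so $\{d,e\}<\{a,b,g,h,i\}$ and hence $d<a$. Since $X\setminus\{c,g\}$ has components $\{a,b\}$ and $\{d,e,f,h,i\}$, the ordering $\{a,b\}<\{d,e,f,h,i\}$ is impossible (it would force $a<d$), so $\{d,e,f,h,i\}<\{a,b\}$ and hence $h<b$. Finally, since $X\setminus\{f,g\}$ has components $\{a,b,c,d,e\}$ and $\{h,i\}$, the ordering $\{h,i\}<\{a,b,c,d,e\}$ is impossible (it would force $h<c$), so $\{a,b,c,d,e\}<\{h,i\}$ and hence $b<h$. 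The last two conclusions give $h<b$ and $b<h$, a contradiction, so $X$ is not $12$-representable.

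The main obstacle is really the bookkeeping of assembling the right four cutsets together with the symmetry $\sigma$: one must verify that each chosen vertex set genuinely disconnects $X$ into pieces of size at least $2$ (so that Lemma~\ref{lem:cutset} applies), that $\sigma$ indeed lets one normalize the first split, and that the remaining three splits chain into a cycle of strict inequalities. An alternative would be to show that every labeling of $X$ contains an induced $I_3$, $J_4$ or $Q_4$ and then invoke Lemma~\ref{lem:IJQ}, but that approach needs a longer case analysis over the labelings of the three $4$-cycles of $X$ and of the pendant edge $ab$, so I would prefer the cutset argument sketched above.
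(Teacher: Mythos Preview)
Your argument is correct. The reduction to showing that $X$ is not $12$-representable is exactly the paper's first step, and your verification of the automorphism $\sigma$ and of the component structure of $X\setminus\{b,f\}$, $X\setminus\{c,f\}$, $X\setminus\{c,g\}$, $X\setminus\{f,g\}$ is accurate; the four resulting inequalities $e<g$, $d<a$, $h<b$, $b<h$ do chain into a contradiction.

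The route, however, is genuinely different from the paper's. The paper argues by locating the label~$1$: it uses cutset pairs $\{c,f\}/\{f,g\}$ and $\{c,f\}/\{c,g\}$ to rule out $1\in\{a,b\}$ and $1\in\{h,i\}$ (and $1\in\{d,e\}$ by symmetry), but for the central vertices $c,f,g$ it abandons cutsets and instead performs a step-by-step placement of $1,2,3,\ldots$ using Lemma~\ref{lem:IJQ} to exclude $I_3$, $J_4$, $Q_4$ at each stage. Your proof dispenses with Lemma~\ref{lem:IJQ} entirely: the symmetry normalisation via $\sigma$ and the additional cutset $\{b,f\}$ let you run the whole argument with Lemma~\ref{lem:cutset} alone, turning it into a short cycle of strict inequalities rather than a case analysis on label positions. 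The gain is a noticeably shorter and more uniform proof; the paper's approach, on the other hand, illustrates concretely how the obstruction manifests as a labelling problem, which foreshadows the ``good labeling'' theme used later in Theorem~\ref{thm:anygoodlabeling-12}.
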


\begin{proof}
In view of Theorem~\ref{thm:cycle}, it suffices to show that $X$ is not 12-representable.

We prove this by showing that there is no good labeling for $X$.
If $1 \in \{a,b\}$, then viewing $\{c,f\}$ as a cutset,
we have $\{d,e\}>\{h,i\}$ by Lemma \ref{lem:cutset}. While, choosing $\{f,g\}$ as a cutset,
we have $\{h,i\}>\{d,e\}$, a contradiction. Hence, $1 \notin \{a,b\}$.
If $1 \in \{h,i\}$, then viewing $\{c,f\}$ as a cutset,
we have $\{d,e\}>\{a,b\}$. While, choosing $\{c,g\}$ as a cutset,
we have $\{a,b\}>\{d,e\}$, a contradiction. Hence, $1 \notin \{h,i\}$. By symmetry, $1 \notin \{d,e\}$.

Now, assume that $g=1$. To avoid $I_3$, $J_4$ and $Q_4$,
there are two choices for $2$, namely, $i=2$ or $a=2$.
If $i=2$, then it can be checked that $h=3$
and $c=4$. To avoid $I_3$, we have $5 \notin \{b,d,f\}$.
To avoid $J_4$ and $Q_4$, we have $5 \notin \{a,e\}$.
This means that there is no position for $5$. Hence,
$i \neq 2$.
Similarly, setting $a=2$ makes us find no place for $3$.
It follows that $g \neq 1$.
By symmetry, $c \neq 1$.

Assume that $f=1$, then $h=2$ or $d=2$. W.l.o.g. assume that
$h=2$, then $i=3$.
Then, to avoid $I_3$, $J_4$ and $Q_4$, there is no position for
$4$. Hence, we deduce that $f \neq 1$.

Thus, there exists no good labeling for $X$, which completes the proof.
\end{proof}

If a graph $G$ does not contain an induced subgraph in $F$, then we say that $G$ is {\em $F$-avoiding}. By Lemma~\ref{lem:F-avoid}, any 12-representable graph is $F$-avoiding.

\begin{deff}
A square $S$ in a grid graph is called an \emph{end-square} if it is
incident with an edge $ab$ such that neither $a$ nor $b$ is a corner
point of a square different from $S$.
\end{deff}

\begin{figure}[!htbp]
\begin{center}
\begin{tikzpicture}[line width=0.7pt,scale=0.6]
\def\dist{6}
\coordinate (BL) at (0,0) ; 
\coordinate (BLa) at (1,0.5);
\coordinate (BLL) at (0,-0.5);


\begin{scope}[every coordinate/.style={shift={(1*\dist,0)}}]
\draw[very thick] ([c]BL)--++(3,0)--++(0,1)--++(-3,0)--++(0,-1);
\draw[very thick] ([c]BL)++(1,0)--++(0,2)--++(1,0)--++(0,-2);
\fill[black!100] ([c]BL) circle(0.5ex) ++(1,0)circle(0.5ex) ++(1,0)circle(0.5ex)++(1,0)circle(0.5ex)++(0,1)circle(0.5ex)
++(-1,0)circle(0.5ex)
++(-1,0)circle(0.5ex)++(-1,0)circle(0.5ex)
++(1,1)circle(0.5ex) ++(1,0)circle(0.5ex);
\path ([c]BLL) node {$a$}
++(1,0)  node {$b$}
++(1,0) node {$c$}
++(1,0) node {$d$}
++(0,1.8) node[right] {$h$}
++(-1,0) node[right] {$g$}
++(-1,0) node[left] {$f$}
++(-1,0) node[left] {$e$}
++(1,1.1) node[left] {$i$}
++(1,0) node[right] {$j$};
\end{scope}

\begin{scope}[every coordinate/.style={shift={(2*\dist,0)}}]
\draw[very thick] ([c]BLa)++(1,0)--++(2,0)--++(0,1)--++(-2,0)--++(0,-1);
\draw[very thick] ([c]BLa)++(1,0)--++(0,2)--++(1,0)--++(0,-2);
\fill[black!100] ([c]BLa)  ++(1,0)circle(0.5ex) ++(1,0)circle(0.5ex)++(1,0)circle(0.5ex)++(0,1)circle(0.5ex)
++(-1,0)circle(0.5ex)
++(-1,0)circle(0.5ex)
++(0,1)circle(0.5ex) ++(1,0)circle(0.5ex);
\draw[very thick] ([c]BLa)++(1,0)--++(-1,0)--++(0,-1)--++(1,0)
--++(0,1);
\fill[black!100] ([c]BLa)  circle(0.5ex)
++(0,-1)circle(0.5ex)
++(1,0)circle(0.5ex);
\path ([c]BLa)++(0,-1.5) node {$a$}
++(1,0)  node {$d$}
++(0,1.8) node[left] {$c$}
++(-1,0) node[left] {$b$}
++(2,-0.7) node {$e$}
++(1,0) node {$f$}
++(0,1.7) node[right] {$k$}
++(-1,0.1) node[right] {$j$}
++(-1,-0.1) node[left] {$g$}
++(0,1) node[left] {$h$}
++(1,0) node[right] {$i$};
\end{scope}
\end{tikzpicture}
\caption{The graphs $G_1$ and $G_2$, respectively. Non-$12$-representability of these graphs follows from the fact that they contain $X$ as an induced subgraph.}
\label{fig:G123}
\end{center}
\end{figure}
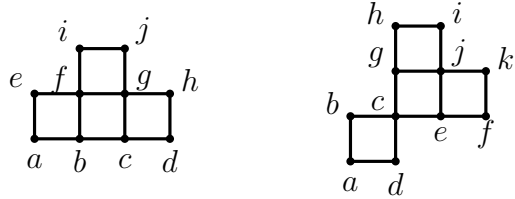

For example, in Figure~\ref{fig:G123}, all squares in $G_1$ but $bcgf$ are end-squares, and all squares in $G_2$ but $cgje$ are end-squares. In Lemma~\ref{lem:oneendsquare} below we will show that each $F$-avoiding square grid graph contains an end-square.

\begin{lem}\label{lem:posi-1}
Let $G=(V,E)$ be a labeled $12$-representable square grid graph, then
$1$ must be the label of a node of an end-square in $G$.
\end{lem}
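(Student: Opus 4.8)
The plan is to argue by contradiction: suppose the node labeled $1$ belongs only to squares that are not end-squares, and derive a forbidden configuration. First I would set up the local picture around the node $v$ carrying label $1$. Since $G$ is a square grid graph, $v$ is a corner of at least one square $S$; write the four corners of $S$ as $v$, together with its two grid-neighbors in $S$ and the diagonally opposite corner. Because $1$ is the global minimum, every edge incident to $v$ must, after restricting, begin with the letter $1$, so $w_{\{1,x\}}=1\cdots$ for every neighbor $x$ of $v$; in particular $1$ is never the middle of a pattern $12$ to the right, which is exactly what makes $v$ ``low'' in the transitive orientation. The key structural input is the assumption that $S$ is \emph{not} an end-square: by definition, for \emph{every} edge $ab$ of $S$, at least one of $a,b$ is a corner of some square other than $S$. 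I would use this to show that $S$ is surrounded by enough additional squares that the induced subgraph on a suitable neighborhood of $v$ contains a copy of $X$ (up to the reduced form) or a copy of one of $I_3$, $J_4$, $Q_4$ — ruling out $12$-representability by Lemma~\ref{lem:F-avoid} or Lemma~\ref{lem:IJQ}.

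More concretely, I would do a short case analysis on how many squares meet $v$ and on which edges of $S$ are ``blocked'' (i.e.\ have an endpoint that is a corner of another square). In the worst case $v$ has degree $2$ in $G$ (it lies in a single square $S$), and the non-end-square condition forces each of the two edges of $S$ at the two non-$v$ corners to be shared with a neighboring square; chasing these neighbors one more step produces, around $v$, the $3\times 2$-type arrangement of unit cells that is precisely the skeleton of the graph $X$ in Figure~\ref{fig:corneradd1}, with $v$ playing the role of one of the low corners. Then Lemma~\ref{lem:F-avoid} applies. If $v$ has higher degree (lies in two or more squares), the presence of those squares together with the non-end-square hypothesis on \emph{each} of them gives even more cells clustered at $v$, and one locates either $X$ or one of $I_3$, $J_4$, $Q_4$ directly; here the fact that $1$ is minimal is used to pin down which vertex of the forbidden pattern must be $v$.

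The main obstacle I anticipate is the bookkeeping in the case analysis: ``$S$ is not an end-square'' is a statement about all four edges of $S$, and one must carefully track, for each configuration of which edges are blocked and by which neighboring squares, that the resulting induced subgraph really is one of the listed forbidden graphs in reduced form and not some larger grid graph that happens to avoid them. In particular one has to be careful that adding the neighboring squares does not also add edges that destroy the copy of $X$; since $X$ is itself a (line) grid graph this should be automatic from the grid structure, but it needs to be checked. A secondary subtlety is boundary effects: $v$ or its square $S$ may sit at the edge of the ambient grid, so some of the ``expected'' neighboring squares are absent — but then the non-end-square condition on the relevant edge of $S$ is violated outright (the edge would have both endpoints free), contradicting the hypothesis, so these boundary cases are actually the easy ones. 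Once the configuration is identified, the conclusion is immediate from Lemma~\ref{lem:F-avoid} together with Observation~\ref{obs:induced}, and the contrapositive gives the statement.
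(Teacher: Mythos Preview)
Your plan has a genuine gap: you are trying to locate a forbidden \emph{unlabeled} induced subgraph (the graph $X$) using only the position of the label $1$, but whether $G$ contains $X$ as an induced subgraph is completely independent of the labeling. Concretely, take a straight ladder with five squares: the middle square is not an end-square, and its four corners are not corners of any end-square, yet the graph is $F$-avoiding (it certainly contains no copy of $X$). So the ``find $X$ around $v$'' step fails outright here, even though Lemma~\ref{lem:posi-1} still applies. The fallback you mention --- finding $I_3$, $J_4$, or $Q_4$ in reduced form --- does depend on the labeling, but it requires knowing the relative order of \emph{several} labels, not just the location of $1$; you give no mechanism for controlling those other labels, and a direct case analysis on them is not what you have sketched.

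The paper's argument is of a different nature and does not look for forbidden induced subgraphs at all. It uses the cutset lemma (Lemma~\ref{lem:cutset}): if $1$ sits at a node of a square $S$ that is not an end-square, then the squares abutting $S$ supply two small cutsets near $v$, each separating $G$ into two pieces of size at least $2$. Because $1$ is the global minimum and lies on a fixed side, Lemma~\ref{lem:cutset} forces $A<B$ for one choice of cutset and $B<A$ for another, where $A$ and $B$ are the label sets of suitable pieces. This contradiction is obtained by a short case analysis on whether the node labeled $1$ lies in one, two, or three squares (at most three, since $G$ is $F$-avoiding). The essential tool you are missing is precisely this cutset comparison; once you invoke Lemma~\ref{lem:cutset} in place of the search for $X$, the argument goes through cleanly.
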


\begin{proof}
 To prove this lemma, we assume to the contrary that $1$ is not a label of a node of an end-square of $G$. Since $G$ is $12$-representable, $G$ must be $F$-avoiding by Lemma \ref{lem:F-avoid}, so each node of $G$ belongs to at most three squares. We consider the following three cases.

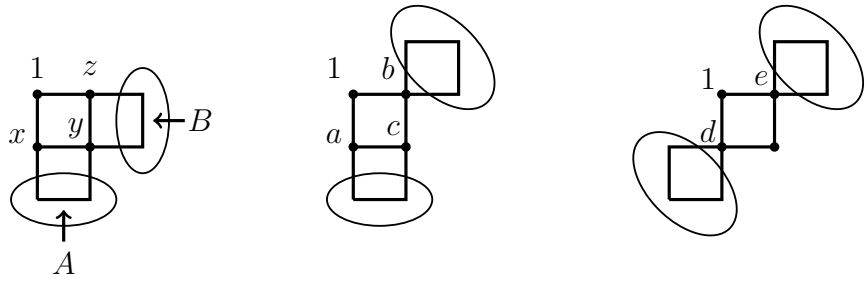
\begin{figure}[!htbp]
\begin{center}
\begin{tikzpicture}[line width=0.7pt,scale=0.7]
\def\dist{6}
\coordinate (BL) at (0,0) ; 
\coordinate (BL1) at (0.5,0);
\coordinate (BL2) at (0,1.2);
\coordinate (BL3) at (1,1);

\begin{scope}[every coordinate/.style={shift={(0,0)}}]
\draw[very thick] ([c]BL)-- ++(0,2)-- ++(2,0)-- ++(0,-1)-- ++(-2,0)
++(1,1)--++(0,-2)--++(-1,0);
\fill[black!100] ([c]BL)++(0,1) circle(0.5ex) ++(0,1)circle(0.5ex) ++(1,0)circle(0.5ex)++(0,-1)circle(0.5ex);
\draw ([c]BL1) ellipse (1 and 0.5);
\draw (2,1.5) ellipse (0.5 and 1);
\path ([c]BL2) node[left] {$x$}
++(0,1.3) node {$1$}
++(1,0) node {$z$}
++(0.1,-1.15) node[left] {$y$};
\draw [->,very thick] (0.5,-0.8) to (0.5,-0.2);
\path (0.5,-1.2) node {$A$};
\draw [->,very thick] (2.8,1.5) to (2.2,1.5);
\path (3.1,1.5) node {$B$};
\end{scope}

\begin{scope}[every coordinate/.style={shift={(1*\dist,0)}}]
\draw[very thick] ([c]BL)-- ++(0,2)-- ++(1,0)-- ++(0,-2)-- ++(-1,0)
++(0,1)--++(1,0)++(0,1)--++(0,1)--++(1,0)--++(0,-1)--++(-1,0);
\fill[black!100] ([c]BL)++(0,1) circle(0.5ex) ++(0,1)circle(0.5ex) ++(1,0)circle(0.5ex)++(0,-1)circle(0.5ex);
\path ([c]BL2) node[left] {$a$}
++(0,1.3) node[left] {$1$}
++(1,0) node[left] {$b$}
++(0.1,-1.15) node[left] {$c$};
\draw ([c]BL1) ellipse (1 and 0.5);
\draw  [rotate around={135:(7.7,2.7)}] (7.7,2.7) ellipse (1.2 and 0.7);
\end{scope}

\begin{scope}[every coordinate/.style={shift={(2*\dist,0)}}]
\draw[very thick] ([c]BL)-- ++(0,1)--++(1,0)--++(0,1)--++(1,0)
--++(0,1)--++(1,0)--++(0,-1)--++(-1,0)--++(0,-1)--++(-1,0)
--++(0,-1)--++(-1,0);
\draw  [rotate around={135:(14.7,2.7)}] (14.7,2.7) ellipse (1.2 and 0.7);
\draw  [rotate around={135:(12.3,0.3)}] (12.3,0.3) ellipse (1.2 and 0.7);
\path ([c]BL3) ++(0.1,0.3) node[left] {$d$}
++(0,1.0) node[left] {$1$}
++(1,0) node[left] {$e$};
\fill[black!100] ([c]BL3) circle(0.5ex) ++(0,1)circle(0.5ex) ++(1,0)circle(0.5ex)++(0,-1)circle(0.5ex);
\end{scope}
\end{tikzpicture}
\caption{Three subcases when $1$ belongs to only one square}
\label{fig:1one}
\end{center}
\end{figure}

\noindent
{\bf Case 1.} The node labeled by $1$ belongs to only one square. Possible situations in this case are given schematically in Figure~\ref{fig:1one}, where the ovals indicate the rest of the respective graphs. For the first subcase, it is easy to see that the number of nodes in $A$ or $B$ are  at least $2$. Choosing $\{x,y \}$ as a cutset, by Lemma~\ref{lem:cutset} we see that all labels of the nodes in $A$ are larger than those of $B$. While, choosing $\{y, z \}$ as a cutset, we have  all labels of the nodes in $B$ are larger than those of $A$, a contradiction. Hence, this subcase is impossible.
Similarly, by viewing $\{a,c\}$ and $\{b\}$ as a cutset respectively, we  prove that the second subcase is impossible. Viewing $\{d\}$ and $\{e\}$ as a cutset respectively, we
obtain that the third subcase is impossible as well.

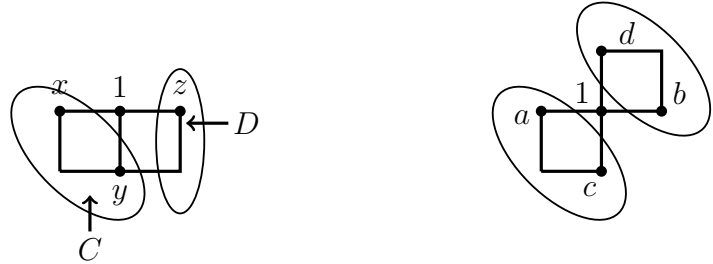
\begin{figure}[!htbp]
\begin{center}
\begin{tikzpicture}[line width=0.7pt,scale=0.8]
\def\dist{8}
\coordinate (BL) at (0,0) ; 

\begin{scope}[every coordinate/.style={shift={(0,0)}}]
\draw[very thick] ([c]BL)-- ++(0,1)-- ++(2,0)-- ++(0,-1)-- ++(-2,0)
++(1,0)--++(0,1);
\fill[black!100] ([c]BL)++(0,1) circle(0.5ex) ++(1,0)circle(0.5ex) ++(1,0)circle(0.5ex)++(-1,-1)circle(0.5ex);
\path ([c]BL)++(0,1.4) node {$x$}
++(1,0) node {$1$}
++(1,0) node {$z$}
++(-1,-1.8) node {$y$};
\draw  [rotate around={135:(0.3,0.3)}] (0.3,0.3) ellipse (1.4 and 0.7);
\draw   (2,0.5) ellipse (0.4 and 1.2);
\draw [->,very thick] (0.5,-1) to (0.5,-0.4);
\path (0.5,-1.3) node {$C$};
\draw [->,very thick] (2.8,0.8) to (2.1,0.8);
\path (3.1,0.8) node {$D$};
\end{scope}

\begin{scope}[every coordinate/.style={shift={(1*\dist,0)}}]
\draw[very thick] ([c]BL)-- ++(0,1)--++(2,0)--++(0,1)--++(-1,0)
--++(0,-2)--++(-1,0);
\draw  [rotate around={135:(9.7,1.7)}] (9.7,1.7) ellipse (1.4 and 0.7);
\draw  [rotate around={135:(8.3,0.3)}] (8.3,0.3) ellipse (1.4 and 0.7);
\path ([c]BL)++(0,0.9) node[left] {$a$}
++(1,0.4) node[left] {$1$}
++(1,0) node[right] {$b$}
++(-0.9,-1.6) node[left] {$c$}
++(0,2.6) node[right] {$d$};
\fill[black!100] ([c]BL)++(0,1) circle(0.5ex) ++(1,0)circle(0.5ex) ++(1,0)circle(0.5ex)++(-1,1)circle(0.5ex)++(0,-2)circle(0.5ex);
\end{scope}
\end{tikzpicture}
\caption{Two subcases when $1$ belongs to exactly two squares}
\label{fig:1two}
\end{center}
\end{figure}

\noindent
{\bf Case 2.} The node labeled by $1$ belongs to exactly two squares. Possible situations in this case are given in Figure~\ref{fig:1two}. For the first subcase, the number of nodes in $C$ is at least $5$ and
  the number of nodes in $D$ is at least $4$, which follows from
  the fact that the two squares $1$ belongs to are not end-squares.
  Choosing $\{x,y\}$ as a cutset, we see that all labels in $C$  except $x,y$ are  larger than those in $D$. Choosing $\{y,z\}$ as a cutset, we see that all labels in $C$ except $y$  are  smaller than those in $D$ except $z$, a contradiction. Thus, this subcase is impossible. Similarly, by viewing $\{a,c\}$ and $\{b,d\}$ as a cutset, respectively, we can prove that the second subcase is impossible.

\begin{figure}[!htbp]
\begin{center}
\begin{tikzpicture}[line width=0.7pt,scale=0.7]
\coordinate (BL) at (0,0) ; 

\draw[very thick] ([c]BL)-- ++(0,1)-- ++(1,0)-- ++(0,1)-- ++(1,0)
--++(0,-2)--++(-2,0)++(1,0)--++(0,1)--++(1,0);
\fill[black!100] ([c]BL)++(0,1) circle(0.5ex) ++(1,0)circle(0.5ex) ++(1,0)circle(0.5ex)++(-1,-1)circle(0.5ex)++(0,2)circle(0.5ex);
\path ([c]BL)++(0,1.3) node {$x$}
++(0.9,0) node[right] {$1$}
++(0.9,0) node[right] {$z$}
++(-1,-1.8) node {$y$}
++(-0.1,2.6) node {$k$};
\draw   (0,0.5) ellipse (0.5 and 1.1);
\draw   (1.5,2) ellipse (1.1 and 0.5);
\draw [->,very thick] (0,-0.9) to (0,-0.3);
\path (0,-1.2) node {$E$};
\draw [->,very thick] (2.9,2) to (2.3,2);
\path (3.2,2) node {$F$};
\end{tikzpicture}
\caption{The situation when $1$ belongs to three squares}
\label{fig:1three}
\end{center}
\end{figure}
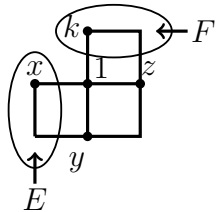

\noindent
{\bf Case 3.} The node labeled $1$ belongs to three squares. There is only one possibility here shown schematically in Figure~\ref{fig:1three}. Since the three squares
  $1$ belongs to are not end-squares, we see that the number of nodes in $E$, as well as that in $F$, is at least $4$. Choosing $\{x,y\}$ as a cutset, all labels in $E$ except $x$ are larger than those in $F$. On the other hand, choosing $\{z,k\}$ as a cutset, all labels in $E$  are larger than those in $F$ except $k$, a contradiction. Hence, this case is impossible.

Summarizing the cases above, we see that $1$ is never the label of a node of a non-end-square.
\end{proof}

\begin{lem}\label{lem:32}
Given a labeled $12$-representable square grid graph $G=(V,E)$ with an
end-square $S$ shown schematically in Figure \ref{fig:32}, with neither $2$ nor $3$ being a node of another square and $1$ being a node of another square, we can assume that a representant of $G$ is $w=3w_12w_212w_3$ with any letter in $w_1,w_2,w_3$ being at least $4$.
\end{lem}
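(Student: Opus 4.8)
The plan is to start from the general structure theorem (Theorem~\ref{at-most-twice-occur}), which gives a word-representant $w$ of $G$ in which every letter occurs at most twice, and then massage it into the claimed canonical form using the local configuration around the end-square $S$ together with the good-labeling constraints (Lemma~\ref{lem:IJQ}). Write the end-square as having corner points labeled $1,2,3$ and a fourth corner, where by hypothesis $2$ and $3$ lie in no other square while $1$ does. First I would record the adjacencies forced by $S$: the two vertices of the edge $23$ are adjacent to each other and to their two neighbors in $S$, and—crucially—since neither $2$ nor $3$ is a corner of any other square, the only neighbors of $2$ and of $3$ in all of $G$ are the ones inside $S$. This severely limits where the (at most two) copies of $2$ and of $3$ can sit relative to every other letter of $w$.

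Next I would use the fact that $1$ is the global minimum label. For $1$ to be adjacent to its two $S$-neighbors, and non-adjacent (i.e.\ forming a `12' pattern) with the remaining vertices, the projection $w_{\{1,x\}}$ for a non-neighbor $x$ must read $1\cdots x$ with a `12'; combined with $1$ being smallest this pins down that $1$ occurs in $w$ in a controlled way—essentially $w$ must begin with a prefix responsible for the adjacencies of $1$. I would then argue that, after possibly moving letters that are `free' (commute past the relevant positions without changing any projection $w_{\{x,y\}}$), one copy of $3$ can be pushed to the very front, giving $w = 3 w' $, and that the two copies of $2$ together with the second copy (or the relevant copy) of $1$ arrange themselves as $\cdots 2 \cdots 1 2 \cdots$, i.e.\ the pattern $212$ on the letters $\{1,2\}$, because $1\sim 2$ forbids a `12' in $w_{\{1,2\}}$ yet $2$ must still be non-adjacent to the many letters $\ge 4$ that come `after' it in $S$'s corner. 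Interleaving these observations yields the shape $w = 3\,w_1\,2\,w_2\,1\,2\,w_3$. The final bookkeeping step is to show every letter appearing in $w_1, w_2, w_3$ is $\ge 4$: the letters $1,2,3$ have all their occurrences already displayed (each at most twice, and $3$ occurs once up front with its second occurrence, if any, forced into $w_1$ or $w_2$ in a way that can be normalized), so nothing smaller than $4$ can remain in the three gap-words; here I would invoke the good-labeling exclusions of $I_3, J_4, Q_4$ to rule out the stray placements of a second $3$ or a displaced $2$ that would otherwise survive.

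The main obstacle I anticipate is the normalization argument: showing that the generic two-occurrence representant can actually be rewritten into \emph{exactly} the form $3w_12w_212w_3$ rather than some permuted variant, without disturbing any pairwise projection $w_{\{x,y\}}$. This requires a careful ``commutation/relocation'' lemma—identifying precisely which adjacent (or long-range) swaps of letters in $w$ preserve 12-representability of $G$—and then checking that finitely many such moves suffice to reach the canonical form. The cases to watch are (i) a second occurrence of $1$, $2$, or $3$ landing inside a gap-word, and (ii) the order in which the two copies of $2$ appear relative to the copies of $1$; in each case the good-labeling hypothesis (no induced $I_3,J_4,Q_4$) plus the cutset lemma (Lemma~\ref{lem:cutset}) applied to $\{1,\text{(neighbor)}\}$ should close off the bad configurations. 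Everything else—the forced local adjacencies and the ``$\ge 4$'' claim—should be routine once the normal form is in hand.
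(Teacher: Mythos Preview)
Your plan has a concrete factual error that breaks the argument: in the end-square of Figure~\ref{fig:32} the vertices $1$ and $2$ are diagonal, hence \emph{non-adjacent}. The adjacencies inside $S$ are $3\sim 1$, $3\sim 2$, and both $1$ and $2$ are adjacent to the unlabeled fourth corner; there is no edge $12$. So your line ``$1\sim 2$ forbids a `12' in $w_{\{1,2\}}$'' is backwards, and in fact your derived pattern $w_{\{1,2\}}=212$ \emph{contains} a $12$, which is inconsistent with what you just asserted. The correct mechanism is the opposite: since $1\not\sim 2$, there must be a $2$ to the right of the single $1$; and the $2$ to the left of $1$ is forced not by the $12$-edge but by the existence of a vertex $x\ge 4$ (a neighbor of $1$ in the other square) with $x\sim 1$ and $x\not\sim 2$, which places some $x$ before $1$ and some $2$ before that $x$.

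More broadly, you are reaching for heavier machinery than the lemma needs. The paper's proof is a short sequence of explicit word rewrites that each preserve $12$-representability: delete all $3$'s and prepend a single $3$ (legal because $N(3)=\{1,2\}$); delete all $1$'s except the leftmost (legal because $1$ is the minimum); observe the $2\cdots 1\cdots 2$ pattern as above; then slide the block between $1$ and the rightmost $2$ past that $2$ (legal because every such letter is already non-adjacent to $2$ via the earlier $2$). No appeal to $I_3,J_4,Q_4$ or to the cutset lemma is required, and there is no residual ``second $3$'' case to normalize away. If you want to salvage your outline, first fix the $1$--$2$ non-adjacency, then replace the vague ``commutation/relocation lemma'' by these four concrete moves; the invocation of Lemma~\ref{lem:IJQ} and Lemma~\ref{lem:cutset} can be dropped entirely.
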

\begin{figure}[!htbp]
\begin{center}
\begin{tikzpicture}[line width=0.7pt,scale=0.7]
\coordinate (O) at (0,0);

\path ([c]O) node[left] {$2$}
++(0,1) node[left] {$3$}
++(0.8,0.3) node[right] {$1$};
\fill[black!100] ([c]O) circle(0.5ex)
         ++(0,1)circle(0.5ex) ++(1,0)circle(0.5ex)
	      ++(0,-1)circle(0.5ex);
\draw[ thick] ([c]O)-- ++(0,1)-- ++(1,0)-- ++(0,-1)--++(-1,0)++(1,1)--++(0.4,0);
\draw   (1,0.5) ellipse (0.7 and 1.2);
\end{tikzpicture}
\caption{An end-square with $2$ and $3$ belonging to a single square}
\label{fig:32}
\end{center}
\end{figure}
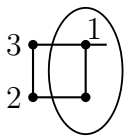

\begin{proof}
Let $w$ be a $12$-representant  of $G$.
Firstly, note that erasing all 3s in $w$ and placing a 3 at the beginning of the obtained word, we have a $12$-representant of $G$ since $3$ is only connected to $1$ and $2$ and no other connections are changed.

Secondly, we claim that we can assume that $1$ occurs only once in $w$. This can be verified by removing all but the leftmost $1$ in $w$. Since $1$ is the smallest letter, and the labels of the nodes connected to it must be to the left of
the leftmost $1$ in $w$, nothing will be changed after this operation.

Thirdly, we can assume that there are two copies of $2$ in $w$ and 1 is between the $2$s. Indeed, by Theorem~\ref{at-most-twice-occur}, we can assume that $2$ occurs at most twice in $w$, and clearly since there is no edge between $1$ and $2$, after $1$ there must be at least one $2$.  However, the other $2$ must be before $1$, since there exist nodes
connected to $1$ but not to $2$.

Lastly, we claim that the only $1$ and the $2$ after it can be assumed to be next to each other in $w$. If not, we can move all the letters between $1$ and the second $2$ right
after $2$ keeping their relative order. This operation is allowable since all letters between $1$ and $2$ must be not connected to $2$ because of the $2$ before $1$. Hence, moving the letters does not introduce any change. This completes the proof.
\end{proof}

\begin{prop}\label{lem:13}
Let $G=(V,E)$ be a $12$-representable square grid graph. Then, there exists a labeled copy $G'$ of $G$, which realizes $12$-representability of $G$, with
an end-square given in Figure \ref{fig:13}, with neither $1$ nor $3$ belonging to another square.
\end{prop}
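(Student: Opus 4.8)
The goal is to upgrade the conclusion of Lemma~\ref{lem:32}: there the end-square $S$ had $2$ and $3$ confined to $S$ while $1$ was allowed to lie on another square, and we learned a representant looks like $w=3w_12w_212w_3$ with all of $w_1,w_2,w_3$ having letters $\geq 4$. Now we want, instead, to relabel $G$ so that the node of $S$ which is free (belongs to no other square) other than the apex common to two squares gets label $2$ \emph{and} another free node gets label $1$, i.e.\ both $1$ and $3$ (in the Figure~\ref{fig:13} configuration) are confined to $S$. The plan is to start from the labeled copy supplied by Proposition~\ref{lem:posi-1} and Lemma~\ref{lem:32}: by Lemma~\ref{lem:posi-1}, $1$ must label a node of some end-square, and by the structure analysis (every node is on at most three squares, $F$-avoidance) we may choose an end-square $S$ whose ``defining edge'' $ab$ has both endpoints not corners of any other square; rename so that this end-square is as in Figure~\ref{fig:32}/\ref{fig:13}. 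Then Lemma~\ref{lem:32} hands us the normal form $w=3w_12w_212w_3$.

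The key step is then to perform an explicit swap of labels inside the word. In $w=3w_12w_212w_3$ the letter $3$ sits at the very front and is adjacent only to $1$ and $2$; the two free nodes of the end-square are the ones currently carrying $2$ and $3$ (both confined to $S$), while $1$ is the apex that may be shared. I would argue that we can swap the \emph{roles} of the labels so that the apex receives $3$ and the two free corners receive $1$ and $2$. Concretely: take the node currently labeled $1$ (the possibly-shared apex) and the node labeled $3$ (a free corner) and interchange their labels, then re-sort the alphabet. One must check this produces a \emph{good} labeling again — equivalently that the resulting word still $12$-represents the graph — which follows because $3$ in the new labeling is adjacent exactly to the same neighbourhood structure ($1$ and $2$ of $S$), and because in $w$ the only $1$ could be assumed unique and immediately followed by a $2$, the transposed word can be massaged (move the new smallest letter leftward past non-neighbours, exactly as in the proofs of Lemma~\ref{lem:32}) into a representant of the relabeled graph. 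Since Proposition~\ref{lem:13} only asserts existence of \emph{some} labeled copy $G'$ realizing $12$-representability with the prescribed end-square, establishing that this transposition preserves $12$-representability is enough.

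After the swap I would record the resulting normal form explicitly — something like $w'=w_1'1\,w_2'\,1\,2\,w_3'$ or, after cleaning up as in Lemma~\ref{lem:32}, a word beginning with the letters of $S$ in the pattern dictated by Figure~\ref{fig:13} — so that later proofs can quote a canonical representant. The bookkeeping here is: (i) the second corner that used to be $2$ is still free, still adjacent only within $S$, so nothing obstructs it keeping label $2$; (ii) the apex, now labeled $3$, may be shared with another square, but $3$'s being placed at the front of the word and $3$ being adjacent to $1$ and $2$ is exactly the situation already handled; (iii) all other labels only increased or decreased by the single transposition, and none of their adjacencies involve $1$ or $3$ except through $S$, so no $I_3$, $J_4$, or $Q_4$ is created (use Lemma~\ref{lem:IJQ}).

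\textbf{Main obstacle.} The delicate point is verifying that after transposing the labels $1\leftrightarrow 3$ (apex $\leftrightarrow$ free corner) the word can genuinely be rearranged into a $12$-representant of the \emph{relabeled} graph rather than merely of the old graph — i.e.\ that no new non-edge/edge appears when the old $1$ becomes a large-ish label $3$ and the old $3$ becomes the global minimum. I expect this to require re-running the four normalization moves from the proof of Lemma~\ref{lem:32} (put the new $3$ at the front; make the new $1$ unique; ensure two copies of $2$ with $1$ between them; pull the trailing $12$ adjacent) and checking at each move that the only adjacencies touched are those inside the end-square $S$, which are symmetric under the swap. Handling the case where the apex is actually shared with a square on the ``far'' side (so the old label $1$ had neighbours outside $S$) is where one must be most careful: those outside neighbours were all to the left of the unique $1$ in $w$, and after the swap they must end up to the left of the new occurrences of $3$, which the front-loading move guarantees.
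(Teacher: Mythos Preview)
Your proposal contains a concrete error in the choice of transposition. You propose to interchange the labels $1$ and $3$, so that ``the apex receives $3$ and the two free corners receive $1$ and $2$.'' But the proposition (and Figure~\ref{fig:13}) requires that \emph{neither $1$ nor $3$} belongs to another square; if the apex receives $3$, then $3$ sits on the shared node and therefore \emph{does} belong to another square. Your swap produces the wrong target configuration.

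The correct move, and the one the paper makes, is to swap $1\leftrightarrow 2$. In the Figure~\ref{fig:32} configuration the apex carries $1$ and the two free corners carry $2$ and $3$; after exchanging $1$ and $2$ the apex carries $2$ and the free corners carry $1$ and $3$, which is exactly Figure~\ref{fig:13}. Moreover, this swap admits a one-line verification rather than the re-running of all four normalization moves you sketch: starting from the normal form $w=3\,w_1\,2\,w_2\,1\,2\,w_3$ of Lemma~\ref{lem:32}, the paper simply takes $\overline{w}=3\,w_1\,1\,w_2\,2\,w_3$ and checks directly that it $12$-represents the relabeled graph $G''$. The reason this is so clean is that swapping $1$ and $2$ only reshuffles adjacencies among letters $\{1,2\}$ and the rest, and in $w$ the occurrences of $1$ and $2$ are already isolated in the block $2\,w_2\,1\,2$; replacing that block by $1\,w_2\,2$ exactly trades the neighbourhoods of $1$ and $2$ outside $S$. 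No further massaging is needed.

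By contrast, a $1\leftrightarrow 3$ swap would be genuinely problematic even beyond hitting the wrong target: the old $1$ (the apex) has neighbours outside $S$ (all letters in $w_1$ and $w_2$), so relabeling it as $3$ would force $3$ to have many neighbours, and $3$ could no longer sit harmlessly at the front of the word adjacent only to $1$ and $2$.
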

\begin{figure}[!htbp]
\begin{center}
\begin{tikzpicture}[line width=0.7pt,scale=0.7]
\coordinate (O) at (0,0);
\path ([c]O) node[left] {$1$}
++(0,1) node[left] {$3$}
++(0.8,0.3) node[right] {$2$};
\fill[black!100] ([c]O) circle(0.5ex)
         ++(0,1)circle(0.5ex) ++(1,0)circle(0.5ex)
	      ++(0,-1)circle(0.5ex);
\draw[ thick] ([c]O)-- ++(0,1)-- ++(1,0)-- ++(0,-1)--++(-1,0);
\draw   (1,0.5) ellipse (0.5 and 1.2);
\end{tikzpicture}
\caption{An end-square with neither $1$ nor $3$ belonging to another square}
\label{fig:13}
\end{center}
\end{figure}
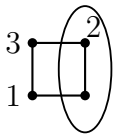
\begin{proof}
By Lemma \ref{lem:posi-1}, $1$ must be the label
of a node in an end-square. Then, for the labeling of the end-square, it is easy to check that there are
only two cases up to symmetry, which are given in Figures~\ref{fig:32}
and~\ref{fig:13}, respectively. If it is the case given in
 Figure~\ref{fig:13}, we are done. If not, by Lemma~\ref{lem:32} we assume that a  $12$-representant of $G$ is $w=3w_12w_212w_3$.
 Let $G''$ be the graph obtained from $G'$ by exchanging the labels $1$ and $2$. We claim that $G''$ is also $12$-representable and its representant is given by $\overline{w}=3w_11w_22w_3$. This can be verified by the fact that the nodes connected to both $1$ and $2$ in $G'$
 remain connected  to $1$ and $2$ in $G''$, while the nodes connected to
 $1$ but not $2$ in $G'$ become connected to  $2$ but not $1$ in $G''$. This completes the proof.
\end{proof}

As a by-product of the above lemma, we obtain the following corollary, which will be frequently used in the rest of the paper.

\begin{coro}\label{coro:represntant13}
Let $G=(V,E)$ be a $12$-representable square grid graph with an end-square given in Figure~\ref{fig:13}, then $G$ can be represented by some $w=3w_11w_22w_3$
with each letter in $w_1, w_2, w_3$ being at least $4$.
\end{coro}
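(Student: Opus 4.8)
The plan is to derive this corollary directly from Proposition~\ref{lem:13} together with the structural work already done in Lemma~\ref{lem:32}. Since $G$ is $12$-representable and has an end-square of the form in Figure~\ref{fig:13} (so the node labeled $1$ has degree-$2$ within that square, the node labeled $3$ sits above it with neither $1$ nor $3$ in any other square, and the node labeled $2$ is the fourth corner, possibly belonging to other squares), I can mimic the four normalization steps from the proof of Lemma~\ref{lem:32}, adapted to the present labeling.

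First I would take an arbitrary $12$-representant $w$ of $G$ and clean up the letter $3$: since $3$ is adjacent only to $1$ and $2$ and to nothing else, erasing all occurrences of $3$ and reinserting a single $3$ at the very front changes none of the adjacency/non-adjacency conditions, so $w$ may be assumed to begin with $3$. Next I would normalize the letter $1$: here, unlike in Lemma~\ref{lem:32}, the node $1$ is \emph{not} in any other square, so its only neighbors are $3$ and $2$; by Theorem~\ref{at-most-twice-occur} it occurs at most twice, and I claim it can be taken to occur exactly once — delete all but the leftmost copy of $1$; because $1$ is the smallest letter, $w_{\{1,x\}}$ for any $x$ loses no occurrence of the pattern $12$ under this deletion (a $12$ using a non-leftmost $1$ would force a $12$ using the leftmost $1$), and the conditions involving non-neighbors of $1$ are unaffected. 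Then I would normalize $2$: since $1\not\sim 2$, there must be a $2$ after the (unique) $1$; and since $2$ has neighbors not shared with $1$ — at least the node $3$, and the other corner nodes of the square containing $2$ — there must also be a $2$ before $1$, so $2$ occurs exactly twice with $1$ between its two copies. Finally I would push the ``$12$'' together: all letters strictly between the unique $1$ and the second $2$ are non-neighbors of $2$ (they sit after the first $2$, so if one were adjacent to $2$ the pattern $12$ would already be destroyed), hence moving them, in order, to just after the second $2$ introduces no new $12$ and destroys none; this yields $w$ of the form $3\,w_1\,1\,2\,w_3'$. Absorbing notation, $w = 3w_11w_22w_3$ with the block between $1$ and the second $2$ empty works after renaming, or one keeps $w_2$ empty; either way every letter of $w_1,w_2,w_3$ is at least $4$ because $1,2,3$ have each been placed at their designated unique-or-double positions.

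The one genuinely new point compared to Lemma~\ref{lem:32} — and the step I expect to need the most care — is the very last claim that the remaining letters of $w_1,w_2,w_3$ are all $\geq 4$, i.e.\ that after the first three normalization steps the copies of $1$, $2$, $3$ occur \emph{only} where advertised. The occurrence of $3$ is handled by the first step; $1$ occurs once by the second step; but I must make sure $2$ does not occur a third time hidden inside $w_1$ or $w_3$. This is where Theorem~\ref{at-most-twice-occur} is essential: after the normalization we have used up both permitted copies of $2$ (one before $1$, one right after), so none can appear in $w_1$, $w_2$, or $w_3$. Combining, $w_1,w_2,w_3\in\{4,5,\dots\}^*$, which is exactly the assertion. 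Thus the corollary follows by specializing the Lemma~\ref{lem:32}-style argument to the Figure~\ref{fig:13} labeling, the only difference being that node $1$ — not node $3$ — is the one that is isolated from other squares, and this is precisely what lets us conclude $1$ appears a single time; no separate relabeling is needed because Proposition~\ref{lem:13} already guarantees a copy of $G$ with the Figure~\ref{fig:13} end-square in hand. \qed
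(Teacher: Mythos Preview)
Your argument goes wrong at Step~3. In the Figure~\ref{fig:13} configuration the vertex labeled $1$ sits at one corner of the square and $2$ at the diagonally opposite corner; the two neighbours of $1$ inside the square are $3$ and the fourth (unlabeled) vertex, and \emph{both of these are also neighbours of $2$}. Thus your claim that ``$2$ has neighbours not shared with $1$ --- at least the node $3$'' is simply false: $3$ is adjacent to $1$ as well. Worse, if $2$ does have a neighbour $y\ge 4$ outside the end-square (so $y\sim 2$ and $y\not\sim 1$), the usual reasoning yields the \emph{opposite} of what you assert: every copy of $y$ precedes every $2$ (from $y\sim 2$) while the unique $1$ precedes some copy of $y$ (from $y\not\sim 1$), whence $1$ precedes every $2$ and there is no $2$ before $1$ at all. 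Hence your conclusion ``$2$ occurs exactly twice with $1$ between its two copies'' is unsupported, and your Step~4 then silently drops a copy of $2$ that, under your own Step~3, was supposed to live inside $w_1$ --- which is precisely why your final word fails to match the form $3w_11w_22w_3$ with $w_1\in\{4,5,\dots\}^*$.

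The paper does not redo the normalisation from scratch here. It treats the corollary as a direct by-product of the proof of Proposition~\ref{lem:13}: swap the labels $1$ and $2$ to pass from the Figure~\ref{fig:13} configuration to the Figure~\ref{fig:32} configuration, apply Lemma~\ref{lem:32} to obtain a representant $3w_12w_212w_3$ of the swapped graph, and then verify (exactly as in the last paragraph of the proof of Proposition~\ref{lem:13}) that $\overline{w}=3w_11w_22w_3$ represents the original $G$. The extra copy disappears in that verification step, not via a normalisation move; transplanting the four steps of Lemma~\ref{lem:32} verbatim, as you attempt, does not work because the roles of $1$ and $2$ relative to the rest of $G$ are reversed.
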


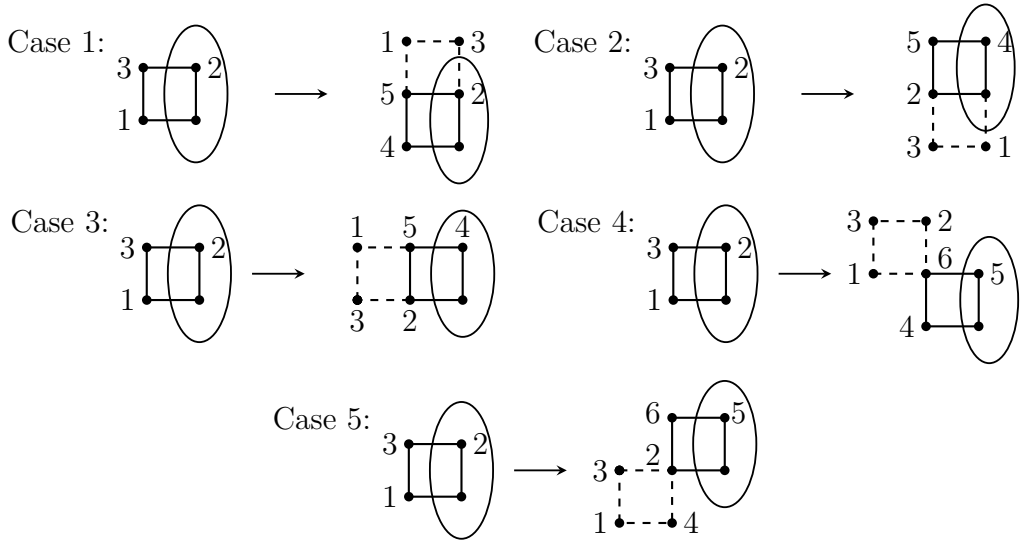
\begin{figure}[!htbp]
\begin{center}
\begin{tikzpicture}[line width=0.7pt,scale=0.7]
\tikzset{>=stealth};
\coordinate (bln) at (0,0);
\coordinate (B) at (2.5,0.5);
\coordinate (C) at (5,-0.5);

\coordinate (bln2) at (10,0);
\coordinate (B2) at (12.5,0.5);
\coordinate (C2) at (15,-0.5);

\path ([c]bln) node[left] {$1$}
++(0,1) node[left] {$3$}
++(1,0) node[right] {$2$}
++(-1.5,0.5) node[left] {Case 1:};
\fill[black!100] ([c]bln) circle(0.5ex)
         ++(0,1)circle(0.5ex) ++(1,0)circle(0.5ex)
	      ++(0,-1)circle(0.5ex);
\draw[ thick] ([c]bln)-- ++(0,1)-- ++(1,0)-- ++(0,-1)--++(-1,0);
\draw   (1,0.5) ellipse (0.6 and 1.3);

\draw[->] ([c]B)-- ++(1,0);

\path ([c]C) node[left] {$4$}
++(0,1) node[left] {$5$}
++(1,0) node[right] {$2$}
++(0,1) node[right] {$3$}
++(-1,0) node[left] {$1$};

\fill[black!100] ([c]C) circle(0.5ex)
         ++(0,1)circle(0.5ex) ++(1,0)circle(0.5ex)
	      ++(0,-1)circle(0.5ex) ++(0,2)circle(0.5ex)
          ++(-1,0)circle(0.5ex);
\draw[ thick] ([c]C)-- ++(0,1)-- ++(1,0)-- ++(0,-1)--++(-1,0);
\draw[dashed,thick] ([c]C)++(0,1)--++(0,1)--++(1,0)--++(0,-1);
\draw   (6,0) ellipse (0.55 and 1.2);
\path ([c]bln2) node[left] {$1$}
++(0,1) node[left] {$3$}
++(1,0) node[right] {$2$}
++(-1.5,0.5) node[left] {Case 2:};
\fill[black!100] ([c]bln2) circle(0.5ex)
         ++(0,1)circle(0.5ex) ++(1,0)circle(0.5ex)
	      ++(0,-1)circle(0.5ex);
\draw[ thick] ([c]bln2)-- ++(0,1)-- ++(1,0)-- ++(0,-1)--++(-1,0);
\draw   (11,0.5) ellipse (0.6 and 1.3);

\draw[->] ([c]B2)-- ++(1,0);

\path ([c]C2) node[left] {$3$}
++(0,1) node[left] {$2$}
++(1,0) 
++(0,1) node[right] {$4$}
++(-1,0) node[left] {$5$}
++(1,-2,0) node[right] {$1$};

\fill[black!100] ([c]C2) circle(0.5ex)
         ++(0,1)circle(0.5ex) ++(1,0)circle(0.5ex)
	      ++(0,-1)circle(0.5ex) ++(0,2)circle(0.5ex)
          ++(-1,0)circle(0.5ex);
\draw[dashed,thick] ([c]C2)-- ++(0,1)++(1,0)-- ++(0,-1)--++(-1,0);
\draw[thick] ([c]C2)++(0,1)--++(0,1)--++(1,0)--++(0,-1)--++(-1,0);
\draw   (16,1) ellipse (0.55 and 1.2);
\end{tikzpicture}
\vskip 5pt
\begin{tikzpicture}[line width=0.7pt,scale=0.7]
\tikzset{>=stealth};
\coordinate (bln) at (0,0);
\coordinate (B) at (2,0.5);
\coordinate (C) at (4,0);

\coordinate (bln2) at (10,0);
\coordinate (B2) at (12,0.5);
\coordinate (C2) at (13.8,0.5);

\path ([c]bln) node[left] {$1$}
++(0,1) node[left] {$3$}
++(1,0) node[right] {$2$}
++(-1.5,0.5) node[left] {Case 3:};
\fill[black!100] ([c]bln) circle(0.5ex)
         ++(0,1)circle(0.5ex) ++(1,0)circle(0.5ex)
	      ++(0,-1)circle(0.5ex);
\draw[ thick] ([c]bln)-- ++(0,1)-- ++(1,0)-- ++(0,-1)--++(-1,0);
\draw   (1,0.5) ellipse (0.6 and 1.3);

\draw[->] ([c]B)-- ++(1,0);

\draw[dashed, thick] ([c]C)--++(0,1)--++(1,0)++(0,-1)--++(-1,0);
\draw[thick] ([c]C)++(1,0)--++(0,1)--++(1,0)--++(0,-1)--++(-1,0);

\draw   (6,0.5) ellipse (0.6 and 1.2);

\path ([c]C)++(0,-0.4) node {$3$}
++(0,1.8) node {$1$}
++(1,0) node {$5$}
++(0,-1.8) node {$2$}
++(1,1.8) node  {$4$};

\fill[black!100] ([c]C) circle(0.5ex)
         ++(0,1)circle(0.5ex) ++(1,0)circle(0.5ex)
	      ++(1,0)circle(0.5ex) ++(0,-1)circle(0.5ex)
++(-1,0)circle(0.5ex) ++(-1,0)circle(0.5ex);

\path ([c]bln2) node[left] {$1$}
++(0,1) node[left] {$3$}
++(1,0) node[right] {$2$}
++(-1.5,0.5) node[left] {Case 4:};
\fill[black!100] ([c]bln2) circle(0.5ex)
         ++(0,1)circle(0.5ex) ++(1,0)circle(0.5ex)
	      ++(0,-1)circle(0.5ex);
\draw[ thick] ([c]bln2)-- ++(0,1)-- ++(1,0)-- ++(0,-1)--++(-1,0);
\draw   (11,0.5) ellipse (0.6 and 1.3);

\draw[->] ([c]B2)-- ++(1,0);

\draw[dashed,thick] ([c]C2)--++(1,0)--++(0,1)--++(-1,0)--++(0,-1);
\draw[thick] ([c]C2)++(1,0)--++(1,0)--++(0,-1)--++(-1,0)--++(0,1);
\path ([c]C2) node[left] {$1$}
++(0,1) node[left] {$3$}
++(1,0) node[right] {$2$}
++(0,-0.7) node[right] {$6$}
++(1,-0.3) node[right] {$5$}
++(-1,-1) node[left] {$4$};

\fill[black!100] ([c]C2) circle(0.5ex)
++(0,1)circle(0.5ex)++(1,0)circle(0.5ex)
++(0,-1)circle(0.5ex) ++(1,0)circle(0.5ex)
++(0,-1)circle(0.5ex)++(-1,0)circle(0.5ex);
\draw   (16,0) ellipse (0.55 and 1.2);
\end{tikzpicture}
\vskip 5pt
\begin{tikzpicture}[line width=0.7pt,scale=0.7]
\tikzset{>=stealth};
\coordinate (bln) at (0,0);
\coordinate (B) at (2,0.5);
\coordinate (C) at (4,0.5);

\path ([c]bln) node[left] {$1$}
++(0,1) node[left] {$3$}
++(1,0) node[right] {$2$}
++(-1.5,0.5) node[left] {Case 5:};
\fill[black!100] ([c]bln) circle(0.5ex)
         ++(0,1)circle(0.5ex) ++(1,0)circle(0.5ex)
	      ++(0,-1)circle(0.5ex);
\draw[ thick] ([c]bln)-- ++(0,1)-- ++(1,0)-- ++(0,-1)--++(-1,0);
\draw   (1,0.5) ellipse (0.6 and 1.3);

\draw[->] ([c]B)-- ++(1,0);

\draw[dashed, thick] ([c]C)--++(1,0)--++(0,-1)--++(-1,0)--++(0,1);
\draw[thick] ([c]C)++(1,0)--++(0,1)--++(1,0)--++(0,-1)--++(-1,0);

\draw   (6,1) ellipse (0.6 and 1.2);

\path ([c]C) node[left] {$3$}
++(0,-1) node[left] {$1$}
++(1,0) node[right] {$4$}
++(0,1.3) node[left] {$2$}
++(0,0.8) node[left] {$6$}
++(0.9,0) node[right] {$5$}
;

\fill[black!100] ([c]C) circle(0.5ex)
        ++(1,1)circle(0.5ex)
	      ++(1,0)circle(0.5ex) ++(0,-1)circle(0.5ex)
++(-1,0)circle(0.5ex) ++(-1,0)circle(0.5ex)
 ++(0,-1)circle(0.5ex)++(1,0)circle(0.5ex);
\end{tikzpicture}
\caption{Five ways in extending a $12$-representable square grid graph}
\label{fig:extend1}
\end{center}
\end{figure}


\begin{lem}\label{lem:extend}
Given a $12$-representable square grid graph $G$, we can obtain a new $12$-representable square grid graph $G'$ by extending $G$ in the
five ways presented schematically in Figure~\ref{fig:extend1}, where the extensions are applied to end-squares, which are indicated by dashed lines, and the assumption is that the dashed edges are the only new edges in the obtained graph.
\end{lem}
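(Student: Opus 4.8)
The plan is to reduce each of the five constructions to a normal-form representant of $G$ and then repair it by a handful of local edits. Since all five extensions attach a new square to an end-square of $G$, I would first use Proposition~\ref{lem:13} to relabel $G$ so that this end-square $S$ is the one of Figure~\ref{fig:13}, and then invoke Corollary~\ref{coro:represntant13} to fix a representant $w=3w_11w_22w_3$ of $G$ in which every letter of $w_1,w_2,w_3$ is at least $4$; here $1$ and $3$ are the two private corners of $S$, $2$ is its top-right corner, and its bottom-right corner carries some label $v\ge 4$. Before the cases I would record the structural facts this normal form forces and that every verification below uses: the letters $1,2,3$ occur exactly once, at the displayed positions; since the only neighbours of vertex $1$ are $3$ and $v$, all occurrences of $v$ lie inside $w_1$ and $v$ is the only letter occurring exclusively in $w_1$; the only neighbours of vertex $3$ are $1$ and $2$; and every neighbour of $2$ other than $3$ and $v$ occurs inside $w_1w_2$.

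For each case I would then write down the word $w'$ obtained from $w$ as prescribed by Figure~\ref{fig:extend1}: relabel the old letters by the bijection shown there (it shifts the blocks $w_1,w_2,w_3$ uniformly, and in four of the five cases it is order-preserving, so the relabelled copy of $w$ already $12$-represents $G$ on the old vertices); splice one extra copy of the relabelled letter $3$ and the one, two or three new small letters into the word at positions forced by the required neighbourhoods -- essentially into the front segment preceding $w_1$, since each new vertex is non-adjacent to the old vertex $3$ (which sits only at the very front of $w$) and, except for a single new vertex in the one extension that runs through $v$, also non-adjacent to $v$; and in the one case where the relabelling reverses the order of the old labels $1$ and $2$ (the new square glued along the top edge $3$--$2$ of $S$), splice one further copy of the relabelled letter $1$ after the block $w_2$ to repair the single pair that the reversal had spoiled.

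The verification splits cleanly. For two old vertices, an order-preserving relabelling preserves $12$-representation, and the only duplicated letters are the ones playing the roles of old $1$ and old $3$, whose neighbourhoods are the explicitly known tiny sets above, so one checks directly that duplicating them breaks none of their genuine adjacencies (and, in the reversed case, fixes exactly the one that needed fixing). For a new vertex against an old or a new one, the adjacency is read straight off the insertion positions, using that a new vertex with the smallest label has all of its neighbours before its unique occurrence and all of its non-neighbours after it, and the structural facts pin down where the relevant old letters lie. I expect the only genuinely delicate point to be the bookkeeping in that one reversed case -- placing the duplicate of relabelled $3$ after the new top-right vertex but before the new top-left vertex and before the occurrences of relabelled $1$ and $2$, and placing the extra copy of relabelled $1$ after $w_2$, so that all the constraints are simultaneously satisfiable -- everything else being routine pattern checking. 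Finally, I would note that in each case the new square created in $G'$ is again, up to rotation or reflection, of the form of Figure~\ref{fig:13}, so the construction can be iterated.
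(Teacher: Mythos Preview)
Your plan is exactly the paper's proof: invoke Corollary~\ref{coro:represntant13} to put $G$ into the normal form $w=3w_11w_22w_3$, apply the relabelling dictated by Figure~\ref{fig:extend1}, and splice in the small new labels (plus, in one case each, a duplicate of an old label) to obtain an explicit representant of $G'$; the paper simply writes down the five resulting words and leaves the routine verification to the reader, while you spell out the structural facts that make that verification go through. Two minor bookkeeping corrections for when you write it out: the extra copy of the relabelled $3$ is needed only in the extension through $v$ (your Case glued along the edge $1$--$v$), not in all five cases; and in the reversed case the duplicate of the relabelled $1$ must sit strictly \emph{after} the unique occurrence of the relabelled $2$, not merely ``after the block $w_2$'', since otherwise no $12$-pattern between them is created.
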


\begin{proof}
In view of Corollary \ref{coro:represntant13}, we may assume that
a $12$-representant of $G$ is $w=3 w_1 1 w_2 2 w_3$,
with each letter in $w_1, w_2, w_3$ being larger than $3$.
It is enough to find the corresponding
$12$-representant of $G'$ obtained under each extension.
Assume that $\pi'$ is obtain from $\pi$ by adding $2$ to each letter of $\pi$, while $\pi''$ is obtained from $\pi$ by adding $3$ to each letter of $\pi$. Then, it is not difficult to see that a representant of $G'$ for Cases 1--5 is, respectively,  $\overline{w}=3 5 1 w_1' 4 w_2' 2 4 w_3'$,  $\overline{w}=3 5  w_1' 1 5 2 w_2'  4 w_3'$, $\overline{w}=3 5  1 w_1' 2 w_2'  4 w_3'$, $\overline{w}=3 6  1 2 w_1'' 4 w_2''  5 w_3''$, and $\overline{w}=3 4 1 6 w_1'' 2 w_2''  5 w_3''$. Hence, in each of the five cases, $G'$ is 12-representable.
\end{proof}

\begin{lem}\label{lem:oneendsquare}
Each $F$-avoiding square grid graph $G$ has an end-square.
\end{lem}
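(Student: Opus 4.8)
The plan is to argue by contradiction: suppose $G$ is an $F$-avoiding square grid graph containing no end-square, and extract structural consequences that eventually force a cycle $C_{2n}$ with $n\geq 4$ or a copy of $X$ as an induced subgraph, contradicting $F$-avoidance. First I would fix terminology: for each square $S$ and each of its four boundary edges $e=ab$, call $e$ a \emph{free edge} of $S$ if neither $a$ nor $b$ is a corner of any square other than $S$; thus $S$ is an end-square precisely when it has a free edge. So the hypothesis says \emph{every} square of $G$ has all four of its boundary edges shared with, or incident to, other squares.

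Next I would analyze a single square $S$ with corners, say, $p,q,r,s$ in cyclic order, under the no-end-square assumption. Since $G$ is $F$-avoiding, Lemma~\ref{lem:F-avoid} tells us no vertex lies in more than three squares (this is the "each node of $G$ belongs to at most three squares" fact used in Lemma~\ref{lem:posi-1}, which in turn comes from $X$ being forbidden). For the edge $pq$ to fail to be free, at least one of $p,q$ must be a corner of another square; doing this for all four edges of $S$ and keeping track of which vertices of $S$ are "used up" by neighbouring squares, one gets a short case analysis: either two opposite corners of $S$ each belong to a further square on the far side (so $S$ sits "in the middle of a strip"), or the four neighbouring squares wrap around $S$ in a way that, combined with the degree-$\leq 3$ restriction, pins down the local picture. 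I would organise this exactly as in the proof of Lemma~\ref{lem:posi-1}, where Figures~\ref{fig:1one}--\ref{fig:1three} already enumerate how a square can be surrounded.

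The core of the argument is then a minimality/extremality trick. Since $G$ is finite, pick a square $S$ whose "centre" is extremal in some fixed direction — say, a square all of whose neighbouring squares lie weakly to the left, i.e.\ $S$ has no neighbour square immediately to its right. Its right edge $e=ab$ then has no square on the other side; for $e$ not to be free, one of $a,b$ must be a corner of a square that is not to the right of $S$ — hence a square sharing only the vertex $a$ (or $b$) with $S$, sitting diagonally. Now I trace the boundary of the region around $S$: every time an edge fails to be free, a new square is forced, and because no vertex can lie in four squares, these forced squares cannot "pile up" and must instead propagate outward, each one again requiring further squares by the no-end-square hypothesis. Since $G$ is finite this propagation cannot continue forever, so it must close up — and a closed chain of squares sharing only corners (or a closed ring of squares around a hole) produces either an induced cycle of length $\geq 8$ (hence some $C_{2n}$, $n\geq 4$) or, in the ring case, a subgraph containing $X$ (which is exactly the "corner plus a square" obstruction of Figure~\ref{fig:corneradd1}). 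Either way we contradict $F$-avoidance. The contrapositive is the lemma.

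The main obstacle I anticipate is the case bookkeeping in the propagation step: a square can be "blocked" on a given side either by an edge-sharing square or by a mere corner-sharing square, and the degree-$\leq 3$ constraint interacts subtly with these two possibilities, so one must be careful that the forced configuration really is either a long induced even cycle or genuinely contains $X$ as an \emph{induced} subgraph (not merely a subgraph). Handling the "ring around a single hole" sub-case cleanly — showing the minimal such ring already forces $X$ rather than only a $C_{2n}$ — is the delicate point; everything else is routine grid geometry once the extremal square is chosen.
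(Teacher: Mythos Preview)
Your approach is genuinely different from the paper's, and considerably more intricate than what is actually needed.

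The paper's proof is a short direct argument, not a contradiction. It splits into two cases. If no two squares of $G$ share an edge, the squares are linked only at corners and $G$ is a diagonal ``staircase'' (Figure~\ref{fig:nocomedge}); its extremal square is an end-square. Otherwise, the paper simply \emph{walks}: start at any square, step to a neighbouring square (preferring an edge-sharing neighbour, falling back to a corner-sharing one), never stepping back to the previous square. The single observation driving termination is that avoidance of the cycles $C_{2n}$ $(n\ge 4)$ prevents the $(i{+}1)$-st square from touching any square already visited with a smaller index; hence the walk is injective, and by finiteness it must reach a square with no forward move --- an end-square. Note that the paper uses \emph{only} the cycle part of $F$ here; the graph $X$ plays no role in this lemma.

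Your plan works in the opposite direction --- assume no end-square, push an extremal square, propagate, and manufacture a forbidden subgraph --- and this is in principle the contrapositive of the walk argument. But the extremality setup and the ``ring around a hole'' analysis you flag as delicate are genuinely extra work that the paper avoids entirely: the walk argument never needs to exhibit $X$, and it never needs to locate an extremal square. Two smaller points: the fact that no vertex lies in four squares follows from $C_8$-avoidance (the $2\times2$ block has the outer $C_8$ induced), not from $X$-avoidance as you suggest; and your claim that a diagonally-placed square must appear to block the right edge of the rightmost square is off --- by extremality no square sits to its upper-right or lower-right, so the blocking square must in fact share an \emph{edge} (top or bottom) with $S$, which sends your propagation up a vertical column rather than diagonally. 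None of this is fatal, but it shows the case bookkeeping is heavier than you anticipate, whereas the paper's walk sidesteps it completely.
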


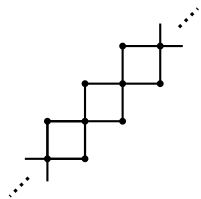
\begin{figure}[!htbp]
\begin{center}
\begin{tikzpicture}[line width=0.7pt,scale=0.5]
\coordinate (O) at (0,0);

\draw[ thick] ([c]O)-- ++(0,1)-- ++(1,0)
-- ++(0,1)-- ++(1,0) -- ++(0,1)-- ++(1,0)
-- ++(0,-1)-- ++(-1,0)-- ++(0,-1)-- ++(-1,0)
-- ++(0,-1)-- ++(-1,0)--++(-0.6,0) ++(0.6,0)
--++(0,-0.6)++(3,4.2)--++(0,-0.6)--++(0.6,0);

\fill[black!100] ([c]O) circle(0.5ex)
         ++(0,1)circle(0.5ex) ++(1,0)circle(0.5ex)
	      ++(0,-1)circle(0.5ex) ++(0,2)circle(0.5ex)++(1,0)circle(0.5ex)
	      ++(0,-1)circle(0.5ex)++(0,2)circle(0.5ex)++(1,0)circle(0.5ex)
	      ++(0,-1)circle(0.5ex);
\draw[ thick] ([c]bln)-- ++(0,1)-- ++(1,0)-- ++(0,-1)--++(-1,0);
\draw[ dotted, very thick] ([c]O)++(-0.5,-0.5)-- ++(-0.5,-0.5);
\draw[ dotted, very thick] ([c]O)++(3.5,3.5)-- ++(0.5,0.5);

\end{tikzpicture}
\caption{Grid graph with no common edges}
\label{fig:nocomedge}
\end{center}
\end{figure}

\begin{proof}
We consider two cases for a square grid graph $G$. If no two squares in $G$ have a common edge, then $G$ must be as in
Figure~\ref{fig:nocomedge}. Clearly, since $G$ is finite, it has an end-square. So, assume that at least two squares in $G$ have a common edge.
We start with one of such squares. If it is an end-square,
we are done. If it is not, we go to the
second square which has a common edge with it. If it is
an end-square, we are done. Otherwise, we go to the third one
which shares an edge with the second square but not the first one, if such a square exists.  If there is no such square, we choose the third square
to be the square that shares a node with the second square but not the first one.
Thus, squares with common edges always come first, and  then squares with common nodes. Continue in this fashion. Figure~\ref{fig:generate} illustrates possible cases  in the $(i+1)$-th step up to symmetry.

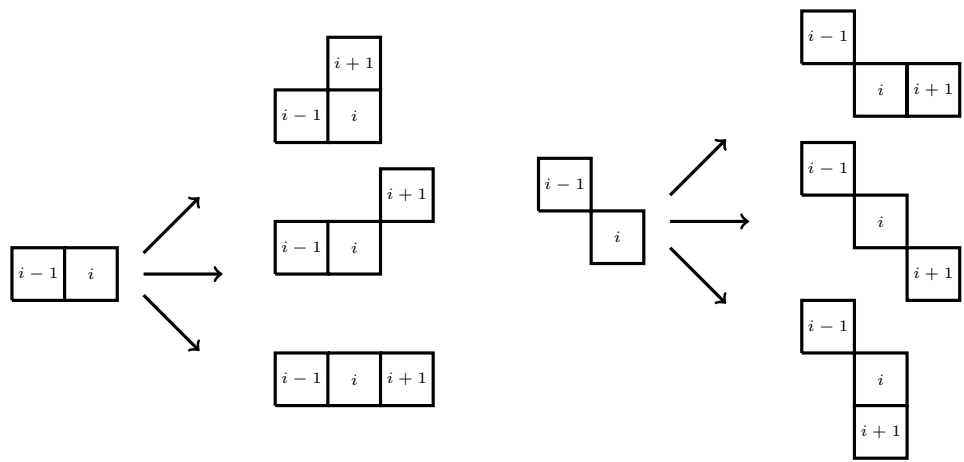
\begin{figure}[!htbp]
\begin{center}
\begin{tikzpicture}[line width=0.7pt,scale=0.7]
\coordinate (O) at (0,0);
\coordinate (B1) at (2.5,0.9);
\coordinate (B2) at (2.5,0.5);
\coordinate (B3) at (2.5,0.1);
\coordinate (C1) at (5,3);
\coordinate (C2) at (5,0.5);
\coordinate (C3) at (5,-2);
\coordinate (D) at (10,1.7);
\coordinate (E1) at (12.5,2);
\coordinate (E2) at (12.5,1.5);
\coordinate (E3) at (12.5,1);
\coordinate (F1) at (15,4.5);
\coordinate (F2) at (15,2);
\coordinate (F3) at (15,-1);

\draw[ very thick] ([c]O)-- ++(0,1)-- ++(1,0)--++(0,-1)--++(-1,0)
++(1,0)--++(1,0)--++(0,1)--++(-1,0);
\path ([c]O)++(0.5,0.5) node {{\tiny $i-1$}} ++(1,0) node {{\tiny $i$}};
\draw  [->, very thick, rotate around={45:(B1)}] (B1)--++(1.5,0);
\draw  [->, very thick] (B2)--++(1.5,0);
\draw  [->, very thick, rotate around={-45:(B1)}] (B3)--++(1.5,0);
\draw[ very thick] ([c]C1)-- ++(0,1)-- ++(1,0)--++(0,-1)--++(-1,0)
++(1,0)--++(1,0)--++(0,1)--++(-1,0)--++(0,1)--++(1,0)--++(0,-1);
\path ([c]C1)++(0.5,0.5) node {{\tiny $i-1$}} ++(1,0) node {{\tiny $i$}}
++(0,1) node {{\tiny $i+1$}};

\draw[ very thick] ([c]C2)-- ++(0,1)-- ++(1,0)--++(0,-1)--++(-1,0)
++(1,0)--++(1,0)--++(0,1)--++(-1,0)++(1,0)--++(1,0)--++(0,1)--++(-1,0)
--++(0,-1);
\path ([c]C2)++(0.5,0.5) node {{\tiny $i-1$}} ++(1,0) node {{\tiny $i$}}
++(1,1) node {{\tiny $i+1$}};

\draw[ very thick] ([c]C3)-- ++(0,1)-- ++(1,0)--++(0,-1)--++(-1,0)
++(1,0)--++(1,0)--++(0,1)--++(-1,0)++(1,0)--++(1,0)--++(0,-1)--++(-1,0);
\path ([c]C3)++(0.5,0.5) node {{\tiny $i-1$}} ++(1,0) node {{\tiny $i$}}
++(1,0) node {{\tiny $i+1$}};
\draw[ very thick] ([c]D)-- ++(0,1)-- ++(1,0)--++(0,-1)--++(-1,0)
++(1,0)--++(1,0)--++(0,-1)--++(-1,0)--++(0,1);
\path ([c]D)++(0.5,0.5) node {{\tiny $i-1$}} ++(1,-1) node {{\tiny $i$}};
\draw  [->, very thick, rotate around={45:(E1)}] (E1)--++(1.5,0);
\draw  [->, very thick] (E2)--++(1.5,0);
\draw  [->, very thick, rotate around={-45:(E3)}] (E3)--++(1.5,0);

\draw[ very thick] ([c]F1)-- ++(0,1)-- ++(1,0)--++(0,-1)--++(-1,0)
++(1,0)--++(1,0)--++(0,-1)--++(-1,0)--++(0,1)
++(1,0)--++(1,0)--++(0,-1)--++(-1,0)--++(0,1);
\path ([c]F1)++(0.5,0.5) node {{\tiny $i-1$}} ++(1,-1) node {{\tiny $i$}}
++(1,0) node {{\tiny $i+1$}};

\draw[ very thick] ([c]F2)-- ++(0,1)-- ++(1,0)--++(0,-1)--++(-1,0)
++(1,0)--++(1,0)--++(0,-1)--++(-1,0)--++(0,1)
++(1,-1)--++(1,0)--++(0,-1)--++(-1,0)--++(0,1);
\path ([c]F2)++(0.5,0.5) node {{\tiny $i-1$}} ++(1,-1) node {{\tiny $i$}}
++(1,-1) node {{\tiny $i+1$}};

\draw[ very thick] ([c]F3)-- ++(0,1)-- ++(1,0)--++(0,-1)--++(-1,0)
++(1,0)--++(1,0)--++(0,-1)--++(-1,0)--++(0,1)
++(1,-1)--++(0,-1)--++(-1,0)--++(0,1);
\path ([c]F3)++(0.5,0.5) node {{\tiny $i-1$}} ++(1,-1) node {{\tiny $i$}}
++(0,-1) node {{\tiny $i+1$}};
\end{tikzpicture}
\caption{Possible cases in the $(i+1)$-th step of searching for an end-square. Squares are labeled according to the step on which they were added.}
\label{fig:generate}
\end{center}
\end{figure}
Since we avoid the cycle graphs $C_{2n}$ for $n\geq 4$, the square labeled by $i+1$ will never be connected to
that labeled by a smaller number. Because the graph is finite,  sooner
or latter we will find an end-square.
\end{proof}

Actually, we can prove the following stronger version of Lemma~\ref{lem:oneendsquare}.

\begin{lem}\label{lem:twoendsquare}
In any $12$-representable square grid graph $G$ with more than
$4$ nodes, there exist exactly two end-squares.
\end{lem}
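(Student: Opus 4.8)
The plan is to exploit Lemma~\ref{lem:oneendsquare} together with a structural argument showing that a $12$-representable square grid graph is, in a suitable sense, ``path-like,'' so that it has precisely two ends. First I would observe that since $G$ is $12$-representable it is $F$-avoiding by Lemma~\ref{lem:F-avoid}, so $G$ contains no $C_{2n}$ with $n\ge 4$ and no copy of $X$; in particular every node of $G$ lies in at most three squares. The first step is to set up the ``square adjacency structure'': form the auxiliary graph $\mathcal{S}(G)$ whose vertices are the squares of $G$, with two squares joined if they share an edge or share a (corner) node. The absence of large even cycles and of $X$ should force $\mathcal{S}(G)$ to be a path: I would argue that if $\mathcal{S}(G)$ contained a vertex of degree $\ge 3$ or a cycle, then tracing the corresponding squares in the grid would either reproduce the configuration $X$ (three squares meeting at a node, or the two ``L'' patterns from Figure~\ref{fig:G123}) or close up into an induced $C_{2n}$ with $n\ge 4$, contradicting $F$-avoidance. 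This is essentially the same case analysis already carried out in the proof of Lemma~\ref{lem:oneendsquare} (see Figure~\ref{fig:generate}), now run without a preferred starting square.

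Once $\mathcal{S}(G)$ is known to be a path $S_1 S_2 \cdots S_k$ (with $k\ge 2$, since $G$ has more than $4$ nodes), the second step is to identify end-squares with the endpoints $S_1$ and $S_k$ of this path. Concretely, I would show: (i) $S_1$ and $S_k$ are end-squares, because the edge of $S_1$ ``opposite'' to its attachment to $S_2$ has both endpoints lying in no other square (here the case check of Figure~\ref{fig:generate} guarantees that the attachment of $S_2$ to $S_1$ leaves an edge of $S_1$ free); and (ii) no interior square $S_j$ with $1<j<k$ is an end-square, because each of the four edges of $S_j$ has at least one endpoint that is a corner of $S_{j-1}$ or of $S_{j+1}$ — this is again read off from the finitely many local pictures in Figure~\ref{fig:generate}. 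Combining (i) and (ii) gives exactly two end-squares. The degenerate ``staircase with no common edges'' graph of Figure~\ref{fig:nocomedge} should be handled as a separate (easy) case, or absorbed by allowing node-sharing adjacencies in $\mathcal{S}(G)$; there the two extreme squares of the staircase are visibly the only end-squares.

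I expect the main obstacle to be step one: rigorously ruling out branching and cycles in $\mathcal{S}(G)$ while keeping the case analysis finite and honest. The subtlety is that ``sharing a node'' is a weaker relation than ``sharing an edge,'' so one must be careful that a node-sharing cycle in $\mathcal{S}(G)$ really does produce an induced even cycle of length $\ge 8$ in $G$ (it does, since each square contributes at least two fresh nodes), and that a degree-$3$ vertex of $\mathcal{S}(G)$ really forces an induced $X$ rather than something smaller; the graphs $G_1$ and $G_2$ of Figure~\ref{fig:G123} are exactly the minimal witnesses and should be invoked here. A clean way to organize this is to prove the following intermediate claim and then quote it: \emph{in an $F$-avoiding square grid graph, every square shares an edge or a node with at most two other squares, and the ``shares a node with'' relation never creates a cycle,} after which $\mathcal{S}(G)$ is a disjoint union of paths, and connectedness of $G$ forces a single path. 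Everything after that is the bookkeeping of steps (i)–(ii) above, which is routine given Figure~\ref{fig:generate}.
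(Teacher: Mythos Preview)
Your structural approach via the auxiliary ``square adjacency'' graph $\mathcal{S}(G)$ is appealing, and it is genuinely different from what the paper does: the paper never proves that $\mathcal{S}(G)$ is a path. For the lower bound (at least two end-squares) the paper simply reruns the search procedure of Lemma~\ref{lem:oneendsquare} starting from the first end-square found. For the upper bound (at most two) the paper argues directly with labelings: assuming three end-squares as in Figure~\ref{fig:3endsquare}, Lemma~\ref{lem:posi-1} forces the label~$1$ into one of them, and then two applications of the cutset Lemma~\ref{lem:cutset} (with cutsets $\{x_3,x_4\}$ and $\{x_5,x_6\}$) give contradictory inequalities between the labels of the other two end-squares. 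No structure theorem for $\mathcal{S}(G)$ is needed.

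More importantly, your key structural claim is false as stated. Take the L-shape of three unit squares meeting at a single node (the configuration underlying Figure~\ref{fig:1three}, or equivalently the base case in Figure~\ref{fig:corner}): call the corner square $A$ and the two arm squares $B$ and $C$. Then $A$ shares an edge with each of $B,C$, while $B$ and $C$ share only the central node, so $\mathcal{S}(G)$ is the triangle $A\!-\!B\!-\!C$, not a path. This graph is $F$-avoiding (indeed $12$-representable), so your assertion that a cycle in $\mathcal{S}(G)$ must trace out an induced $C_{2n}$ with $n\ge 4$ in $G$ fails here: the three squares close up around a degree-$4$ node and produce no long induced cycle. Likewise your intermediate claim that the ``shares a node with'' relation never creates a cycle is violated by this example. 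Note also that in this graph the two end-squares are $B$ and $C$, while $A$ (the degree-$2$ vertex you would want to call ``interior'') is not an end-square, so even the bookkeeping of your step~(ii) does not survive once $\mathcal{S}(G)$ fails to be a path.

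The approach is not hopeless---one can refine the adjacency relation (for instance, following the priority rule ``edge-sharing first, then node-sharing'' implicit in the proof of Lemma~\ref{lem:oneendsquare}) to obtain a genuine linear order on the squares---but this requires real additional work and a more delicate case analysis than you indicate. The paper's cutset argument sidesteps all of this.
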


\begin{proof}
Firstly, we show that there exist at least two end-squares in $G$.
This can be done essentially by repeating the arguments in the proof of Lemma~\ref{lem:oneendsquare}. Begin with considering the square labeled by $1$. If it is an end-square, then there exists a square different from $1$ connected to it since $G$ has more than $4$ nodes.  Repeating the entire argument given in Lemma~\ref{lem:oneendsquare},
we can  find another end-square labeled by, say, $t$. If the square labeled by $1$ is not an end-square,
then there exists another square different from square $2$ connected to it, which we label by $t+1$. Repeating the entire argument given in Lemma~\ref{lem:oneendsquare} again, we obtain another end-square.

Next, we will show that the number of end-squares is no more than
two. Indeed, assume that $G$ has at least three end-squares, which implies that the number of nodes in $G$ is more than 12 as no two of end-squares can share a node to be $F$-avoiding; see Figure~\ref{fig:3endsquare} for a schematic representation of $G$. 

\begin{figure}[!htbp]
\begin{center}
\begin{tikzpicture}[line width=0.7pt,scale=0.7]
\coordinate (O) at (0,0);
\draw ([c]O) circle (1.5);
\draw [thick] (-0.5,1.4)--++(0,0.7)--++(1,0)--++(0,-0.7);
\draw [thick] (-1.07,-1.07)--++(-0.66,-0.45)--++(-0.53,0.87)--++(0.76,0.42);
\draw [thick] (1.07,-1.07)--++(0.66,-0.45)--++(0.53,0.87)--++(-0.76,0.42);
\fill[black!100] (-0.5,1.4) circle(0.5ex) ++(0,0.7)circle(0.5ex)
++(1,0)circle(0.5ex) ++(0,-0.7)circle(0.5ex);
\fill[black!100] (-1.07,-1.07) circle(0.5ex)++(-0.66,-0.45)circle(0.5ex)
++(-0.53,0.87)circle(0.5ex) ++(0.76,0.42)circle(0.5ex);
\fill[black!100] (1.07,-1.07) circle(0.5ex)++(0.66,-0.45)circle(0.5ex)
++(0.53,0.87)circle(0.5ex) ++(-0.76,0.42)circle(0.5ex);
\path (-0.45,1) node {$x_1$} ++(1,0) node {$x_2$}
++(0,1.5) node {$b$} ++(-1,0) node {$a$};
\path (0.7,-0.85) node {$x_4$} ++(0.4,0.75) node {$x_3$}
++(1.4,-0.7) node {$c$} ++(-0.55,-0.87) node {$d$};
\path (-0.7,-0.85) node {$x_5$} ++(-0.4,0.75) node {$x_6$}
++(-1.5,-0.7) node {$f$} ++(0.55,-0.87) node {$e$};
\end{tikzpicture}
\caption{Square grid graph with at least three end-squares}
\label{fig:3endsquare}
\end{center}
\end{figure}
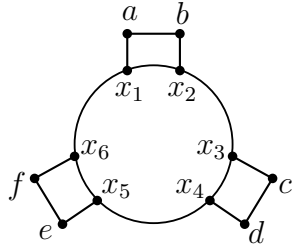

%

By Lemma \ref{lem:posi-1},  $1$ must be the label of a node of an end-square. W.l.o.g. assume that $1\in\{a,b,x_1,x_2\}$. Viewing $\{x_3,x_4\}$ as a cutset, we deduce that
$\{c,d\}>\{e,f\}$ by Lemma \ref{lem:cutset}. On the other hand, if we choose  $\{x_5,x_6\}$ as a cutset, we obtain that $\{c,d\}<\{e,f\}$, a contradiction. Thus, there is no place for the label $1$ in $G$ assuming $G$ has at least three end-squares, so $G$ must have at most two squares.
\end{proof}

Let $u=u_1 \cdots u_j \in \{1, \ldots, n\}^*$ and $\red(u)=u$.
Let the \emph{reverse} of $u$ be the word $u^r=u_j u_{j-1}\cdots u_1$
and the \emph{complement} of $u$ be the word $u^c=(n+1-u_1) \cdots (n+1-u_j)$.
Jones et al.~\cite{JKPR15} introduced the definition
of the \emph{supplement} of a graph. Given a graph $G=(\{1, \ldots, n\},E)$, let the supplement of $G$ be defined by $\overline{G}=(V,\overline{E})$
where for all $x,y \in V$, $xy \in E$ if and only if $n+1-x$ is adjacent to $n+1-y$ in $\overline{E}$. One can think of the supplement of $G=(V,E)$
as a relabeling of $G$ by replacing each label $x$ by the label $n+1-x$.

The following observation can be obtained by combining Observations 2 and 3 in \cite{JKPR15}.

\begin{obs}\label{obs:supplement}
Let $G=(V,E)$ be a $12$-representable graph and $w$ be a $12$-representant of $G$.
Then, $\overline{G}$ is also $12$-representable with a $12$-representant $(w^{r})^c$.
\end{obs}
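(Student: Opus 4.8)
The plan is to verify that the word $(w^r)^c$ actually $12$-represents the supplement graph $\overline{G}$, using the definitions carefully rather than invoking the cited observations as a black box. First I would recall what the two operations do at the level of words: if $w$ $12$-represents $G=(V,E)$ on the label set $\{1,\dots,n\}$, then for any two labels $x,y$ we have $xy\notin E$ iff $w_{\{x,y\}}$ contains the pattern $12$, i.e.\ iff the first occurrence of $\min(x,y)$ precedes the first occurrence of $\max(x,y)$ in $w$. (Here it is harmless to assume, by Theorem~\ref{at-most-twice-occur}, that each letter occurs at most twice, though the argument does not even need this.) The claim is that $(w^r)^c$ represents the graph obtained from $G$ by the relabeling $x\mapsto n+1-x$, which is exactly $\overline{G}$ by its definition.

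The key steps, in order, are: (1) Observe that the reversal $w\mapsto w^r$ converts the pattern $12$ into the pattern $21$, so that $w^r_{\{x,y\}}$ contains $12$ iff $w_{\{x,y\}}$ contains $21$; since a two-letter word over a two-element alphabet always contains $12$ or $21$ but not necessarily only one, I would be slightly more careful and phrase it as: $w^r$ $21$-represents $G$, meaning $xy\notin E$ iff $w^r_{\{x,y\}}$ contains $21$. Actually the cleanest route is to track the complement first: (1') the complement $u\mapsto u^c$ on the alphabet $\{1,\dots,n\}$ reverses all order relations among letters, hence it turns occurrences of the pattern $12$ into occurrences of the pattern $21$ while simultaneously relabeling each letter $z$ as $n+1-z$. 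So $w^c$, read on the new labels $n+1-x$, has the property that $(w^c)_{\{n+1-x,\,n+1-y\}}$ contains $21$ iff $w_{\{x,y\}}$ contains $12$, i.e.\ iff $xy\notin E$. (2) Then reversal $u\mapsto u^r$ turns occurrences of $21$ back into occurrences of $12$ without touching the labels. (3) Composing, $(w^r)^c = (w^c)^r$ has the property that for the relabeled vertices $x'=n+1-x$, $y'=n+1-y$, the restriction $((w^r)^c)_{\{x',y'\}}$ contains $12$ iff $w_{\{x,y\}}$ contains $12$ iff $xy\notin E$, which by definition of $\overline G$ is iff $x'y'\notin\overline E$. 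That is precisely the statement that $(w^r)^c$ $12$-represents $\overline G$, and since such a word exists, $\overline G$ is $12$-representable.

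One technical point worth checking explicitly is that $(w^r)^c$ and $(w^c)^r$ coincide, which is immediate since both reversal and complement act letterwise/positionwise and commute. Another is that $A((w^r)^c)=\{1,\dots,n\}=V(\overline G)$, so the word is a legitimate representant; this is clear because complementation is a bijection of the alphabet and reversal preserves the alphabet. I would also note that the pattern $12$ is its own "length-$2$ consecutive-or-not" pattern, so there is no subtlety about the consecutive-pattern versus classical-pattern distinction here (as already remarked in the introduction of the paper).

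I do not expect a genuine obstacle: the whole content is bookkeeping about how $\red$, reversal and complement interact with containment of the two-letter pattern $12$. The only place to be mildly careful is the direction of the iff's and making sure the relabeling bookkeeping in the definition of $\overline G$ (the map $x\mapsto n+1-x$) is matched up with the label shift induced by $u\mapsto u^c$; getting those aligned is the ``hard part,'' but it is purely formal. Alternatively, one can simply cite Observations~2 and~3 of \cite{JKPR15} (reversal swaps $12$ and $21$; complement likewise, together with relabeling) and remark that their composition yields exactly the claim, which is the one-line proof the authors most likely intend.
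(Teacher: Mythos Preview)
Your proposal is correct and, as you anticipate in your final sentence, the paper gives no argument at all beyond ``combining Observations 2 and 3 in \cite{JKPR15}''; your unpacking of how reversal and complement interact with the pattern $12$ is exactly what those observations encode. One small expository slip: the parenthetical ``i.e.\ iff the first occurrence of $\min(x,y)$ precedes the first occurrence of $\max(x,y)$'' is not an equivalent reformulation of ``$w_{\{x,y\}}$ contains the pattern $12$'' (e.g.\ $w=2121$), but you never actually use that characterization, so the main argument in steps (1$'$)--(3) is unaffected.
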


\begin{coro}\label{coro:13nn-2}
Let $G=(V,E)$ be a $12$-representable square grid graph with $|V|=n$. Then, there exists a labeled graph $G'$, which realizes $12$-representability of $G$, with
one end-square given in Figure~\ref{fig:13} and the other end-square
given in Figure~\ref{fig:nn-2}.
\end{coro}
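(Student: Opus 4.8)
The plan is to normalize the two end-squares one at a time, using the supplement operation to turn a normalization of the shape of Figure~\ref{fig:13} into one of the shape of Figure~\ref{fig:nn-2}. By Lemma~\ref{lem:twoendsquare}, $G$ has exactly two end-squares, which I will call $S_1$ and $S_2$. By Proposition~\ref{lem:13} we may first pass to a labeled copy $G_1$ of $G$ realizing $12$-representability in which $S_1$ is an end-square as in Figure~\ref{fig:13}; by Corollary~\ref{coro:represntant13} we may moreover take a $12$-representant of $G_1$ of the form $w=3w_11w_22w_3$ with every letter of $w_1,w_2,w_3$ at least $4$.

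Next I would pass to the supplement. By Observation~\ref{obs:supplement}, $\overline{G_1}$ is $12$-representable, with $12$-representant $(w^r)^c$. Since passing to the supplement is exactly the relabeling $x\mapsto n+1-x$, it sends the three distinguished labels $1,2,3$ of $S_1$ to $n,n-1,n-2$ respectively; hence in $\overline{G_1}$ the end-square $S_1$ has the shape of Figure~\ref{fig:nn-2}, its free edge being labeled $\{n,n-2\}$ and the square-neighbour of the vertex labeled $n-2$ (other than the one labeled $n$) being labeled $n-1$.

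Now I would apply Proposition~\ref{lem:13} once more, this time to the $12$-representable graph $\overline{G_1}$, obtaining a labeled copy $G_2$ of $\overline{G_1}$ realizing $12$-representability with some end-square in the form of Figure~\ref{fig:13}. The crucial point is that this end-square must be $S_2$, not $S_1$. Indeed, the labeling of $\overline{G_1}$ is good (every $12$-representable labeling is), so, as in the proof of Proposition~\ref{lem:13}, the labels $1,2,3$ sit on the end-square containing label $1$ in one of the two configurations of Figures~\ref{fig:32} and~\ref{fig:13}; in particular labels $2$ and $3$ both lie on that end-square. If it were $S_1$, then $\{2,3\}\subseteq\{n,n-1,n-2,n+1-\ell\}$, where $\ell\ge4$ is the label of the fourth vertex of $S_1$ in $G_1$, which is impossible for $n>4$. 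Hence it is $S_2$. Moreover, again by the proof of Proposition~\ref{lem:13}, the relabeling taking $\overline{G_1}$ to $G_2$ is either the identity or the transposition of the labels $1$ and $2$; since $n-2,n-1,n$ are all distinct from $1$ and $2$ when $n>4$, this relabeling fixes the three distinguished labels of $S_1$, so $S_1$ still has the shape of Figure~\ref{fig:nn-2} in $G_2$. Thus $G_2$ realizes $12$-representability of $G$ with $S_2$ as in Figure~\ref{fig:13} and $S_1$ as in Figure~\ref{fig:nn-2}.

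Finally I would apply Observation~\ref{obs:supplement} a second time: passing to the supplement is an involution that interchanges the shapes of Figures~\ref{fig:13} and~\ref{fig:nn-2} (it swaps the labels $i$ and $n+1-i$), so $G':=\overline{G_2}$ realizes $12$-representability of $G$ with $S_1$ an end-square as in Figure~\ref{fig:13} and $S_2$ an end-square as in Figure~\ref{fig:nn-2}, as required. The step I expect to be most delicate is reapplying Proposition~\ref{lem:13} to $\overline{G_1}$: one must be sure that the ``two configurations'' dichotomy for the end-square carrying label $1$, together with the fact that the accompanying relabeling only transposes $1$ and $2$, are genuinely available and not merely the bare existence statement of Proposition~\ref{lem:13}. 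If a cleaner justification is wanted, one can instead verify directly, via Lemma~\ref{lem:cutset} applied with a cutset separating $S_1$ from the rest of $G_1$, that the largest label $n$ cannot occur on $S_1$, which already forces label $1$ of $\overline{G_1}$ onto $S_2$; the few small cases (essentially $|V|=6$) are then checked by hand.
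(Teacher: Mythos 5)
Your proposal is correct and takes essentially the same route as the paper: normalize one end-square via Proposition~\ref{lem:13}, apply the supplement (Observation~\ref{obs:supplement}) so that it takes the form of Figure~\ref{fig:nn-2}, then use Lemma~\ref{lem:twoendsquare} together with the $1\leftrightarrow 2$ relabeling from the proof of Proposition~\ref{lem:13} (which leaves the labels $n-2,n-1,n$ untouched) to bring the other end-square to the form of Figure~\ref{fig:13}. Your final second supplement is redundant, since $G_2$ already has one end-square of each required form, and your explicit verification that the label $1$ (together with $2$ and $3$) must land on the second end-square is simply a more careful unpacking of what the paper's proof leaves implicit.
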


\begin{proof} By Proposition~\ref{lem:13}, there exists a labeled copy $G'$ of $G$, which realizes $12$-representability of $G$ with one end-square given in Figure \ref{fig:13}. By Observation~\ref{obs:supplement}, we can relabel $G$ so that  it remains $12$-representable with an end square being as in in Figure~\ref{fig:nn-2}. By Lemma~\ref{lem:twoendsquare}, there is another end-square in $G$, and by Lemma~\ref{lem:posi-1} 1 must the label of one of its nodes. If this endpoint as in Figure~\ref{fig:13}, we are done. Otherwise, it must be like in Figure~\ref{fig:32}, and the arguments in the proof of Proposition~\ref{lem:13} can be applied to relabel the square of the form in Figure~\ref{fig:32} into that in Figure~\ref{fig:13}. We are done. \end{proof}

\begin{figure}[!htbp]
\begin{center}
\begin{tikzpicture}[line width=0.7pt,scale=0.7]
\coordinate (O) at (0,0);
\draw[ thick] ([c]O)-- ++(0,1)-- ++(1,0)-- ++(0,-1)--++(-1,0);
\draw   (O)++(0,0.5) ellipse (0.6 and 1.5);
\path (O)++(0,1.4) node {{\tiny $n-1$}}
++(1.2,0) node  {{\tiny $n-2$}}
++(-0.1,-1.8) node  {{\tiny $n$}};
\fill[black!100] ([c]O) circle(0.5ex)
         ++(0,1)circle(0.5ex) ++(1,0)circle(0.5ex)
	      ++(0,-1)circle(0.5ex);
\end{tikzpicture}
\caption{An end-square with neither $n$ nor $n-2$ being a node of another square}
\label{fig:nn-2}
\end{center}
\end{figure}
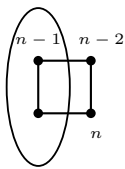

Our main result in this section is the following characterization theorem.

\begin{thm}\label{thm:F=12re}
A square grid graph is $F$-avoiding if and only if it is $12$-representable.
\end{thm}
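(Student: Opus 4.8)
The statement to prove is Theorem~\ref{thm:F=12re}: a square grid graph is $F$-avoiding if and only if it is $12$-representable. One direction is already in hand: Lemma~\ref{lem:F-avoid} tells us that every $12$-representable square grid graph is $F$-avoiding. So the plan is to establish the converse, namely that every $F$-avoiding square grid graph $G$ is $12$-representable. I would proceed by induction on the number of squares (equivalently, on $|V|$), using the extension lemma (Lemma~\ref{lem:extend}) as the inductive engine.

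\textbf{Base case and the inductive step.}
For the base case, recall from the start of Section~\ref{sec:square} that the single square $C_4$ is $12$-represented by $3412$, so any $F$-avoiding square grid graph with at most $4$ nodes is $12$-representable. For the inductive step, let $G$ be an $F$-avoiding square grid graph with more than $4$ nodes. By Lemma~\ref{lem:oneendsquare}, $G$ has an end-square $S$; in fact, by Lemma~\ref{lem:twoendsquare} it has exactly two, but one end-square is all we need here. Let $G^-$ be the graph obtained from $G$ by deleting the two ``outer'' nodes of $S$ (the two nodes of $S$ that, together with the defining edge $ab$ of the end-square, are not corner points of any other square) together with the incident edges; $G^-$ is again a square grid graph, it is still $F$-avoiding since $F$-avoidance is hereditary (induced subgraphs of $F$-avoiding graphs are $F$-avoiding, as $I$-avoidance of forbidden induced subgraphs always is), and it has strictly fewer nodes, so by the inductive hypothesis $G^-$ is $12$-representable. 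The crux is then to recognize that $G$ is obtained from $G^-$ by one of the five extension operations catalogued in Figure~\ref{fig:extend1}, at which point Lemma~\ref{lem:extend} finishes the job.

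\textbf{The main obstacle: the case analysis for reattaching the end-square.}
The heart of the argument — and the step I expect to be the real work — is verifying that reattaching $S$ to $G^-$ is always one of the five moves of Lemma~\ref{lem:extend}, and never any forbidden configuration. One must enumerate, up to symmetry, all ways an end-square $S$ can be glued onto $G^-$: the shared part of $S$ with the rest of $G$ can be a single edge or a single corner node (it cannot be more, by the end-square definition), and one must track how many squares of $G^-$ meet that shared edge or node. Here the $F$-avoidance hypothesis does the decisive pruning: because $G$ contains no copy of $X$ (Figure~\ref{fig:corneradd1}) and no long even cycle, the gluing patterns that would create an $X$ or a $C_{2n}$ are excluded — in particular the shared edge of $S$ cannot already belong to two other squares, and attachments that would close up a forbidden cycle are impossible. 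What remains after this pruning should be exactly the five cases of Figure~\ref{fig:extend1} (together with their symmetric images, which are handled by Observation~\ref{obs:supplement} and the reverse/complement symmetries). I would organize this enumeration by the local picture around the shared edge/vertex of $S$, check in each surviving case that the only new edges introduced are the ones the extension lemma allows (no accidental extra adjacencies — this is where one must be careful, since the grid geometry can force unintended edges), and then invoke Lemma~\ref{lem:extend} to produce the $12$-representant of $G$ from that of $G^-$. Combining both directions yields the equivalence, completing the proof of Theorem~\ref{thm:F=12re}.
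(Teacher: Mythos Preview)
Your overall strategy matches the paper's proof exactly: induct on the number of squares, peel off an end-square, apply the hypothesis to the smaller graph, and reattach via Lemma~\ref{lem:extend}. Two points need tightening, however.

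First, ``delete the two outer nodes of $S$'' is not always correct. When the end-square $S$ meets the rest of $G$ in a single corner (a possibility your own case analysis later acknowledges), three of its four vertices lie in no other square; removing only two of them leaves a pendant edge, so $G^-$ is no longer a square grid graph and the induction hypothesis does not apply. You must instead remove every vertex of $S$ that belongs to no other square---two or three vertices according to the case---which is what the paper means by ``removing $S_1$''.

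Second, Lemma~\ref{lem:extend} is not a statement about arbitrary $12$-representable $G^-$: it presupposes that the end-square of $G^-$ to which the new square is attached carries the specific $1,3,2$ labeling of Figure~\ref{fig:13}. So before invoking it you must (a) verify that the square $S_2$ adjacent to $S$ actually becomes an end-square of $G^-$---the paper does this via the five local configurations in Figure~\ref{fig:5cases}, using $F$-avoidance---and (b) arrange that this particular end-square carries the required labeling. Step~(b) is exactly what Corollary~\ref{coro:13nn-2} together with Observation~\ref{obs:supplement} supplies: the corollary labels the two end-squares of $G^-$ as in Figures~\ref{fig:13} and~\ref{fig:nn-2}, and if $S_2$ received the latter, passing to the supplement swaps them. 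Your sketch gestures at the supplement symmetry but omits the appeal to Corollary~\ref{coro:13nn-2}; without it there is no guarantee that $S_2$ can be labeled in the form Lemma~\ref{lem:extend} requires.
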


\begin{proof}
Following from  Oberservation \ref{obs:induced} and Lemma \ref{lem:F-avoid}, it is easy to check that $12$-representable square grid graph is $F$-avoiding.
In the following, we proceed to  show that  an $F$-avoiding square grid graph is $12$-representable by induction on the number of squares.

Firstly, if a square grid graph $G$ has at most two squares, which is certainly $F$-avoiding, then it can be easily checked that $G$ is $12$-representable as $G$ is then either a square, or a two-square ladder graph, or two squares joint in a node. So, we assume that any $F$-avoiding square grid graph with $n$, $n>2$, squares
is $12$-representable. We wish to show that this still holds for an $F$-avoiding square grid graph $G$ with $n+1$ squares.

Since $G$ is $F$-avoiding, it must contain an end-square $S_1$ by  Lemma~\ref{lem:oneendsquare}. Let $G'$ be a graph obtained
form $G$ by removing $S_1$. Clearly, $G'$ is $F$-avoiding, which implies that $G'$ is $12$-representable by the induction hypothesis.
We claim that removing $S_1$ from $G$ produces a new end-suquare $S_2$ which is connected to $S_1$.  Since $n>2$,  the number of nodes of $G$ is not less than $8$.  We see that $S_2$ cannot be an end-square of $G$
since $G$ is connected. Then, it is easy to check this
claim through the following five cases up to symmetry. We omit the details here.

\begin{figure}[!htbp]
\begin{center}
\begin{tikzpicture}[line width=0.7pt,scale=0.7]
\coordinate (O) at (0,0);
\coordinate (B) at (3.5,0);
\coordinate (C) at (8,0);
\coordinate (D) at (11,0);
\coordinate (E) at (14.5,0);

\draw [thick] (O)--++(2,0)--++(0,2)--++(-1,0)--++(0,-1)--++(1,0)
++(-1,0)--++(-1,0)--++(0,-1)++(1,0)--++(0,1);
\fill[black!100] (O) circle(0.5ex) ++(1,0) circle(0.5ex)
++(1,0) circle(0.5ex) ++(0,1) circle(0.5ex) ++(-1,0) circle(0.5ex)
++(-1,0) circle(0.5ex)++(1,1)circle(0.5ex)++(1,0) circle(0.5ex);
\path (1.5,0.5) node {$S_2$}
++(0,1) node {$S_1$};
\draw [thick] (B)--++(2,0)--++(0,1)--++(-2,0)--++(0,-1)
++(1,0)--++(0,1)++(1,0)--++(1,0)--++(0,1)--++(-1,0)--++(0,-1);
\fill[black!100] (B) circle(0.5ex) ++(1,0) circle(0.5ex)
++(1,0) circle(0.5ex) ++(0,1) circle(0.5ex)
++(-1,0) circle(0.5ex)++(-1,0) circle(0.5ex)
++(3,0)circle(0.5ex) ++(0,1)circle(0.5ex) ++(-1,0)circle(0.5ex);
\path (B)++(1.5,0.5) node {$S_2$}
++(1,1) node {$S_1$};
\draw [thick] (C)--++(1,0)--++(0,3)--++(-1,0)--++(0,-3)
++(0,1)--++(1,0)++(0,1)--++(-1,0);
\fill[black!100] (C) circle(0.5ex)++(1,0)circle(0.5ex)
++(0,1)circle(0.5ex)++(0,1)circle(0.5ex)++(0,1)circle(0.5ex)
++(-1,0)circle(0.5ex) ++(0,-1)circle(0.5ex)++(0,-1)circle(0.5ex)++(0,-1)circle(0.5ex);
\path (C)++(0.5,1.5) node {$S_2$}
++(0,1) node {$S_1$};
\draw [thick] (D)--++(1,0)--++(0,1)--++(-1,0)--++(0,-1)++(1,1)
--++(1,0)--++(0,2)--++(-1,0)
--++(0,-2)++(0,1)--++(1,0);
\fill[black!100] (D) circle(0.5ex)++(1,0)circle(0.5ex)
++(0,1)circle(0.5ex)++(0,1)circle(0.5ex)++(0,1)circle(0.5ex)
++(1,0)circle(0.5ex) ++(0,-1)circle(0.5ex)++(0,-1)circle(0.5ex)++(-2,0)circle(0.5ex);
\path (D)++(1.5,1.5) node {$S_2$}
++(0,1) node {$S_1$};
\draw [thick] (E)--++(1,0)--++(0,1)--++(1,0)--++(0,1)
--++(1,0)--++(0,1)--++(-1,0)--++(0,-1)--++(-1,0)--++(0,-1)
--++(-1,0)--++(0,-1);
\fill[black!100] (E) circle(0.5ex)++(1,0)circle(0.5ex)
++(-1,1)circle(0.5ex)++(1,0)circle(0.5ex)++(1,0)circle(0.5ex)
++(-1,1)circle(0.5ex)++(1,0)circle(0.5ex)++(1,0)circle(0.5ex)
++(-1,1)circle(0.5ex)++(1,0)circle(0.5ex);
\path (E)++(1.5,1.5) node {$S_2$}
++(1,1) node {$S_1$};
\end{tikzpicture}
\caption{Five cases up to symmetry when removing an end-square of $G$ }
\label{fig:5cases}
\end{center}
\end{figure}
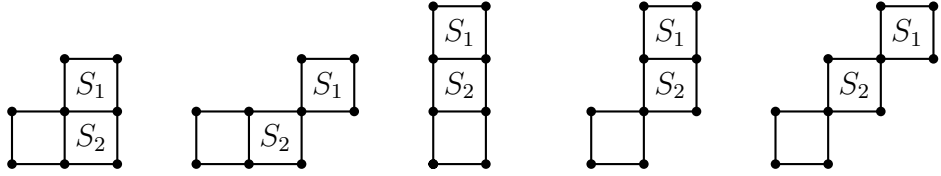
By Corollary \ref{coro:13nn-2}, there exists a labeling with
square $S_2$ being labeled as given in Figure~\ref{fig:13}
or Figure~\ref{fig:nn-2}. Viewing Obeservation \ref{obs:supplement},
we can always assume that $S_2$ is labeled as given in Figure \ref{fig:13}, otherwise, we will take its supplement to obtain a
labeling of $S_2$ as in Figure \ref{fig:13}. Thus, by Lemma \ref{lem:extend}, we can simply find a labeling of $G$, as well as its $12$-representant. This shows that
$G$ is $12$-representable and the proof is completed.
\end{proof}

\begin{remark} We note that Theorem~\ref{thm:F=12re} is still true if in $F$ the graph $X$ will be replaced by the square grid graphs $G_1$ and $G_2$ in Figure~\ref{fig:G123}. However, such a change would not result in a set of {\em minimal} forbidden induced subgraphs.
\end{remark}

By Theorem~\ref{thm:F=12re},  if a square grid graph is
$F$-avoidng, then it is 12-representable, and thus it has a good labeling. On the other hand, by Lemma~\ref{lem:F-avoid} we know that
if a square grid graph has a good labeling, then it is $F$-avoiding, and thus it is 12-representable by Theorem~\ref{thm:F=12re}. This leads us to the following corollary, which is still an open question in the case of arbitrary (not necessarily grid) graphs.

\begin{coro}\label{coro:googlabeling-12}
A square grid graph has a good labeling if and only if it is $12$-representable.
\end{coro}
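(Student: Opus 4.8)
The plan is to prove, for a square grid graph $G$, the cyclic chain of implications
\[
G \text{ is $12$-representable}\ \Longrightarrow\ G \text{ has a good labeling}\ \Longrightarrow\ G \text{ is $F$-avoiding}\ \Longrightarrow\ G \text{ is $12$-representable},
\]
in which the last arrow is Theorem~\ref{thm:F=12re}, and the first two I describe below.

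The first implication holds for arbitrary graphs. Fix a labeled copy $G'$ of $G$ realizing its $12$-representability. If $G'$ contained an induced subgraph whose reduced form is one of $I_3$, $J_4$, $Q_4$, then $G'$ would fail to be $12$-representable by Lemma~\ref{lem:IJQ}, contrary to the choice of $G'$; hence the labeling of $G'$ is good, and so $G$ has a good labeling.

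For the second implication I would first observe that having a good labeling is hereditary: restricting a good labeling of $G$ to an induced subgraph $H$ and reducing gives a good labeling of $H$, since an induced subgraph of $H$ with forbidden reduced form is again an induced subgraph of $G$. Hence it suffices to check that no member of $F$ has a good labeling. For $X$ this is precisely what is established inside the proof of Lemma~\ref{lem:F-avoid}. For $C_{2n}$ with $n\ge 4$ I would argue directly from the $J_4$/$Q_4$ constraints: assign to each edge $e=\{x,y\}$ its \emph{label-interval}, the set of integers between $\min$ and $\max$ of the labels of $x$ and $y$; then in a good labeling any two vertex-disjoint, mutually non-adjacent edges have disjoint label-intervals, for otherwise their four endpoints would induce $J_4$ or $Q_4$ in reduced form. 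Now consider the vertex $u$ carrying the label $2n$ and the vertex $v$ carrying the label $2n-1$, and split on whether $u$ and $v$ are adjacent: when they are non-adjacent, one of the two edges at $u$ and one of the two edges at $v$ turn out to be vertex-disjoint and non-adjacent (this uses $2n\ge 8$, so that in the cycle of edges one can always find such a pair at distance at least $3$) while both of their label-intervals contain $2n-1$, a contradiction; when they are adjacent, the two ``outer'' edges incident to $u$ and $v$ respectively force a pair of contradictory inequalities among the labels just outside $\{u,v\}$. Either way $C_{2n}$ has no good labeling, and chaining the three implications with Theorem~\ref{thm:F=12re} proves the corollary.

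The step I expect to be the genuine obstacle is exactly the verification that $C_{2n}$ ($n\ge 4$) has no good labeling. Lemma~\ref{lem:F-avoid} and its proof only deliver ``$C_{2n}$ is not $12$-representable'' for the cycles, via Theorem~\ref{thm:cycle}; and, as stressed immediately before the corollary, ``not $12$-representable'' does not in general imply ``has no good labeling''. So one cannot simply invoke Lemma~\ref{lem:F-avoid} here, and the non-existence of a good labeling of $C_{2n}$ has to be proved separately, as sketched above. The remaining parts are routine bookkeeping with Lemma~\ref{lem:IJQ} and Theorem~\ref{thm:F=12re}.
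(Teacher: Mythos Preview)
Your cyclic chain is exactly the paper's argument (the paragraph immediately preceding the corollary): it runs $12$-representable $\Rightarrow$ good labeling via Lemma~\ref{lem:IJQ}, good labeling $\Rightarrow$ $F$-avoiding ``by Lemma~\ref{lem:F-avoid}'', and back via Theorem~\ref{thm:F=12re}. You are more scrupulous than the paper on the middle arrow: the paper simply cites Lemma~\ref{lem:F-avoid}, whose proof establishes that $X$ has no good labeling but handles the cycles only through Theorem~\ref{thm:cycle}; your label-interval argument for $C_{2n}$ fills this in explicitly and is correct (in the adjacent case the contradictory inequalities are $\ell(u')<\ell(v')$ and $\ell(v')<\ell(u')$, obtained by comparing $\{u,u'\}$ against $\{v',v''\}$ and $\{v,v'\}$ against $\{u',u''\}$ respectively).
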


In fact, we can prove a stronger version of Corollary~\ref{coro:googlabeling-12}, which is again an open question in the case of arbitrary graphs (no counter-example to this statement is known).

\begin{thm}\label{thm:anygoodlabeling-12}
For any good labeling of a square grid graph $G$, there exists a
word $w$ $12$-representing $G$.
\end{thm}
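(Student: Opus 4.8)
The plan is to prove Theorem~\ref{thm:anygoodlabeling-12} by strong induction on the number of squares, mirroring the structure of the proof of Theorem~\ref{thm:F=12re} but tracking the \emph{given} labeling rather than choosing a convenient one. The base case (at most two squares) is a finite check: a square, a two-square ladder, or two squares sharing a node, each of whose good labelings can be exhibited to be $12$-representable directly. For the inductive step, suppose $G$ is a square grid graph with $n+1$ squares equipped with a good labeling $\ell$. Since a good labeling forbids $I_3,J_4,Q_4$ in reduced form, $G$ is in particular $F$-avoiding (a good labeling forbids the $C_{2n}$'s by Theorem~\ref{thm:cycle}, and $X$ is excluded because by Lemma~\ref{lem:F-avoid} it has no good labeling at all), so by Lemma~\ref{lem:oneendsquare} $G$ has an end-square; as in the proof of Theorem~\ref{thm:F=12re}, pick an end-square $S_1$ whose removal leaves a new end-square $S_2$ adjacent to it. Let $G'=G\setminus S_1$, with the induced labeling $\ell'$, which is still good since goodness is hereditary.

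The crux is to show that for the given good labeling $\ell$, the node of $G$ carrying label $1$ lies in an end-square, and moreover that one can reduce to the situation analyzed in Lemma~\ref{lem:extend}. First I would re-run the case analysis of Lemma~\ref{lem:posi-1}: that proof only used Lemma~\ref{lem:cutset}, which in turn only needs $12$-representability, so I need a version of the cutset lemma valid for an arbitrary good labeling. The natural route is to prove directly that if $\ell$ is a good labeling of $G$ and $U$ is a cutset with components $G_1,G_2$ on the two sides, $|V_i|\ge 2$, and $\min(V_1\cup V_2)\in V_1$, then $V_1<V_2$; this follows purely from the $I_3/J_4/Q_4$-freeness by the same local argument Jones et al.\ use, without reference to a word-representant. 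Granting that, the entire Case 1--3 analysis of Lemma~\ref{lem:posi-1} applies verbatim to $\ell$, so the node labeled $1$ sits in an end-square under $\ell$. By the same token the Proposition~\ref{lem:13} dichotomy holds: the end-square containing $1$ is, up to the symmetries of the grid, labeled either as in Figure~\ref{fig:13} or as in Figure~\ref{fig:32}.

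Next I would upgrade Lemma~\ref{lem:32} and Corollary~\ref{coro:represntant13} to produce, from the \emph{given} good labeling $\ell'$ of $G'$, a word $w$ $12$-representing $G'$ that has the normal form $w=3w_11w_22w_3$ with every letter of $w_1,w_2,w_3$ at least $4$. This is where the inductive hypothesis feeds in: by induction $G'$ with labeling $\ell'$ is $12$-representable by some word, and then the purely syntactic normalization steps of Lemma~\ref{lem:32} (push the unique minimal letter leftmost, keep at most two copies of the second-smallest label, slide letters past it) give the desired form; the relabel-swap trick of Proposition~\ref{lem:13} handles the Figure~\ref{fig:32} case by passing to the word $3w_11w_22w_3$, which $12$-represents the \emph{same} labeled graph once $1$ and $2$ are in the Figure~\ref{fig:13} configuration relative to $G'$ --- note we must be careful that this swap is compatible with the labels already fixed on the rest of $G'$, which it is because only the relation of $1$ and $2$ to the end-square changes. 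Finally, attaching $S_1$ back corresponds to one of the five extensions of Lemma~\ref{lem:extend} (the five cases of Figure~\ref{fig:5cases} being exactly these, up to symmetry and supplement), and the explicit representants $\overline w$ exhibited there $12$-represent $G$ with its given labeling. Applying Observation~\ref{obs:supplement} where needed lets us also handle the Figure~\ref{fig:nn-2}-type end-square.

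The main obstacle, and the part deserving the most care, is the first step: establishing that Lemmas~\ref{lem:cutset} and hence \ref{lem:posi-1}, \ref{lem:32}, and Proposition~\ref{lem:13} all go through with ``$G$ is $12$-representable'' weakened to ``$\ell$ is a good labeling''. Lemma~\ref{lem:cutset} as stated in \cite{JKPR15} has $12$-representability as a hypothesis, but inspection of its proof shows it is really a statement about good labelings (it derives a contradiction with an induced $I_3$, $J_4$ or $Q_4$); I would state and prove this good-labeling version explicitly as a preliminary lemma. A secondary subtlety is bookkeeping the grid symmetries (rotation, reflection, supplement) consistently across all the invoked lemmas so that the normal form $3w_11w_22w_3$ genuinely matches the orientation of $S_2$ in $G$; once the cutset lemma is in place this is routine but must be spelled out. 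Given these, the rest of the argument is a faithful replay of the proof of Theorem~\ref{thm:F=12re} with the word, rather than just its existence, carried along.
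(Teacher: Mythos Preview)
Your overall strategy---induction on the number of squares, with a good-labeling version of the cutset lemma feeding into analogues of Lemmas~\ref{lem:posi-1} and~\ref{lem:32}---is reasonable and partly overlaps with the paper's proof. In particular, your claim that Lemma~\ref{lem:cutset} holds under the good-labeling hypothesis alone is correct (when both components are connected with at least two vertices, an interleaving of labels forces an induced $J_4$ or $Q_4$), and the paper does tacitly use this when it asserts that ``in a good labeling of $G$, $1$ is always the label of a node in an end-square.''

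However, there is a genuine gap in your inductive step. You remove an end-square $S_1$, obtain $G'$ with the inherited labeling $\ell'$, and then want to invoke Lemma~\ref{lem:extend} to reattach $S_1$. But Lemma~\ref{lem:extend} does not merely extend a representant: it \emph{prescribes} the labels on the new square (the old labels shift by $2$ or $3$, and the new vertices receive the smallest labels). There is no reason the given labeling $\ell$ on $S_1$ should match this prescription. Concretely, you never establish that the three smallest labels of $\ell'$ sit on $S_2$ (the end-square of $G'$ adjacent to $S_1$)---they could equally well lie on the \emph{other} end-square of $G'$---nor that the labels $\ell$ places on the vertices of $S_1$ agree with those produced by the five extensions. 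Your appeal to the ``relabel-swap trick'' of Proposition~\ref{lem:13} does not help here: that swap changes the labeling, and the whole point of the theorem is that the labeling is fixed.

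The paper closes this gap by a different device: it strengthens the inductive hypothesis to assert that the representant \emph{begins with the unique occurrence of the third-smallest label}. It then works at the end-square that actually contains the label $1$ under $\ell$, argues that $2$ and $3$ must also lie there, and performs a case analysis (Figures~\ref{fig:3casesgoodlabeling}--\ref{fig:case2-4sub}) on the precise configuration of $1,2,3$ and their neighbours. In each case it removes two or three specifically-labeled vertices, applies the strengthened hypothesis to the remainder, and exhibits an explicit prefix that extends the representant back to $G$. The strengthened hypothesis is exactly what lets the induction close without relabeling; your plan lacks an analogous mechanism.
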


\begin{proof}
We prove, by induction on the number of squares, even a stronger statement
that any such $w$ begins with the only occurrence of the third smallest letter.

The base case is given in  Figure \ref{fig:c4}, and its $12$-representant is $3412$. Now, assume that this theorem holds for
square grid graphs with $n$ squares. We wish to show that it still holds
for $n+1$ squares. Notice that in a good labeling of $G$, $1$ is always  the label
of a node in an end-square. Hence, we need to consider three cases in Figure~\ref{fig:3casesgoodlabeling}, where the labels arrowed to each other can be swapped.

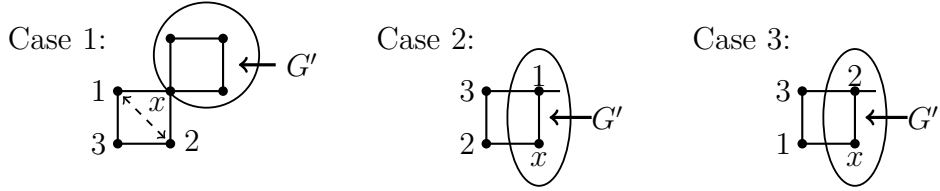
\begin{figure}[!htbp]
\begin{center}
\begin{tikzpicture}[line width=0.7pt,scale=0.7]
\coordinate (O) at (0,0);
\coordinate (B) at (7,0);
\coordinate (C) at (13,0);
\draw[ thick] ([c]O)-- ++(0,1)-- ++(1,0)-- ++(0,-1)--++(-1,0)
++(1,1) -- ++(0,1)-- ++(1,0)-- ++(0,-1)--++(-1,0);
\draw   (O)++(1.685,1.685) circle (1);
\fill[black!100] ([c]O) circle(0.5ex)
     ++(0,1)circle(0.5ex) ++(1,0)circle(0.5ex)
     ++(0,-1)circle(0.5ex)
     ++(0,2)circle(0.5ex) ++(1,0)circle(0.5ex)
     ++(0,-1)circle(0.5ex);
\path (O) node[left] {$3$}
++(0,1) node[left]  {$1$}
++(0.75,-0.25) node {$x$}
++(0.3,-0.7) node[right] {$2$};
\draw [->,very thick] (3,1.5) to (2.3,1.5);
\path (3.5,1.5) node {$G'$};
\draw [<->, dashed]  (0.1,0.9) to (0.9,0.1);
\path ++(0,2) node[left] {Case 1:};
\draw[thick] (B)--++(0,1)--++(1,0)--++(0,-1)--++(-1,0);
\draw[thick] (B)++(1,1)--++(0.4,0);
\fill[black!100] ([c]B) circle(0.5ex) ++(0,1)circle(0.5ex) ++(1,0)circle(0.5ex) ++(0,-1)circle(0.5ex);
\path (B) node[left] {$2$}
++(0,1) node[left] {$3$}
++(1,0.3) node {$1$}
++(0,-1.6) node{$x$};
\draw   (B)++(1,0.5) ellipse (0.6 and 1.3);
\draw [->,very thick]  (9,0.5) to (8.2,0.5);
\path (9.3,0.5) node {$G'$};
\path ++(7,2) node[left] {Case 2:};
\draw[thick] (C)--++(0,1)--++(1,0)--++(0,-1)--++(-1,0);
\draw[thick] (C)++(1,1)--++(0.4,0);
\fill[black!100] ([c]C) circle(0.5ex) ++(0,1)circle(0.5ex) ++(1,0)circle(0.5ex) ++(0,-1)circle(0.5ex);
\path (C) node[left] {$1$}
++(0,1) node[left] {$3$}
++(1,0.3) node {$2$}
++(0,-1.6) node{$x$};
\draw   (C)++(1,0.5) ellipse (0.6 and 1.3);
\draw [->,very thick]  (15,0.5) to (14.2,0.5);
\path (15.3,0.5) node {$G'$};
\path ++(13,2) node[left] {Case 3:};
\end{tikzpicture}
\caption{Three cases  for a gooding labeling square grid graph}
\label{fig:3casesgoodlabeling}
\end{center}
\end{figure}

For Case 1,  let $G$  be the graph  in Figure~\ref{fig:ex67} with exactly two squares. Then,  its $12$-representant is $36712645$, which is what we need to show.

\begin{figure}[!htbp]
\begin{center}
\begin{tikzpicture}[line width=0.7pt,scale=0.7]
\coordinate (O) at (0,0);
\draw[thick] (O)--++(0,1)--++(1,0)--++(0,-1)--++(-1,0)
++(1,1)--++(0,1)--++(1,0)--++(0,-1)--++(-1,0);
\fill[black!100] ([c]O) circle(0.5ex) ++(0,1)circle(0.5ex)
++(1,0)circle(0.5ex) ++(0,-1)circle(0.5ex)++(0,2)circle(0.5ex)
++(1,0)circle(0.5ex) ++(0,-1)circle(0.5ex);
\path (O) node[left] {$3$}
++(0,1) node[left] {$1$}
++(0.75,0.35) node {$7$}
++(0.25,-1.35) node[right] {$2$}
++(0,2.3) node[left] {$4$}
++(1,0) node[right] {$6$}
++(0,-1.3) node[right] {$5$};
\draw [<->, dashed]  (0.1,0.9) to (0.9,0.1);
\draw [<->, dashed]  (1.1,1.9) to (1.9,1.1);
\end{tikzpicture}
\caption{A special subcase in Case 1}
\label{fig:ex67}
\end{center}
\end{figure}
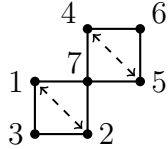

 Otherwise, we claim that $x$ is always the third smallest label in $G'$. Indeed, consider four possible subcases in Figure~\ref{fig:case1-4sub}. Remove the nodes labeled $1,2$ and $3$. Then, by induction hypothesis, $G'$ can be
represented by $w'=6w''$. But then, it can be checked that $G$ is represented by $w=3612w''$.

 \begin{figure}[!htbp]
\begin{center}
\begin{tikzpicture}[line width=0.7pt,scale=0.7]
\coordinate (O) at (0,0);
\coordinate (B) at (4,0);
\coordinate (C) at (9,0);
\coordinate (D) at (13,0);
\draw[ thick] ([c]O)-- ++(0,1)-- ++(1,0)-- ++(0,-1)--++(-1,0)
++(1,1) -- ++(0,1)-- ++(1,0)-- ++(0,-1)--++(-1,0);
\fill[black!100] ([c]O) circle(0.5ex)
     ++(0,1)circle(0.5ex) ++(1,0)circle(0.5ex)
     ++(0,-1)circle(0.5ex)
     ++(0,2)circle(0.5ex) ++(1,0)circle(0.5ex)
     ++(0,-1)circle(0.5ex);
\path (O) node[left] {$3$}
++(0,1) node[left]  {$1$}
++(0.7,-0.3) node {$6$}
++(0.3,-0.7) node[right] {$2$}
++(0,2) node[left] {$4$}
++(1,0) node[right] {$7$}
++(0,-1) node[right] {$5$};
\draw [<->, dashed]  (0.1,0.9) to (0.9,0.1);
\draw [<->, dashed]  (1.1,1.9) to (1.9,1.1);
\draw[thick] (B)--++(0,1)--++(1,0)--++(0,-1)--++(-1,0)
++(1,1)--++(2,0)--++(0,1)--++(-2,0)--++(0,-1)++(1,0)--++(0,1);
\fill[black!100] ([c]B) circle(0.5ex) ++(0,1)circle(0.5ex) ++(1,0)circle(0.5ex) ++(0,-1)circle(0.5ex)++(0,2)circle(0.5ex)
++(1,0)circle(0.5ex)++(1,0)circle(0.5ex)++(0,-1)circle(0.5ex)
++(-1,0)circle(0.5ex);
\path (B) node[left] {$3$}
++(0,1) node[left] {$1$}
++(0.7,-0.3) node {$6$}
++(0.3,-0.7) node[right] {$2$}
++(0,2) node[left] {$4$}
++(1,-1.4) node {$5$};
\draw [<->, dashed]  (5.1,1.9) to (5.9,1.1);
\draw [<->, dashed]  (4.1,0.9) to (4.9,0.1);
\draw   [rotate around={45:(6.7,1.7)}]  (6.7,1.7) ellipse (0.6 and 1.3);
\draw[thick] (C)--++(0,1)--++(1,0)--++(0,-1)--++(-1,0)
++(1,1)--++(0,2)--++(1,0)--++(0,-2)--++(-1,0)++(0,1)--++(1,0);
\fill[black!100] ([c]C) circle(0.5ex) ++(0,1)circle(0.5ex) ++(1,0)circle(0.5ex) ++(0,-1)circle(0.5ex)++(0,2)circle(0.5ex)
++(0,1)circle(0.5ex)++(1,0)circle(0.5ex)++(0,-1)circle(0.5ex)
++(0,-1)circle(0.5ex);
\path (C) node[left] {$3$}
++(0,1) node[left] {$1$}
++(0.7,-0.3) node {$6$}
++(0.3,-0.7) node[right] {$2$}
++(0,2) node[left] {$4$}
++(1,-1.4) node {$5$};
\draw [<->, dashed]  (10.1,1.9) to (10.9,1.1);
\draw [<->, dashed]  (9.1,0.9) to (9.9,0.1);
\draw   [rotate around={45:(10.7,2.7)}] (10.7,2.7) ellipse (0.6 and 1.3);
\draw[thick] (D)--++(0,1)--++(1,0)--++(0,-1)--++(-1,0)
++(1,1)--++(0,1)--++(1,0)--++(0,-1)--++(-1,0)
++(1,1)--++(0,1)--++(1,0)--++(0,-1)--++(-1,0);
\fill[black!100] ([c]D) circle(0.5ex) ++(0,1)circle(0.5ex) ++(1,0)circle(0.5ex) ++(0,-1)circle(0.5ex)++(0,2)circle(0.5ex)
++(1,0)circle(0.5ex) ++(0,-1)circle(0.5ex)
++(0,2)circle(0.5ex)
++(1,0)circle(0.5ex) ++(0,-1)circle(0.5ex);
\path (D) node[left] {$3$}
++(0,1) node[left] {$1$}
++(0.7,-0.3) node {$6$}
++(0.3,-0.7) node[right] {$2$}
++(0,2) node[left] {$4$}
++(1,-1.4) node {$5$};
\draw [<->, dashed]  (14.1,1.9) to (14.9,1.1);
\draw [<->, dashed]  (13.1,0.9) to (13.9,0.1);
\draw   [rotate around={45:(15.7,2.7)}] (15.7,2.7) ellipse (0.6 and 1.3);
\end{tikzpicture}
\caption{Four other  subcases in Case 1}
\label{fig:case1-4sub}
\end{center}
\end{figure}
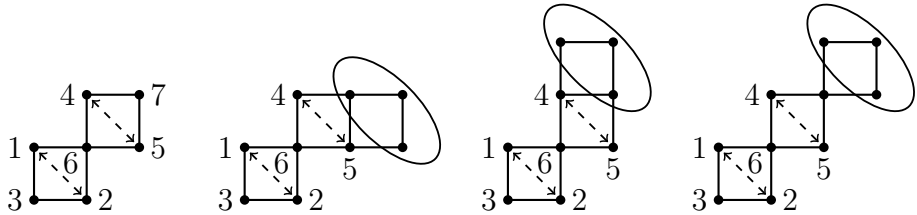

For Case 2, if $x$ is the third smallest label in $G'$, then
$w'=xw''$ represents $G'$ and $3x12w''$ represents $G$.
Now, we suppose  that $x$ is not the third smallest in $G'$. Consider
four possible subcases given in Figure~\ref{fig:case2-4sub}.

\begin{figure}[!htbp]
\begin{center}
\begin{tikzpicture}[line width=0.7pt,scale=0.7]
\coordinate (O) at (0,0);
\coordinate (B) at (4.5,0);
\coordinate (C) at (9,0);
\coordinate (D) at (14,0);
\draw[ thick] ([c]O)-- ++(0,1)-- ++(1,0)-- ++(0,-1)--++(-1,0)
++(1,1) -- ++(0,1)-- ++(1,0)-- ++(0,-1)--++(-1,0);
\fill[black!100] ([c]O) circle(0.5ex)
     ++(0,1)circle(0.5ex) ++(1,0)circle(0.5ex)
     ++(0,-1)circle(0.5ex)
     ++(0,2)circle(0.5ex) ++(1,0)circle(0.5ex)
     ++(0,-1)circle(0.5ex);
\path (O) node[left] {$2$}
++(0,1) node[left]  {$3$}
++(0.75,-0.25) node {$1$}
++(0.3,-0.7) node[right] {$4$};
\path ++(0,2) node[left] {$(1)$};
\draw [<->, dashed]  (0.1,0.9) to (0.9,0.1);
\draw  [rotate around={45:(1.7,1.7)}] (1.7,1.7) ellipse (0.6 and 1.3);
\draw[ thick] ([c]B)-- ++(0,1)-- ++(1,0)-- ++(0,-1)--++(-1,0)
++(1,0) -- ++(0,1)-- ++(1,0)-- ++(0,-1)--++(-1,0);
\fill[black!100] ([c]B) circle(0.5ex)
     ++(0,1)circle(0.5ex) ++(1,0)circle(0.5ex)
     ++(0,-1)circle(0.5ex)
     ++(1,0)circle(0.5ex) ++(0,1)circle(0.5ex);
\path (B) node[below] {$2$}
++(0,1) node[above] {$3$}
++(1,0) node[above] {$1$}
++(1,0) node[above] {$5$}
++(0,-1) node[below] {$4$}
++(-1,0) node[below] {$6$};
\path (B)++(0,2) node[left] {$(2)$};
\draw[ thick] ([c]D)-- ++(0,1)-- ++(1,0)-- ++(0,-1)--++(-1,0)
++(1,0) -- ++(0,1)-- ++(1,0)-- ++(0,-1)--++(-1,0)
++(1,0) -- ++(1,0)-- ++(0,-1)-- ++(-1,0)--++(0,1);
\fill[black!100] ([c]D) circle(0.5ex)
     ++(0,1)circle(0.5ex) ++(1,0)circle(0.5ex)
     ++(0,-1)circle(0.5ex)
     ++(1,0)circle(0.5ex)++(0,1)circle(0.5ex)
     ++(0,-2)circle(0.5ex) ++(1,0)circle(0.5ex)
     ++(0,1)circle(0.5ex);
\path (D) node[below] {$2$}
++(0,1) node[above]  {$3$}
++(1,0) node[above] {$1$}
++(1,0) node[above] {$5$}
++(-1,-1) node [below] {$6$}
++(1.2,0.3) node {$4$};
\path (D)++(0,2) node[left] {$(4)$};
\draw  [rotate around={135:(16.7,-0.7)}] (16.7,-0.7) ellipse (0.6 and 1.3);
\draw[ thick] ([c]C)-- ++(0,1)-- ++(1,0)-- ++(0,-1)--++(-1,0)
++(1,0) -- ++(0,1)-- ++(1,0)-- ++(0,-1)--++(-1,0)
 -- ++(0,-1)-- ++(1,0)-- ++(0,1);
\fill[black!100] ([c]C) circle(0.5ex)
     ++(0,1)circle(0.5ex) ++(1,0)circle(0.5ex)
     ++(0,-1)circle(0.5ex)
     ++(1,0)circle(0.5ex)++(0,1)circle(0.5ex)
     ++(0,-2)circle(0.5ex) ++(-1,0)circle(0.5ex)
     ++(0,1)circle(0.5ex);
\path (C) node[below] {$2$}
++(0,1) node[above]  {$3$}
++(1,0) node[above] {$1$}
++(1,0) node[above] {$5$}
++(-1.2,-1) node [below] {$x$}
++(1.5,0) node {$4$};
\path (C)++(0,2) node[left] {$(3)$};
\draw  [rotate around={135:(10.7,-0.7)}] (10.7,-0.7) ellipse (0.6 and 1.3);
\end{tikzpicture}
\caption{Four other subcases in Case 2}
\label{fig:case2-4sub}
\end{center}
\end{figure}
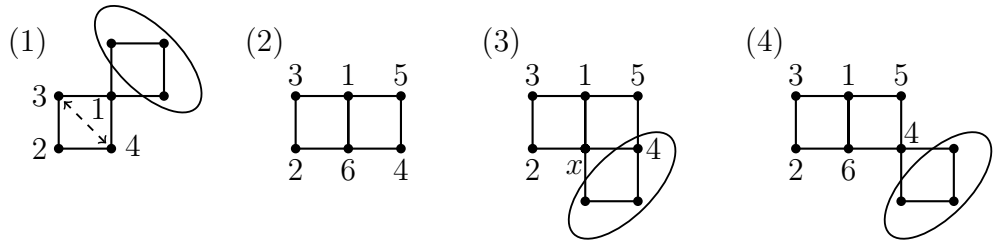

For Subcase (1), remove the nodes labeled by $2,3,4$, and by induction
hypothesis, the left graph $G'$ is represented by some word $w'$. Let $w$ be the word obtained from $w'$ by
appending $342$ to $w'$ to the left, and
 inserting $2$ directly after the leftmost $1$ in $w'$. Clearly, $w$ ensures that $3$ and $4$ are connected with $1$ and $2$ only, and $2$ is disconnected from any other node. Moreover, the third smallest
 letter $3$ occurs once at the first place of $w$.

 For Subcase (2), clearly, its $12$-representant is $w=35625124$.

 For Subcase (3), remove the nodes labeled $2$ and $3$ and let the obtained graph  be $G'$. By the induction hypothesis, assume that
 the $12$-representant of $G'$ is $w'$. Notice that  the node  labeled
 by $x$ is the only node connected to both $1$ and $4$, except the node labeled by $5$.
Then, by Corollary~\ref{coro:represntant13}, we may assume that
the $12$-representant of $G'$ is  $w'=5x1w_24w_3$
with each letter in $w_2$ and $w_3$ being larger than $5$.
Let $w=3 5 x 2 5 1 2 w_2 4 w_3$. It can be checked readily
that $w$ $12$-represents $G$ given in Subcase $(3)$.
The same construction is still valid for Subcase $(4)$, and hence,
we omit the details.

For Case 3, we swap the labels $1$ and $2$ in $G$,
then we are led to a new graph $G''$, which is just the graph
given in case $2$. It should be mentioned that
the labeling after swaping is still good, because $1$ and $2$
are indistinguishable with respect to the other elements.
Then, following from the result of case $2$, we see that
there is alway a $12$-representant for $G''$, which implies that
$G''$ is $12$-representable. By the proof of Proposition~\ref{lem:13}, we see that there is a  $12$-representant for $G$.\end{proof}

\begin{remark}\label{a-nice-open-question}
{\em We have proved that the existence of a good labeling for
a square grid graph is equivalent to the  square grid graph being $12$-representable.
Moreover, we have shown that any good labeling of a square grid graph can be
used to find a word $12$-representing the graph. It is an interesting open question if such a property holds for any other graph. Namely, is the existence of a good labeling in a graph equivalent to the graph being $12$-representable? If so, then can any good labeling be turned into a word-representant?}
\end{remark}

\begin{deff} A {\em corner node} in a $12$-representable square grid graph is a node that belongs to exactly one square, which is a non-end square and shares edges with two other squares.
\end{deff}

For example, in the leftmost graph in Figure~\ref{fig:1one}, the node labeled by 1 is a corner node, but the nodes labeled by $x$, $y$ and $z$ are not.

\begin{thm}\label{thm:corner-repeated}
Let $G$ be  a $12$-representable square grid graph. Then, there
exists  a word-representant $w$ of $G$, such that in $w$,
\begin{itemize}
\item the label of each corner node is repeated twice (for no good labeling it can appear only once in $w$);
\item between the two copies of the label of a corner node there are exactly two other letters, and
\item the label of any other (non-corner) node  appears exactly once.
\end{itemize}
\end{thm}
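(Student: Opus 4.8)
The plan is to combine the structural results already developed in this section with an induction on the number of squares, mirroring the proof of Theorem~\ref{thm:anygoodlabeling-12}. First I would recall, via Corollary~\ref{coro:represntant13} and Lemma~\ref{lem:extend}, that a $12$-representable square grid graph $G$ always admits a word-representant built up by successively extending an end-square in one of the five ways of Figure~\ref{fig:extend1}. The key observation is that in each of those five extension rules, exactly one new letter is introduced that ends up repeated (and with precisely two letters between its two copies), and one can check directly from the explicit representants $\overline{w}$ listed in the proof of Lemma~\ref{lem:extend} that the repeated letter in each case corresponds exactly to a node that has just become, or will become, a corner node. So the strategy is: start from the base graph of Figure~\ref{fig:c4} with representant $3412$ (one square, no corner nodes, every label appearing once), and track how corner nodes are created as squares are appended.

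Next I would make the inductive step precise. Take a $12$-representable square grid graph $G$ with $n+1$ squares; by Lemma~\ref{lem:oneendsquare} it has an end-square $S_1$, and by Corollary~\ref{coro:13nn-2} (after possibly passing to the supplement, which by Observation~\ref{obs:supplement} preserves $12$-representability and only reverses/complements the word, hence preserves the "appears once / appears twice with two letters between" structure) we may assume $S_1$ is labeled as in Figure~\ref{fig:13}. Let $G' = G \setminus S_1$, which is $F$-avoiding, hence $12$-representable by Theorem~\ref{thm:F=12re}, and by induction has a representant $w'$ of the required form. Now I would examine the five ways (Figure~\ref{fig:5cases}, or equivalently the inverse of the five extensions of Figure~\ref{fig:extend1}) in which $S_1$ attaches to $G'$, and in each case use the corresponding construction from Lemma~\ref{lem:extend} to produce $w$ from $w'$. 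The point to verify in each case is twofold: (a) any node of $G'$ that was a corner node of $G'$ remains a corner node of $G$ and keeps its twice-with-two-letters-between property — this holds because the extension formulas only prepend short blocks and insert already-present letters, never disturbing the gap between the two copies of an old repeated letter; and (b) a node of $G'$ that was \emph{not} a corner node but \emph{becomes} a corner node of $G$ (necessarily one of the two nodes shared between $S_1$ and the rest) has, in the new word $w$, exactly two copies with exactly two letters in between — this is exactly what the explicit $\overline{w}$'s exhibit (e.g. in Case~2, $\overline{w}=3\,5\,w_1'\,1\,5\,2\,w_2'\,4\,w_3'$, the letter $5$ now has the two letters $w_1'$-then-nothing... — more carefully, the two $5$'s bracket the block $w_1' 1$, which after the relabeling is arranged to be length two, etc.).

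For the converse-flavored parenthetical clauses I would argue directly from the earlier lemmas: a corner node belongs to three squares in the induced-subgraph sense of the configuration in the leftmost picture of Figure~\ref{fig:1one}, so by Lemma~\ref{lem:posi-1} it cannot carry the label $1$; more is true — if its label appeared only once in $w$, then deleting all other occurrences issues would force, via the cutset argument of Lemma~\ref{lem:cutset} applied to the two "arms" $A$ and $B$ meeting at the corner node (exactly as in Case~1 of the proof of Lemma~\ref{lem:posi-1}), the contradictory inequalities $A > B$ and $B > A$; hence the corner label must appear twice. That "at least twice" combined with Theorem~\ref{at-most-twice-occur} ("at most twice") pins it to exactly twice, and then the cutset inequalities force the two letters strictly between the copies to be precisely the two neighbors on the two arms — giving the "exactly two letters between" clause. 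The non-corner nodes appearing exactly once is then a matter of checking that the inductive construction never introduces a third repeated letter, which again is immediate from the five explicit formulas.

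The main obstacle I anticipate is bookkeeping in the inductive step: one must check, for each of the five attachment patterns and for \emph{each} of the (up to two) newly-created corner nodes, that the insertion prescribed by Lemma~\ref{lem:extend} leaves exactly two letters between the relevant pair of equal letters and does not secretly widen or narrow the gap for a previously existing corner letter. A subtle point is that passing to the supplement in order to normalize $S_1$ to the Figure~\ref{fig:13} form reverses the word, so one must confirm that "two letters strictly between the two copies" is a reversal-invariant and complement-invariant condition (it is, trivially), and that the set of corner nodes is unchanged under supplementation (it is, since it is a purely graph-theoretic notion). Once these invariances are noted, the verification is routine but must be done for all five cases; I would present it as a short case analysis referring back to the displayed representants in the proof of Lemma~\ref{lem:extend}.
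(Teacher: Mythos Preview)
Your overall strategy---induction on the number of squares, driven by the five extensions of Lemma~\ref{lem:extend}---is exactly the paper's approach. But two concrete points in your write-up are wrong or incomplete, and the second one is the heart of the proof.

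First, it is not true that ``in each of those five extension rules, exactly one new letter is introduced that ends up repeated''. Inspect the representants in Lemma~\ref{lem:extend}: only Cases~1 and~2 produce a repeated letter (the $4$ in $\overline{w}=3\,5\,1\,w_1'\,4\,w_2'\,2\,4\,w_3'$ and the $5$ in $\overline{w}=3\,5\,w_1'\,1\,5\,2\,w_2'\,4\,w_3'$). Cases~3,~4,~5 introduce no repetition at all---and correspondingly, these are precisely the extensions that create \emph{no} new corner node. So the case split is simpler than you think, but you need to say it correctly.

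Second, and more importantly, your handling of the ``exactly two letters between'' clause is where the real content lies, and you do not have it. You write that ``the two $5$'s bracket the block $w_1'\,1$, which after the relabeling is arranged to be length two''---but nothing in the relabeling \emph{arranges} this; the length of $w_1'$ is inherited from the inductive representant and is not under your control. The actual argument (which the paper gives) is an adjacency count: in Case~1, any letter $x$ in $w_2'$ satisfies $x\geq 6$, is adjacent to $2$ (since $x$ precedes the only $2$), and is non-adjacent to each of $1,3,4,5$ (check the positions); there is exactly one node in the extended graph with this adjacency profile, so $|w_2'|=1$. Similarly in Case~2 one shows $|w_1'|=1$. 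This is the step you are missing, and without it the ``exactly two letters between'' claim is unproved. The paper, incidentally, takes as its base case the three-square L-shape (the smallest graph actually containing a corner node), with representant $351748246$, rather than the single square; either base works.

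Your separate cutset argument for the parenthetical (``for no good labeling can a corner label appear only once'') is reasonable, but note that the paper's proof does not address this parenthetical at all---it only establishes the existence of the particular $w$.
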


\begin{proof}
We wish to prove this by induction on the number of squares. The base case is given in Figure \ref{fig:corner} and its $12$-representant is
$w=351748246$.
\begin{figure}[!htbp]
\begin{center}
\begin{tikzpicture}[line width=0.7pt,scale=0.7]
\coordinate (O) at (0,0);
\draw [thick] (O)--++(0,2)--++(1,0)--++(0,-1)--++(-1,0)++(0,-1)--++(2,0)
--++(0,1)--++(-1,0)--++(0,-1);
\fill[black!100] (O) circle(0.5ex) ++(0,1) circle(0.5ex)
 ++(0,1) circle(0.5ex) ++(1,0) circle(0.5ex) ++(0,-1) circle(0.5ex)
  ++(0,-1) circle(0.5ex) ++(1,0) circle(0.5ex)
  ++(0,1)circle(0.5ex);
\path (O) node[left] {$4$}
++(0,1) node[left] {$5$}
++(0,1) node[left] {$1$}
++(1,0) node[above] {$3$}
++(0,-2) node[below] {$7$}
++(1,0) node[below] {$6$}
++(0,1) node[above] {$8$};
\path (0.7,1.3) node {$2$};
\end{tikzpicture}
\caption{The base case in the proof of Theorem \ref{thm:corner-repeated}}
\label{fig:corner}
\end{center}
\end{figure}
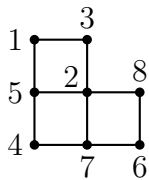

 Now, assuming that the theorem holds for square grid graphs with $n$ squares, we proceed to show that it still holds for $n+1$ squares.
This can be done by going through the cases given in Lemma~\ref{lem:extend}. In the generation of square grid graphs,
only Cases 1 and 2 will bring new corner nodes.
In Case 1, $4$ is labeled at the corner
and repeats in the $12$-representant $\overline{w}=3 5 1 w_1' 4 w_2' 2 4 w_3'$, with letters in $w_1', w_2'$ and $w_3'$ larger than $5$.
Notice that the elements in $w_2' $ are
connected with $2$, but not connected with $1,3,4,5$.
Hence, actually, there is exactly one element in $w_2'$.
This completes the proof of this case.

For the second case, $5$ is labeled at the corner
and repeats in the $12$-representant $\overline{w}=3 5  w_1' 1 5 2 w_2'  4 w_3'$, with letters in $w_1', w_2'$ and $w_3'$ larger than $5$.
Notice that the elements in $w_1' $ are
connected with $1,2,4$, but not connected with $3,5$.
Hence, there is exactly one element in $w_1'$.
\end{proof}

\section{$12$-representability of  line grid graphs} \label{sec:generalgrid}

In this section, we study the $12$-representability of
induced subgraphs of a grid graph with ``lines'' (i.e.\ edges not belonging to any square), which we call line grid graphs. Unfortunately, we cannot give a characterization in this case (we can only conjecture it; see Conjecture~\ref{conj:linegrid2}). Still, we give a number of results on $12$-representability of line grid graphs that should be useful in achieving the desired characterization.

\begin{deff}\label{def:suit}
Given a  line grid graph $G$, we call a node $v$ in $G$ {\em $k$-suitable} if
there is a way to attach an induced
path to $v$ of length $k$ that would result in a line grid graph. We refer to such induced paths as lines in what follows for brevity.
\end{deff}

\begin{prop}\label{lem:addline2}
Let $G$ be a $12$-representable square grid graph, and $G'$ is obtained from $G$ by attaching a line of length $k$, $k \geq 2$, to a $k$-suitable node in $G$. If $G'$ is $12$-representable then the line must be attached
to a node of an end-square in $G$.
\end{prop}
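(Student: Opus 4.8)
The plan is to argue by contradiction: suppose $G'$ is $12$-representable but the line is attached to a node $v$ that does \emph{not} lie on an end-square of $G$. Since $G'$ is $12$-representable it is $F$-avoiding by Lemma~\ref{lem:F-avoid}, and so is its induced subgraph $G$; hence, by Lemma~\ref{lem:oneendsquare}, $G$ has an end-square, and in fact by Lemma~\ref{lem:twoendsquare} it has exactly two end-squares (assuming $G$ has more than $4$ nodes, which we may, treating the tiny cases separately). The key structural fact to extract is that the node $v$, lying on no end-square, is separated from "most of $G$" by a small cutset, while the attached line of length $k\ge 2$ contributes a second large component — and this forces contradictory inequalities via Lemma~\ref{lem:cutset}, exactly in the style of the proof of Lemma~\ref{lem:posi-1}.

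Concretely, I would first classify the position of $v$ in $G$ by how many squares contain it: one, two, or three (at most three by $F$-avoidance). This mirrors Cases~1--3 of Lemma~\ref{lem:posi-1} and the schematic pictures in Figures~\ref{fig:1one}--\ref{fig:1three}. In each configuration, since $v$ is not on an end-square, the two "sides" of $G$ obtained by deleting the obvious $2$-element cutset $\{p,q\}$ through $v$ (the pair of neighbours of $v$ perpendicular to the direction toward each side) each have at least two vertices — indeed at least $4$ or $5$, just as counted in Lemma~\ref{lem:posi-1}. Now attaching a path $P$ of length $k\ge2$ at $v$ adds $P$ (which together with $v$ has $\ge 3$ vertices, so $P$ alone has $\ge 2$) as yet another piece hanging off $v$. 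The point is: for \emph{each} of the two cutsets $\{p,q\}$ and $\{q,r\}$ (say) that one would use in the Lemma~\ref{lem:posi-1} argument, the line $P$ falls into one of the two resulting components, and one still gets the two incompatible orderings "$A>B$" and "$B>A$" on the large non-line components of $G$, because $P$ being attached only at $v$ does not change which side $A$ and $B$ land on relative to those cutsets. So the original contradiction of Lemma~\ref{lem:posi-1} survives verbatim; the presence of the line is harmless to that argument. Hence no such $v$ can exist, i.e.\ the line must be attached at a node of an end-square.

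One subtlety I would be careful about: Lemma~\ref{lem:posi-1} concerns the node labeled $1$, whereas here $v$ is an arbitrary attachment node and need not carry the smallest label. But Lemma~\ref{lem:cutset} only needs a $2$-element cutset separating two components each of size $\ge 2$, and then it pins down the relative order of the two components according to where the \emph{global} minimum sits — and crucially, the two cutsets $\{p,q\}$ and $\{q,r\}$ through $v$ produce the \emph{same} partition of $\{$large-component-of-$G\}\sqcup\{$other-large-component-of-$G\}\sqcup P$ into "near side" and "far side" up to which piece $P$ joins, so whichever side contains the overall minimum, Lemma~\ref{lem:cutset} forces one order for cutset $\{p,q\}$ and the opposite order for cutset $\{q,r\}$ on the two big $G$-pieces. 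That is the contradiction. So I do \emph{not} need to know where $1$ is; I just need both cutsets available, which the "$v$ not on an end-square" hypothesis guarantees exactly as in Lemma~\ref{lem:posi-1}.

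The main obstacle I anticipate is bookkeeping the finitely many local shapes of "$v$ not on an end-square" and verifying in each that (a) both perpendicular $2$-cutsets through $v$ exist and each separates $G$ into two parts of size $\ge 2$, and (b) the attached line $P$ lands on a definite side for each cutset so that the Lemma~\ref{lem:cutset} inequalities genuinely collide rather than merely constraining $P$. This is routine case analysis — essentially re-reading Figures~\ref{fig:1one}, \ref{fig:1two}, \ref{fig:1three} with "ovals of size $\ge 2$" and an extra pendant path — so I would present it compactly by saying "by the same cutset choices as in the proof of Lemma~\ref{lem:posi-1}, now applied with the line $P$ absorbed into the appropriate component," and then note the two contradictory orderings. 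The only genuinely new ingredient beyond Lemma~\ref{lem:posi-1} is the observation that $|P|\ge 2$ when $k\ge 2$, which is what keeps all the relevant components at size $\ge 2$ and makes Lemma~\ref{lem:cutset} applicable; this is where the hypothesis $k\ge 2$ is used.
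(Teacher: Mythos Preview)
Your proposal has a genuine gap in the paragraph beginning ``One subtlety\ldots''. You assert that for the two local cutsets $\{p,q\}$ and $\{q,r\}$ near $v$, Lemma~\ref{lem:cutset} forces \emph{opposite} orderings on the two large $G$-pieces $A$ and $B$ ``whichever side contains the overall minimum''. This is false. Suppose the global label $1$ lies deep in $A$, away from $v$. Removing $\{p,q\}$ separates $A$ from $(\{v\}\cup P\cup B)$; since $1\in A$, Lemma~\ref{lem:cutset} gives $A<B$. Removing $\{q,r\}$ separates $(\{v\}\cup P\cup A)$ from $B$; since $1$ is still on the $A$-side, Lemma~\ref{lem:cutset} again gives $A<B$. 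No contradiction. The reason the Lemma~\ref{lem:posi-1} argument works is precisely that the node labeled $1$ is $v$ itself, so it switches components when you switch cutsets; here $v$ need not carry $1$, and you cannot simply wish that issue away.

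The paper's proof handles this by doing what you explicitly try to avoid: a case split on the location of $1$. The cutsets used are \emph{global}, not local to $v$: namely the boundaries $\{x_2,x_3\}$ and $\{x_4,x_5\}$ of the two end-squares of $G$ (which exist by Lemma~\ref{lem:twoendsquare}), together with the singleton cutset $\{x_1\}=\{v\}$ that separates the attached line $P$ from all of $G$. If $1$ lies on $P$, the two end-square cutsets give $\{c,d\}>\{e,f\}$ and $\{c,d\}<\{e,f\}$. If $1$ lies in the part of $G$ near one end-square (say near $c,d$), then removing $\{v\}$ gives $P>\{e,f\}$, while removing the far end-square boundary $\{x_4,x_5\}$ gives $\{e,f\}>P$; contradiction. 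It is in this last step that $k\ge 2$ is genuinely used, since one needs $|P|\ge 2$ to apply Lemma~\ref{lem:cutset} with $P$ itself as one of the components. Your local-cutset plan never isolates $P$ as a component on its own, and that is exactly the missing idea.
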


 \begin{figure}[!htbp]
\begin{center}
\begin{tikzpicture}[line width=0.7pt,scale=0.7]
\coordinate (O) at (0,0);
\draw ([c]O) circle (1.5);
\draw [thick] (0,1.5)--++(0,0.4);
\draw [dotted,thick] (0,1.9)--++(0,0.8);
\draw [thick] (0,2.8)--++(0,0.4);
\path (0,1.5) node[below] {$x_1$};
\path (0,2.8) node[right]{$a$}
++(0,0.4) node[right]{$b$};
\fill[black!100] (0,1.5) circle(0.5ex) ++(0,0.4) circle(0.5ex)
  ++(0,0.8)circle(0.5ex) ++(0,0.4) circle(0.5ex);
\draw [thick] (-1.07,-1.07)--++(-0.66,-0.45)--++(-0.53,0.87)--++(0.76,0.42);
\draw [thick] (1.07,-1.07)--++(0.66,-0.45)--++(0.53,0.87)--++(-0.76,0.42);
\fill[black!100] (-1.07,-1.07) circle(0.5ex)++(-0.66,-0.45)circle(0.5ex)
++(-0.53,0.87)circle(0.5ex) ++(0.76,0.42)circle(0.5ex);
\fill[black!100] (1.07,-1.07) circle(0.5ex)++(0.66,-0.45)circle(0.5ex)
++(0.53,0.87)circle(0.5ex) ++(-0.76,0.42)circle(0.5ex);
\path (0.7,-0.85) node {$x_3$} ++(0.4,0.75) node {$x_2$}
++(1.4,-0.7) node {$c$} ++(-0.55,-0.87) node {$d$};
\path (-0.7,-0.85) node {$x_4$} ++(-0.4,0.75) node {$x_5$}
++(-1.5,-0.7) node {$f$} ++(0.55,-0.87) node {$e$};
\end{tikzpicture}
\caption{A line grid graph with a line of length $k$, $k \geq 2$, attached to a $k$-suitable node not belonging to an end-square}
\label{fig:gluenonend}
\end{center}
\end{figure}
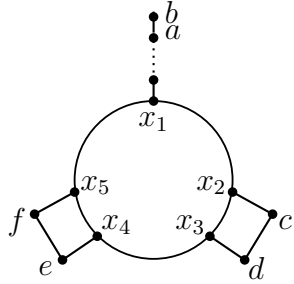

\begin{proof}
We show that a line of length $k$, $k \geq 2$, cannot be attached to a node  of a non-end-square. Indeed, assume to the contrary, that this was the case, as illustrated schematically in Figure~\ref{fig:gluenonend}. Clearly, $x_i$, $i=1,\ldots,5$, are distinct. If $1$ is the label of a node on the line, then viewing $\{x_2,x_3\}$ as a cutset,
 we have $\{c,d\}>\{e,f\}$. On the other hand, viewing $\{x_4,x_5\}$ as a cutset,
we have $\{c,d\}<\{e,f\}$, a contradiction. If $1\in \{x_2,x_3,c,d\}$, then
viewing $\{x_1\}$ as a cutset, we deduce that $\{a,b\}>\{e,f\}$. However, viewing $\{x_4,x_5\}$ as a cutset, we deduce that $\{a,b\}<\{e,f\}$,
a contradiction. Similarly, we can prove that
$1\not\in\{x_4,x_5,e,f\}$. Summarising the cases, and combining with Lemma~\ref{lem:posi-1}, we see that there is
no place for the label $1$, as desired.
\end{proof}

We believe that in Proposition~\ref{lem:addline2} the ``if then'' statement can be replaced by an ``if and only if'' statement, but we were not able to prove it.

\begin{prop}\label{thm:lineaddline}
Let $G$ be the line grid graph obtained by adding a line $A$ to a node $x$ on another line $B$  as shown in Figure~\ref{fig:lineaddline}. If $G$ is $12$-representable then the length of $A$
is at most $2$, or the smallest distance between $x$ and
an endpoint of $B$ is at most $2$.
\end{prop}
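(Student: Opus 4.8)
The plan is to mimic the cutset argument already used in Proposition~\ref{lem:addline2} and in Lemma~\ref{lem:twoendsquare}. Write $B = v_0 v_1 \cdots v_m$ for the line $B$, with the attaching node $x = v_i$ somewhere strictly inside $B$ (if $x$ were an endpoint of $B$, then the smallest distance from $x$ to an endpoint is $0 \le 2$ and there is nothing to prove), and let $A = x\, a_1 a_2 \cdots a_\ell$ be the added line of length $\ell$. Suppose, for contradiction, that $G$ is $12$-representable, that $\ell \ge 3$, and that $x$ is at distance at least $3$ from both endpoints of $B$; so both arms $v_0 \cdots v_{i-1}$ and $v_{i+1}\cdots v_m$ of $B$ contain at least three nodes, and $A$ contributes at least three further nodes $a_1,a_2,a_3$. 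First I would record which pairs of nodes form cutsets of $G$: the single node $x=v_i$ is a cutset whose removal splits $G$ into the three pieces $L = \{v_0,\ldots,v_{i-1}\}$, $R=\{v_{i+1},\ldots,v_m\}$, and $P=\{a_1,\ldots,a_\ell\}$; likewise $\{v_{i-1}\}$, $\{v_{i+1}\}$, and $\{a_1\}$ are each cutsets of $G$ isolating smaller chunks.

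Next I would locate the label $1$. By Lemma~\ref{lem:cutset}, applied with the cutset $\{x\}$, whichever two of the three components $L,R,P$ do not contain $1$ must be totally comparable, and moreover the component containing $1$ contains the global minimum; I would split into the cases "$1\in L$", "$1\in R$", "$1\in P$", and also the cases where $1$ is one of the cutset nodes themselves. In each case the trick (exactly as in Proposition~\ref{lem:addline2}) is to produce two cutsets giving contradictory orderings of two fixed pieces. For instance, if $1$ lies on $A$ (say $1 \in P$, or $1 = a_1$), then using the cutset $\{v_{i-1}\}$ we get $\{v_0,\ldots,v_{i-2}\} < \{v_{i+1},\ldots,v_m\}$ while using the cutset $\{v_{i+1}\}$ we get the reverse inequality between the same two sets (both sets being nonempty and of size $\ge 2$ because the arms have length $\ge 3$), a contradiction. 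If instead $1$ lies on one arm of $B$, say in $L$, then the cutset $\{v_{i+1}\}$ forces $L' < P'$ where $L' , P'$ are the non-$1$ pieces, whereas the cutset $\{a_1\}$ forces $P'' < \{v_0,\ldots\}$-type inequality in the opposite direction; in every branch one gets two incomparable orderings of a fixed pair of sufficiently large pieces. Combining all branches shows the label $1$ has nowhere to go, contradicting $12$-representability. Hence $\ell \le 2$ or the distance from $x$ to an endpoint of $B$ is $\le 2$.

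The bookkeeping point that needs care, and the step I expect to be the main obstacle, is making sure that in each placement of $1$ the two cutsets I invoke really do cut off pieces of size at least $2$ on both sides, since Lemma~\ref{lem:cutset} requires $|V_1|,|V_2|\ge 2$; this is exactly why the hypothesis ``$\ell\ge 3$'' and ``distance $\ge 3$ to each endpoint'' (rather than $\ge 2$) is the right threshold, and one must check the small cases $\ell=3$ and distance exactly $3$ by hand to see that the relevant pieces just barely have two nodes. A secondary subtlety is that $x$ being an interior node of $B$ means $G\setminus\{x\}$ has \emph{three} components, so one should be slightly careful about which two components a given application of Lemma~\ref{lem:cutset} is comparing (it compares two components of $G\setminus U$, so one picks the two that are relevant and ignores the third). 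Modulo these checks, the argument is a routine amplification of the cutset technique already in the paper.
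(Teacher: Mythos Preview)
Your approach is correct and is essentially identical to the paper's: both arguments locate the label $1$ on one of the three branches and then use two of the three single-vertex cutsets $\{p\}=\{v_{i-1}\}$, $\{q\}=\{v_{i+1}\}$, $\{r\}=\{a_1\}$ at the neighbours of the branch point to obtain contradictory orderings from Lemma~\ref{lem:cutset}, with the threshold ``length $\ge 3$'' on each branch needed precisely so that the severed pieces have at least two nodes. One small slip to fix in your write-up: when $1\in P$ and you remove $v_{i-1}$, the label $1$ sits in the component containing $R$, so Lemma~\ref{lem:cutset} actually yields $\{v_{i+1},\dots,v_m\}<\{v_0,\dots,v_{i-2}\}$, the reverse of what you wrote (and the second cutset then gives $\{v_0,\dots,v_{i-1}\}<\{v_{i+2},\dots,v_m\}$, not literally the ``same two sets''); the contradiction of course survives, but the directions and the off-by-one on the endpoints should be stated correctly.
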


\begin{figure}[!htbp]
\begin{center}
\begin{tikzpicture}[line width=0.7pt,scale=0.7]
\coordinate (O) at (0,0);
\draw [thick] (O)--++(1,0)++(1,0)--++(2,0)++(1,0)--++(1,0);
\draw [thick] (O)++(3,0)--++(0,1)++(0,1)--++(0,1);
\draw [dotted, thick] (O)++(3,1)--++(0,1);
\draw [dotted, thick] (O)++(1,0)--++(1,0);
\draw [dotted, thick] (O)++(4,0)--++(1,0);
\fill[black!100] (O) circle(0.5ex) ++(1,0) circle(0.5ex)
 ++(1,0) circle(0.5ex) ++(1,0) circle(0.5ex) ++(1,0) circle(0.5ex)
 ++(1,0) circle(0.5ex) ++(1,0) circle(0.5ex)++(-3,1) circle(0.5ex)
 ++(0,1) circle(0.5ex)++(0,1) circle(0.5ex);
\path (O)  node[below] {$i$}
 ++(2,0) node[below] {$p$}
 ++(1,0) node[below] {$x$}
 ++(1,0) node[below] {$q$}
 ++(-1,1) node[right] {$r$}
 ++(3,-1)node[below] {$j$} ;
\draw [->, very thick] (3.9,2.5) to (3.2,2.5);
\path (4.2,2.5) node  {$A$};
\draw [->, very thick] (5.5,0.9) to (5.5,0.2);
\path (5.5,1.2) node  {$B$};
\end{tikzpicture}
\caption{A line grid graph with a line of length $k$, $k \geq 2$, not attached to a node of an end-square}
\label{fig:lineaddline}
\end{center}
\end{figure}
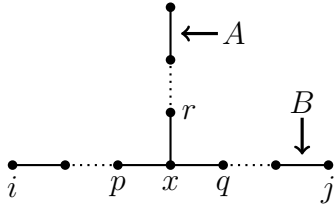

\begin{proof}
Assume that  the part of line $B$ between $i$ and $x$ (resp., $x$ and $j$)
 is called $C$ (resp., $D$). We wish to prove that if all of the lengths of $A$, $C$ and $D$ are larger than $2$, then $G$ is not $12$-representable.
We prove this by showing that there is no good labeling for $G$
in this case. If $1$ is the label of a node on $A$, then
choosing $p$ as a cutset, we see that the nodes on $C$, except for $p$ and $x$, are larger than those on $D$.  Choosing $q$ as a cutset, we see that the nodes in $D$, except for $q$ and $x$, are larger than those on $C$, which is a
contradiction. Hence, $1$ cannot be the label of a node on $A$. By a similar analysis, we can also show that $1$ cannot be the label of a node on $C$ or $D$, as desired.\end{proof}

Note that the graph in Figure~\ref{fig:lineaddline} is a subdivision of the claw $K_{1,3}$. Denote a graph of this form with three branches of length $s,t,p$ by $B(s,t,p)$ and observe that Proposition~\ref{thm:lineaddline} proves that $B(3,3,3)$, the 3-subdivision of $K_{1,3}$ presented in Figure~\ref{fig:B333}, is not 12-representable. Once again, we believe that in Proposition~\ref{thm:lineaddline} the ``if then'' statement can be replaced by an ``if and only if'' statement, but we were not able to prove it.

\begin{figure}[!htbp]
\begin{center}
\begin{tikzpicture}[line width=0.7pt,scale=0.7]
\coordinate (O) at (0,0);
\draw [thick] (O)--++(1,0)++(1,0)--++(2,0)++(1,0)--++(1,0);
\draw [thick] (O)++(3,0)--++(0,1)++(0,1)--++(0,1);
\draw [thick] (O)++(3,1)--++(0,1);
\draw [thick] (O)++(1,0)--++(1,0);
\draw [thick] (O)++(4,0)--++(1,0);
\fill[black!100] (O) circle(0.5ex) ++(1,0) circle(0.5ex)
 ++(1,0) circle(0.5ex) ++(1,0) circle(0.5ex) ++(1,0) circle(0.5ex)
 ++(1,0) circle(0.5ex) ++(1,0) circle(0.5ex)++(-3,1) circle(0.5ex)
 ++(0,1) circle(0.5ex)++(0,1) circle(0.5ex);
\end{tikzpicture}
\caption{The non-12-representable graph $B(3,3,3)$}
\label{fig:B333}
\end{center}
\end{figure}
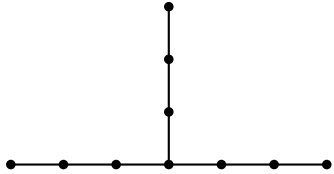

\begin{thm}\label{thm:addoneline}
Given a $12$-representable line grid graph $G$,
 $G'$  is obtained by gluing a line of length $1$ to a node of $G$ which is not a corner node and is $1$-suitable.  Then, $G'$ is $12$-representable.
\end{thm}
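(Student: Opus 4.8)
The plan is to obtain a $12$-representant of $G'$ directly from one of $G$ by inserting a single new letter, chosen larger than every letter of $G$, immediately before the first occurrence of $v$; the geometric hypotheses on $v$ will be used only to massage a representant of $G$ into a convenient normal form.

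First I would fix, by Theorem~\ref{at-most-twice-occur}, a $12$-representant $w$ of $G$ in which each letter appears at most twice. The crucial preliminary step is to show that, after relabeling $G$ (and, where convenient, passing to the supplement by Observation~\ref{obs:supplement}), we may take $w$ to be in the normal form in which \emph{the first occurrence of $v$ is preceded by an occurrence of every other letter of $G$} --- informally, ``$v$ starts last.'' Note that any such $w$ automatically forces $v$ to lie below all of its neighbours in the chosen labeling (an edge $uv$ with $u<v$ would produce $u\cdots v$, an occurrence of $12$ in $w_{\{u,v\}}$), so producing $w$ amounts to exhibiting a good labeling of $G$ of that shape together with a compatible word. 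That this is possible should follow from $v$ being $1$-suitable and not a corner node: these hypotheses place $v$ at a boundary location of the grid, away from the rigidly constrained corner nodes (cf.\ Theorem~\ref{thm:corner-repeated}), which is exactly what lets one push $v$'s first occurrence to the end. I expect this step to require a case analysis according to the local shape of $G$ near $v$ (interior of a line, end of a line, or on a square of the square-grid part of $G$), drawing on the canonical-form results for grid-graph representants such as Corollary~\ref{coro:represntant13} and the positioning arguments of Section~\ref{sec:square}. This is the main obstacle of the proof.

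Granting the normal form, let $z$ be a new letter whose value exceeds every letter of $G$, and let $w'$ be obtained from $w$ by inserting one copy of $z$ immediately before the first occurrence of $v$. I would then check that $w'$ $12$-represents $G'$. Indeed $A(w')=V(G)\cup\{z\}=V(G')$; for any two old vertices $x,y$ we have $w'_{\{x,y\}}=w_{\{x,y\}}$, so all old (non)adjacencies persist; for the pair $\{v,z\}$ the word $w'_{\{v,z\}}$ begins with $z$ followed only by copies of $v$, and since $z>v$ it reduces to a strictly decreasing word containing no $12$, so $vz\in E(G')$; and for any old $u\neq v$, the normal form provides an occurrence of $u$ before $z$, so $w'_{\{u,z\}}$ contains a factor $u\cdots z$ with $u<z$, i.e.\ an occurrence of $12$, whence $uz\notin E(G')$. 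Thus in the graph $12$-represented by $w'$ the new vertex $z$ is adjacent precisely to $v$, and that graph is exactly $G'$; hence $G'$ is $12$-representable.

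Finally, I would record the symmetric option obtained by applying the supplement throughout: a dual normal form in which $v$'s last occurrence comes first, handled by inserting a new \emph{smallest} letter immediately after that occurrence. Keeping both normal forms available makes the case analysis in the preliminary step more flexible, since for each local configuration of $v$ one may use whichever of the two is easier to realize.
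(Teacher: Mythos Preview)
Your plan diverges from the paper's and leaves the crucial step open.

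The paper never attempts your ``$v$ starts last'' normal form. Instead it invokes Theorem~\ref{thm:corner-repeated} to choose a labeling and a representant $w$ of $G$ in which every non-corner label occurs \emph{exactly once}; this is precisely where the hypothesis ``$v$ is not a corner node'' is cashed in. Since the labeling is good, $I_3$-avoidance forces every vertex, in particular $v=i$, to be either a local maximum or a local minimum among its neighbours. If $i$ is a local maximum, the single occurrence of $i$ in $w$ forces every $j>i$ to have at least one occurrence \emph{after} $i$ (otherwise $w_{\{i,j\}}$ would avoid $12$, making $j>i$ a neighbour of the local maximum $i$). One then shifts all labels $\ge i$ up by one so that the new leaf receives label $i$, replaces the unique (now) $i{+}1$ in $w$ by the block $(i{+}1)\,i$, and appends a second copy of $i$ at the very end. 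The first inserted $i$ sees an occurrence of every larger letter to its right, the terminal $i$ sees an occurrence of every smaller letter to its left, and the block supplies the edge to $i{+}1$; hence the new $i$ is adjacent only to $i{+}1$. The local-minimum case is handled dually, by prepending the new letter instead of appending it.

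Your normal form---that every other letter occurs before the first occurrence of $v$---is strictly stronger than what the paper needs, and you offer no proof of it, only a hope that case analysis together with Corollary~\ref{coro:represntant13} and ``the positioning arguments of Section~\ref{sec:square}'' will suffice. Those results, however, concern end-squares and very specific canonical labelings, not an arbitrary $1$-suitable non-corner vertex, and nothing there produces a word in which all letters start before $v$. Your single-letter insertion is indeed correct \emph{given} the normal form, but the entire content of the theorem then resides in that unproved step. The paper's two-copy insertion works under the far milder premise ``$i$ occurs exactly once and is a local extremum,'' which follows directly from Theorem~\ref{thm:corner-repeated} together with the non-corner assumption---and that is the idea your sketch is missing.
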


\begin{proof}
First, we claim that each node in $G$ is either a local maximum or a local minimum,  and moreover,  a local maximum must connect to a local minimum. Otherwise, there will occur an induced subgraph equal to
$I_3$, which contradicts to the fact that $G$ is $12$-representable.

By Theorem \ref{thm:corner-repeated}, we may find a labeling of $G$ with $w$ being its $12$-representant, where each non-corner label occurs
only once. Assume that the node we glue to is $i$.
Now, we consider two cases as follows.

\noindent
{\bf Case 1.} $i$ is a local maximum. Labeling the node new added $i$ and increasing each label in $G$
   not less than $i$ by $1$, we obtain a labeling of $G'$.
   Let $w'$ be the word obtained from $w$ by increasing each letter in
   $w$ larger than $i$ by $1$, changing $i$ to $(i+1)i$ and insering
   an $i$ at the end of the word.

   We claim that $w'$  $12$-represents $G'$.  Since $i$ is a local maximum of $G$,  all letters larger than $i+1$ will occur after $(i+1)i$ in $w'$. This implies that $i$ is disconnected with all letters larger than $i+1$. The $i$ at the end of $w'$ kills all smaller letters. Lastly, $i$ and $i+1$ are connected. The claim is
   verified.

\noindent
{\bf Case 2.} $i$ is a local minimum. Labeling the node new added $i+1$ and increasing each label in $G$
   larger than $i$ by $1$, we obtain a labeling of $G'$.
   Let $w'$ be the word obtained from $w$ by increasing each letter in
   $w$ larger than $i$ by $1$, then changing $i$ to $(i+1)i$ and lastly inserting an $i+1$ at the begining of the word.
   By a similar analysis as given in case $1$, we may see that $w'$  $12$-represents $G'$.

Summarizing, in each case, we find a $12$-representant for $G'$.
Hence, $G'$ is $12$-representable and we complete the proof.
\end{proof}

\begin{prop}\label{lem:corneradd1}
Assume that $G'$ is obtained by gluing  a line of length $1$ to a corner node  which is $1$-suitable in $G$ , then $G'$ is not $12$-representable.
\end{prop}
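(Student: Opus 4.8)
The plan is to reduce the statement to the non‑$12$‑representability of the graph $X$ of Figure~\ref{fig:corneradd1} (already established inside the proof of Lemma~\ref{lem:F-avoid}) by showing that $G'$ must contain an induced copy of $X$. The whole point is that the definition of a corner node forces the local structure around it.

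First I would make the local picture explicit. Let $v$ be the corner node carrying the new pendant node $v'$; since $v$ is $1$‑suitable and a pendant edge lies in no square, $v'$ is adjacent to $v$ and to no other vertex of $G$. By definition $v$ lies on a unique square $S$, which shares edges with two other squares $S_1$ and $S_2$. Because $v$ lies on no square other than $S$, neither edge of $S$ incident to $v$ can be a common edge with another square — such a square would contain $v$ — so the two shared edges are exactly the two edges of $S$ not incident to $v$; these meet at the vertex $f$ of $S$ opposite $v$, and $S_1\neq S_2$ since no unit square shares two perpendicular edges with $S$. Thus $S,S_1,S_2$ form an L‑tromino with common inner vertex $f$. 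Normalising $S=\{(0,0),(1,0),(1,1),(0,1)\}$ with $v=(0,1)$ (so $f=(1,0)$), the eight vertices of $S\cup S_1\cup S_2$ sit at the forced positions $(0,1),(1,1),(2,1),(2,0),(1,0),(0,0),(0,-1),(1,-1)$.

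Second I would identify the induced subgraph. Running through all grid‑adjacencies among those eight positions shows that the only edges present are the ten edges of $S$, $S_1$ and $S_2$, so the subgraph of $G$ (hence of $G'$) induced by the eight vertices is precisely the L‑tromino. Adding $v'$, which is joined only to $v$, the subgraph of $G'$ induced by these nine vertices is isomorphic to $X$ (with $v'$, $v$, and $f$ playing the roles of the pendant vertex $a$, its neighbour $b$, and the degree‑$4$ vertex $f$ of Figure~\ref{fig:corneradd1}); indeed this is exactly what $X$ is, in the smallest case. By Lemma~\ref{lem:F-avoid}, $G'$ is therefore not $12$‑representable.

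I do not anticipate a real obstacle; the only points needing care are the short finite checks that the two shared edges of $S$ must avoid $v$ and that no extra edges arise among the eight vertices — both are immediate from the geometry of unit squares in the grid. Should the induced‑subgraph identification feel too terse, one could instead rerun the ``no good labeling'' cutset argument used in the proof of Lemma~\ref{lem:F-avoid} on these nine vertices, but invoking Lemma~\ref{lem:F-avoid} directly is cleaner.
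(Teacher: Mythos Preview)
Your proposal is correct and follows exactly the paper's approach: the paper's proof is the single sentence ``The statement follows from non-$12$-representability of the graph $X$ in Figure~\ref{fig:corneradd1},'' and you have simply spelled out why $G'$ contains $X$ as an induced subgraph before invoking Lemma~\ref{lem:F-avoid} (via Observation~\ref{obs:induced}). One minor point: your claim that $v'$ is adjacent to no other vertex of $G$ is slightly stronger than what you actually need and use---it suffices (and is immediate from the grid positions you list) that $v'$ is adjacent to none of the other seven vertices of $S\cup S_1\cup S_2$, which is all the induced-subgraph identification requires.
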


\begin{proof}
The statement follows from non-$12$-representability of the graph $X$ in Figure~\ref{fig:corneradd1}.
\end{proof}

%
%

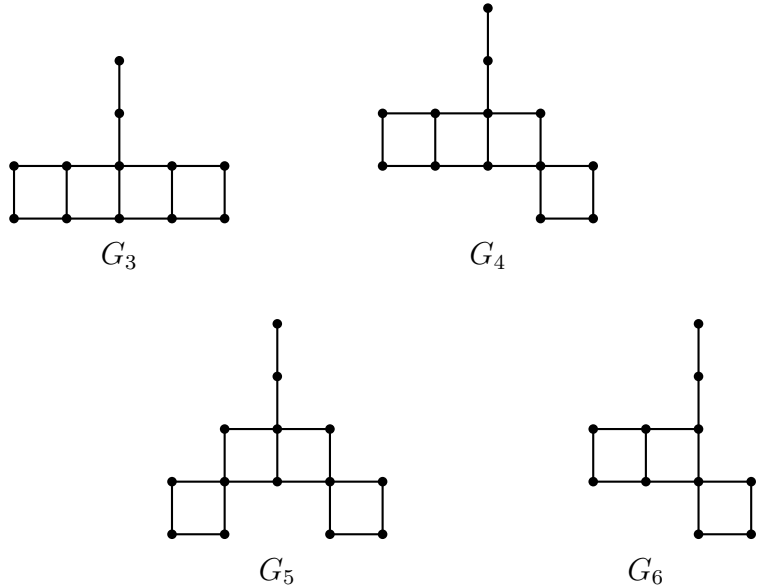
\begin{figure}[!htbp]
\begin{center}
\begin{tikzpicture}[line width=0.7pt,scale=0.7]
\coordinate (O) at (0,0);
\coordinate (B) at (3,-6);
\draw [thick] (O)--++(4,0)--++(0,1)--++(-4,0)--++(0,-1)
++(1,0)--++(0,1)++(2,0)--++(0,-1)++(-1,0)--++(0,3);

\draw [thick] (O)++(7,1)--++(4,0)--++(0,-1)--++(-1,0)--++(0,2)
--++(-3,0)--++(0,-1)++(1,0)--++(0,1)++(1,-1)--++(0,3);


\fill[black!100] (O) circle(0.5ex) ++(1,0) circle(0.5ex)
 ++(1,0) circle(0.5ex)++(1,0) circle(0.5ex)++(1,0) circle(0.5ex)
 ++(0,1) circle(0.5ex)++(-1,0) circle(0.5ex)++(-1,0) circle(0.5ex)
 ++(-1,0) circle(0.5ex)++(-1,0) circle(0.5ex)++(2,1) circle(0.5ex)
 ++(0,1) circle(0.5ex);

\fill[black!100] (O)++(7,1) circle(0.5ex) ++(1,0) circle(0.5ex)
 ++(1,0) circle(0.5ex)++(1,0) circle(0.5ex)++(1,0) circle(0.5ex)
 ++(0,-1) circle(0.5ex)++(-1,0) circle(0.5ex)++(0,2) circle(0.5ex)
 ++(-1,0) circle(0.5ex)++(-1,0) circle(0.5ex)++(-1,0) circle(0.5ex)++(2,1) circle(0.5ex)
 ++(0,1) circle(0.5ex);


 \path (O)++(2,-0.7) node {$G_3$};
 \path (O)++(9,-0.7) node {$G_4$};

\draw [thick] (B)--++(1,0)--++(0,2)--++(2,0)--++(0,-2)
--++(1,0)--++(0,1)--++(-4,0)--++(0,-1)++(2,1)--++(0,3);

\draw [thick] (B)++(8,1)--++(2,0)--++(0,3)++(0,-2)--++(-2,0)--++(0,-1)
++(1,0)--++(0,1)++(1,-1)--++(1,0)--++(0,-1)--++(-1,0)--++(0,1);

\fill[black!100] (B) circle(0.5ex) ++(1,0) circle(0.5ex)
 ++(0,1) circle(0.5ex)++(0,1) circle(0.5ex)
 ++(1,0) circle(0.5ex)++(1,0) circle(0.5ex)++(0,-1) circle(0.5ex)
 ++(-1,0) circle(0.5ex)++(1,-1) circle(0.5ex)++(1,0) circle(0.5ex)++(0,1) circle(0.5ex)
 ++(-4,0) circle(0.5ex)++(2,2) circle(0.5ex)++(0,1) circle(0.5ex);

\fill[black!100] (B)++(8,1) circle(0.5ex) ++(1,0) circle(0.5ex)
 ++(1,0) circle(0.5ex)++(0,1) circle(0.5ex)
 ++(-1,0) circle(0.5ex)++(-1,0) circle(0.5ex)++(2,1) circle(0.5ex)
 ++(0,1) circle(0.5ex)++(0,-4) circle(0.5ex)++(1,0) circle(0.5ex)
 ++(0,1) circle(0.5ex);

 \path (B)++(2,-0.7) node {$G_5$};
 \path (B)++(9,-0.7) node {$G_6$};

 \end{tikzpicture}
\caption{The non-$12$-representable graphs  $G_i$, $i \in \{3,4,5,6\}$; non-$12$-representability of these graphs follows from Proposition~\ref{lem:addline2}}
\label{fig:G_3to7}
\end{center}
\end{figure}

We end up this paper with the following conjecture.

\begin{conj}\label{conj:linegrid2}
A line grid graph is $12$-representable if and only if it is $P$-avoiding,
where $P:=F \cup \{B(3,3,3),G_3,G_4,G_5,G_6\}$ with $B(3,3,3)$ and $G_i, i\in \{3,4,5,6\}$, given in Figures~\ref{fig:B333} and~\ref{fig:G_3to7}, respectively.
 \end{conj}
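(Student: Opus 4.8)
The plan is to prove both implications, with essentially all the work in the ``if'' direction. For the ``only if'' direction: if a line grid graph $G$ is $12$-representable, then by Observation~\ref{obs:induced} every induced subgraph of $G$ is $12$-representable, so $G$ cannot contain any member of $P$ as an induced subgraph. Indeed, $G$ contains no member of $F$ by Lemma~\ref{lem:F-avoid}; $G$ contains no $B(3,3,3)$ because Proposition~\ref{thm:lineaddline} shows $B(3,3,3)$ is not $12$-representable; and $G$ contains none of $G_3,G_4,G_5,G_6$, since each of these is a $12$-representable square grid graph to which a line of length at least $2$ has been glued at a non-end-square node, hence is not $12$-representable by Proposition~\ref{lem:addline2}. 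So $G$ is $P$-avoiding, and the content of the conjecture is the converse.

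For the ``if'' direction I would argue by induction on $|V(G)|$ that every $P$-avoiding line grid graph is $12$-representable, distinguishing the case where $G$ has a square from the case where it does not. If $G$ has no square, then it has no induced $4$-cycle (an induced $C_4$ of a grid graph is a square) and, being $P$-avoiding, no induced $C_{2n}$ for $n\ge 4$; since a grid graph has no induced $C_6$ and is bipartite, $G$ is acyclic, i.e., a tree. Being $P$-avoiding, $G$ has no $B(3,3,3)$ subtree, which is exactly the condition that $G$ be a double caterpillar, so $G$ is $12$-representable by the characterization of $12$-representable trees in~\cite{JKPR15}. If $G$ has at least one square, let $H$ be the union of all squares of $G$; then $H$ is a $P$-avoiding, hence $F$-avoiding, square grid graph, so $H$ is $12$-representable by Theorem~\ref{thm:F=12re} and, if $H$ has more than four nodes, has exactly two end-squares by Lemma~\ref{lem:twoendsquare}. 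Using Propositions~\ref{lem:addline2},~\ref{thm:lineaddline} and~\ref{lem:corneradd1} one shows that a $P$-avoiding $G$ is obtained from $H$ by attaching ``line structures'': a line of length $1$ may hang from any non-corner node of $H$, whereas every line of length at least $2$ and every further branching emanates from one of the two end-squares of $H$, and, by $B(3,3,3)$-freeness, each maximal attached tree of lines is itself a double caterpillar whose spine starts at $H$. One then reconstructs a representant of $G$: start from a representant of $H$ in the special form $3w_1 1 w_2 2 w_3$ at one of the identified end-squares (Corollary~\ref{coro:represntant13}), add the length-$1$ lines at non-corner nodes via Theorem~\ref{thm:addoneline}, and attach the remaining (longer, branching) line structures at the end-squares by inserting their labels into the slots $w_1,w_2,w_3$ in a pattern generalizing the five constructions of Lemma~\ref{lem:extend} and the local-maximum/local-minimum alternation in the proof of Theorem~\ref{thm:addoneline}, arranged so that no appended letter ever forms a $12$-pattern with a letter outside its own line.

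The hard part, and the reason this is only a conjecture, is to make the structural description of the previous paragraph exact and then verify the reconstruction. First, one must prove the converses of Propositions~\ref{lem:addline2} and~\ref{thm:lineaddline}: that $G_3,\dots,G_6$ and $B(3,3,3)$ capture \emph{all} the obstructions, which means producing an explicit $12$-representant for \emph{every} admissible attachment of a line of length $\ge 2$ at an end-square and for every admissible branching, and checking, across a sizeable case analysis, that no $12$-pattern is created between the new line and a distant part of $G$. Second, and more delicate, one must handle several long lines attached at the (at most two) end-squares simultaneously, where the orders in which the different lines' letters are interleaved inside $w_1$, $w_2$, and $w_3$ must be chosen so that all local constraints hold at once. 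I expect this second point --- the global coordination of the interleavings together with the global $12$-pattern checks --- to be the main obstacle; the tree case and the single pendant lines are, by contrast, already essentially covered by the results established above.
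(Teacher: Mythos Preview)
The statement is presented in the paper as a \emph{conjecture}, not a theorem; the paper offers no proof and explicitly leaves it open. So there is nothing to compare your attempt against.

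That said, your proposal is an accurate reading of where matters stand. The ``only if'' direction you give is precisely what the paper itself establishes piecemeal: Lemma~\ref{lem:F-avoid} rules out the members of $F$; Proposition~\ref{thm:lineaddline} rules out $B(3,3,3)$; and the caption to Figure~\ref{fig:G_3to7} records that non-$12$-representability of $G_3,\ldots,G_6$ follows from Proposition~\ref{lem:addline2}, exactly as you argue. Your outline for the ``if'' direction is a reasonable plan of attack, and your identification of the obstacles --- proving the converses of Propositions~\ref{lem:addline2} and~\ref{thm:lineaddline}, and coordinating several long lines attached at the end-squares --- matches the paper's own remarks that these converses are believed but unproved. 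One small point to flag in your tree case: the equivalence ``$B(3,3,3)$-free tree $\Leftrightarrow$ double caterpillar'' is plausible but is not stated in the paper and would itself need a short argument; everything else in your sketch is consistent with, and in the spirit of, the partial results the paper provides.
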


\section*{Acknowledgement}
The authors are grateful to the referee for providing many useful comments that helped to improve the presentation of the paper. In particular, Conjecture~\ref{conj:linegrid2} was modified based on referee's comments. The first author was supported by Natural Science Foundation Project of Tianjin Municipal Education Committee (No.~2017KJ243,~No.~2018KJ193)
and the National Natural Science Foundation of China (No.~11701420).

\end{document}